\documentclass{amsart}

\usepackage{amsmath,amsthm,amssymb}
\usepackage{algorithmic}
\usepackage{algorithm}
\usepackage{setspace}
\usepackage{latexsym}
\usepackage{color}
\usepackage{comment}
\usepackage{graphicx}
\usepackage{cite}

\newtheorem{theorem}{Theorem}[section]
\newtheorem{lemma}[theorem]{Lemma}
 
\newtheorem{proposition}[theorem]{Proposition}
\newtheorem{observation}[theorem]{Observation}
\newtheorem{corollary}[theorem]{Corollary}

\theoremstyle{definition}
\newtheorem{definition}[theorem]{Definition}

\newtheorem{cclaim}[theorem]{Claim}

\newtheorem*{acnt}{Acknowledgments and Notes}

\theoremstyle{remark}
\newtheorem{remark}[theorem]{Remark}

\newcommand{\zero}{forwarding~}

\newcommand{\Vg}{V(G)}

\newcommand{\pardm}[1]{\prec_{#1}}
\newcommand{\yield}{\triangleleft}
\newcommand{\gpart}[1]{\mathcal{P}(#1)}
\newcommand{\pargpart}[2]{\mathcal{P}_{#1}(#2)}
\newcommand{\gsim}[1]{\sim_{#1}}

\newcommand{\upstar}[1]{\mathcal{U}^{*}(#1)}
\newcommand{\parup}[2]{\mathcal{U}_{#1}(#2)}
\newcommand{\parupstar}[2]{\mathcal{U}^{*}_{#1}(#2)}
\newcommand{\vup}[1]{U(#1)}
\newcommand{\vupstar}[1]{U^*(#1)}
\newcommand{\vparup}[2]{U_{#1}(#2)}
\newcommand{\vparupstar}[2]{U^*_{#1}(#2)}

\newcommand{\what}{\hat}

\newcommand{\parcondxp}[5]{\Psi(#1, #2; #3, #4, #5)}

\newcommand{\pathfamily}[6]{\Lambda(#1; #2, #3; #4)}
\renewcommand{\Gamma}{N}

\newcommand{\parNei}[2]{\Gamma_{#1}(#2)}
\newcommand{\comp}[1]{\mathcal{G}(#1)}

\newcommand{\earint}[1]{int(#1)}
\newcommand{\const}[1]{\mathcal{G}^+(#1)}
\newcommand{\inconst}[1]{\mathcal{G^{-}}(#1)}
\newcommand{\cut}[1]{\delta(#1)}
\newcommand{\parcut}[2]{\delta_{#1}(#2)}
\newcommand{\deficiency}[1]{\mathrm{def}(#1)}

\makeatletter
\renewenvironment{proof}[1][\proofname]{\par
  \normalfont
  \topsep6\p@\@plus6\p@ \trivlist
  \item[\hskip\labelsep{\bfseries #1}\@addpunct{\bfseries.}]\ignorespaces
}{%
  \endtrivlist}

\makeatother

\makeatletter
\def\BOXSYMBOL{\RIfM@\bgroup\else$\bgroup\aftergroup$\fi
  \vcenter{\hrule\hbox{\vrule height.85em\kern.6em\vrule}\hrule}\egroup}
\makeatother
\newcommand{\BOX}{%
  \ifmmode\else\leavevmode\unskip\penalty9999\hbox{}\nobreak\hfill\fi
  \quad\hbox{\BOXSYMBOL}}
\renewcommand\qed{\BOX}
\newenvironment{rmenum}{\begin{enumerate}%
\renewcommand{\labelenumi}{\theenumi}%
\renewcommand{\labelenumi}{{\rm \theenumi}}%
\renewcommand{\theenumi}{(\roman{enumi})}%
}{\end{enumerate}}

\numberwithin{equation}{section}

\begin{document}

\title[Canonical Decomposition]{New Canonical Decomposition in Matching Theory}%

\author{Nanao Kita}
\address{National Institute of Informatics\\
2-1-2 Hitotsubashi, Chiyoda-ku, Tokyo, Japan 101-8430}
\email{kita@nii.ac.jp}

\subjclass[2010]{Primary 05C70, Secondary 68R05, 68R10}

\keywords{matchings, matching theory, canonical decompositions}

\date{\today}

\begin{abstract}
In matching theory, one of the most fundamental and classical branches of combinatorics,  
{\em canonical decompositions} of graphs are powerful and versatile  tools that form the basis of this theory.
However, the abilities of the known canonical decompositions, that is, the  {\em Dulmage-Mendelsohn}, {\em Kotzig-Lov\'asz}, 
and {\em Gallai-Edmonds} decompositions,  are limited  because they are only applicable  to particular classes of graphs, such as  bipartite graphs, 
or they are  too sparse to provide sufficient information. 
To overcome these limitations, 
we  introduce a new canonical decomposition that  is applicable to all graphs 
and  provides much finer information. 
We focus on the notion of {\em factor-components} 
as the fundamental building blocks of a graph; 
through the factor-components, 
our new canonical decomposition states 
how a graph is organized and how it contains all the maximum matchings. 
The main results that constitute our new theory are the following: 
(i)  
a canonical partial order over the set of factor-components, 
which describes 
how a graph is constructed from its factor-components; 
(ii)
a generalization of the Kotzig-Lov\'asz decomposition, 
which shows the inner structure of each factor-component 
in the context of the entire graph; and  
(iii) 
a canonically described interrelationship between (i) and (ii), 
which integrates these two results into a unified theory of a canonical decomposition. 
These results are obtained in a self-contained way, 
and our proof of the generalized Kotzig-Lov\'asz decomposition contains 
a shortened and self-contained proof of the classical counterpart.

\end{abstract}

\maketitle

\section{Introduction} \label{sec:intro}
This paper introduces a new  canonical decomposition in matching theory. 
In this section, we give a brief explanation of our results. 

Matching theory~\cite{lp1986} is one of the most classical and fundamental fields in combinatorics. 
Given a graph, a {\em matching} is a set of edges
 in which no two are adjacent. 
As small matchings such as a singleton  exist trivially by definition, 
 {\em maximum} matchings typically attracts great interest. 
As can be seen from the definition, 
a matching is a basic way to express pairings of elements, 
and therefore has been intensively studied not only in graph theory~\cite{DBLP:books/daglib/0030488}  
but  also in algebra~\cite{duff1986direct, DBLP:journals/combinatorica/SzegedyS06, DBLP:conf/fct/Lovasz79, geelen2000, lp1986}. 

The role of matching theory in combinatorial optimization is especially important. 
In the  decades since 1965, 
the remarkable growth of combinatorial optimization 
 has been driven by {\em polyhedral combinatorics}~\cite{grotschel2012geometric, schrijver2003}, 
which explores  a systematic and unified approach 
to numerous types of combinatorial problems through linear programming theory. 
The maximum matching problem serves as an archetypal prototype in polyhedral combinatorics~\cite{lp1986, schrijver2003}. 
Therefore,  progress in the theory of matchings 
leads to benefits for the entire field of combinatorial optimization. 

{\em Canonical decompositions} are highly versatile tools that form the foundation of matching theory~\cite{lp1986}. 
A type of structure theorems exist that 
define a uniquely determined partition of a graph,  and then 
use this partition to state the matching theoretic properties of the graph.  
A canonical decomposition is a way to understand graphs that is naturally derived from one of these structure theorems.

The known canonical decompositions 
are the following: the {\em Dulmage-Mendelsohn decomposition}~\cite{dm1958,dm1959,dm1963}, 
 {\em Kotzig-Lov\'asz decomposition}~\cite{kotzig1959a, kotzig1959b, kotzig1960, lovasz1972structure}, 
and  {\em Gallai-Edmonds decomposition}~\cite{gallai1964, edmonds1965}. 
The power  of each canonical decomposition 
 originates partly from its uniqueness for a given graph. 
Therefore, in matching theory, 
the adjective ``canonical'' has come to mean being unique for a given graph, 
and being canonical itself has been considered  important.

However, 
we sometimes encounter problems 
that cannot be solved successfully with these canonical decompositions 
because 
they are applicable to only  particular classes of graphs 
or do not provide sufficient information. 
The  Dulmage-Mendelsohn  and  Kotzig-Lov\'asz decompositions 
target  bipartite graphs and  {\em consistently factor-connected graphs}, respectively. 
The  Gallai-Edmonds decomposition, by definition, 
targets all graphs, 
 but tends to be too sparse  and, therefore, some classes of graphs, such as  {\em factorizable graphs}, fall into trivially irreducible cases, which is a limitation that cannot be disregarded. 

To address these limitations, 
in this paper, we establish the {\em basilica decomposition}, 
which is a new canonical decomposition
that is applicable to all graphs  
and provides much finer information than the Gallai-Edmonds decomposition. 
We derive this new canonical decomposition 
using the notion of {\em factor-components}, 
which serve as  the fundamental building blocks that constitute a graph
 when studying matchings.  
The properties of the maximum matchings are captured by describing both 
how an entire given graph is constructed by factor-components 
and  the  inner structure of each factor-component. 
More precisely, 
the main results that constitute the new canonical decomposition 
are the following: 
\begin{rmenum} 
\item \label{item:intro:order} 
The organization of a given graph in terms of its factor-components can be  
understood as a partially ordered structure. 
The set of factor-components forms a poset 
with respect to a certain canonical binary relation, 
which is similar to the Dulmage-Mendelsohn decomposition. 
\item \label{item:intro:sim} 
A generalization of  the Kotzig-Lov\'asz decomposition is provided that targets general graphs, 
which  describes the inner structure of each factor-component in the context of the entire given graph.   
\item \label{item:intro:cor} 
Although   \ref{item:intro:order} and \ref{item:intro:sim} are established independently, 
they have a certain canonical relationship that enables us to understand a graph 
as an architectural building-like structure in which these ideas are unified naturally. 
The integrated notion obtained from this relationship is our  new canonical decomposition.   
\end{rmenum}

Regarding our proofs, 
we obtain this new canonical decomposition 
without using any known results; 
thus, it is purely self-contained. 
Additionally, 
the proof that establishes the generalization of the Kotzig-Lov\'asz decomposition
contains a greatly shortened and purely self-contained proof for the classical Kotzig-Lov\'asz decomposition.

Considering the important role of canonical decompositions, 
we believe that our results will contribute 
to further development in  combinatorics. 
In fact, several consequential results have been already obtained~\cite{DBLP:conf/cocoa/Kita13, kita2012canonical, kita2014alternative, kita2015graph}. 

The remainder of this paper is organized as follows. 
In Section~\ref{sec:def}, we present preliminary definitions and lemmas. 
In Section~\ref{sec:canonical}, we explain more about the technical background  
of the canonical decompositions 
and what we aim to establish in this paper. 
In Section~\ref{sec:props}, we list some elementary well-known lemmas used in later sections, 
 with self-contained proofs.  
The  new results of this paper  appear in  Section~\ref{sec:nonpositive} onward. 
In Section~\ref{sec:nonpositive}, we provide a statement about {\em consistently factor-connected graphs} that is used in later sections. 
The main theorems that establish the basilica decomposition are then presented;  
we present  \ref{item:intro:order},   \ref{item:intro:sim}, and 
\ref{item:intro:cor} in Sections~\ref{sec:order}, \ref{sec:part},  and \ref{sec:cor}, respectively. 

In Section~\ref{sec:pertinentprops}, we present some  properties of the basilica decomposition. 
In Section~\ref{sec:alg}, we propose a polynomial time algorithm for computing the basilica decomposition.  
Finally, in Section~\ref{sec:conclusion},  we conclude this paper.

\section{Definitions}\label{sec:def}  
\subsection{General Statements} 
For standard definitions and notation for sets, graphs, and algorithms, 
we mostly follow  Schrijver~\cite{schrijver2003}.
In the following, we list those that may be non-standard or exceptional. 
We denote the vertex set of a graph $G$ by $V(G)$ 
and the edge set by $E(G)$. 
We treat paths and circuits as graphs; 
that is, a path is a connected graph in which every vertex is of degree two or less 
and at least one vertex is of degree less than two, 
whereas a circuit is a connected graph in which every vertex is of degree two. 
Given a path $P$ and vertices $x,y\in V(P)$, 
$xPy$ denotes the subpath of $P$ whose ends are $x$ and $y$. 
We sometimes regard a graph as its vertex set. 
As usual, a singleton $\{x\}$ is sometimes denoted simply by $x$. 
In the remainder of this section, unless otherwise stated, 
let $G$ be a graph and let $X\subseteq V(G)$. 
\subsection{Operations on Graphs} 

The subgraph of $G$ induced by $X$ is denoted by $G[X]$, 
and $G[\Vg\setminus X]$ is denoted by $G-X$. 

We denote by $G/X$ the contraction of $G$ by $X$. 
That is, $V(G/X) = V(G)\setminus X \cup \{x\}$, where $x\not\in V(G)$, 
and $E(G/X) = E(G) \setminus E(G[X]) \setminus \parcut{G}{X} \cup S$, 
where $S$ is obtained by replacing each edge $uv\in \parcut{G}{X}$ with $u\in X$ and $v\not\in X$ 
by $xv$. 
Let  $\what{G}$ be a supergraph of $G$, 
and let $F\subseteq E(\hat{G})$. 
We denote by $G+F$ and $G-F$ 
the graphs obtained by adding $F$ to $G$ 
and deleting $F$ from $G$ without removing any vertices, respectively.

The union of two subgraphs $G_1$ and $G_2$ of $G$ 
is denoted by $G_1 + G_2$.

For simplicity, 
regarding these operations of creating a new graph  from given graphs,  
we identify the vertices, edges, and subgraphs 
of the newly created graph with 
those of old graphs to which they naturally correspond.

\subsection{Functions on Graphs} 
A {\em neighbor} of $X$ is a vertex in $V(G)\setminus X$ 
that is joined to a vertex in $X$. 
The set of neighbors of $X$ is denoted by $\parNei{G}{X}$. 
Given $Y, Z\subseteq V(G)$, 
 $E_{G}[Y, Z]$ denotes the set of edges joining $Y$ and $Z$, 
and $\delta_{G}(X)$ denotes $E_{G}[X, V(G)\setminus X]$. 
We sometimes denote $E_G[X, Y]$, $\delta_G(X)$, $\parNei{G}{X}$ simply by   
$E[X,Y]$, $\delta(X)$, $\Gamma(X)$, respectively, 
if their subscripts are apparent from the context.

\subsection{Matchings} 
A set of edges is  a {\em matching} 
if any distinct two are disjoint.  
We say that a matching $M$ {\em covers} a vertex $v$ 
if $v$ is adjacent to an edge in $M$, 
otherwise we say that $M$ {\em exposes} $v$.

A {\em maximum matching} is a matching with the greatest cardinality. 
A {\em perfect matching} is a matching that covers all vertices. 
Note that a perfect matching is a maximum matching but the converse does not necessarily hold. 
A graph is {\em factorizable} if it has a perfect matching.  
A {\em near-perfect matching} is a matching that covers all vertices except for one. 
A graph is {\em factor-critical} if, for any vertex,
 there is a near-perfect matching that exposes it. 
 Let $M$ be a matching of a graph $G$. 
We say that $X$ is {\em closed with respect to} $M$ 
if $\parcut{G}{X}\cap M = \emptyset$. 
We denote $M\cap E(G[X])$ by $M_X$.

We say that a path or circuit is $M$-alternating if edges in $M$ and not in $M$ 
appear alternately. 
More precisely, a circuit $C$ is {\em $M$-alternating} if 
$M\cap E(C)$ is a perfect matching of $C$. 
We define three types of $M$-alternating paths.  
Let $P$ be a path with ends $x$ and $y$. 
We say that $P$ is {\em $M$-saturated} or  {\em $M$-exposed} between $x$ and $y$ 
if $M\cap E(P)$  or $E(P)\setminus M$, respectively,  is a perfect matching of $P$. 
We say that $P$ is {\em $M$-forwarding} from $x$ to $y$ 
if $M\cap E(P)$ is a near-perfect matching of $P$ that exposes $y$. 
Accordingly, a path with one vertex is $M$-forwarding. 
That is, $M$-saturated and -exposed paths  have an odd number of edges,  
whose ends are covered and exposed by $M$, respectively. 
In contrast, an $M$-forwarding path from $x$ to $y$ has an even number of edges, 
 in which $x$ is covered by $M$ as long as this path has any edge whereas $y$ is always exposed.

An {\em ear}  relative to $X$ 
is a path with two distinct ends in $X$ such that 
any other vertex is disjoint from $X$,  
or a circuit  such that exactly one vertex is in $X$.  
Let $P$ be an ear relative to $X$. 
Even if $P$ is a circuit, 
the {\em ends} are the vertices in $V(P)\cap X$, 
and the {\em internal} vertices are those in $V(P)\setminus X$. 
Hence, for convenience,  if $x$ is the only end  and $y$ is an internal vertex of $P$, 
 we  denote by $xPy$  one of the paths on $P$ between $x$ and $y$. 
The set of internal vertices of $P$ is denoted by $\earint{P}$. 
If $\earint{P}$ intersects $Y\subseteq V(G)$, 
then we say that $P$ {\em traverses} $Y$. 
We say that $P$ is an $M$-ear if $P\setminus X$ is an $M$-saturated path.

\subsection{Gallai-Edmonds Family} 
Let $G$ be a graph. 
The set of vertices that are exposed by some maximum matchings is denoted by $D(G)$. 
The set $\parNei{G}{D(G)}$ is denoted by $A(G)$, 
and the set $V(G)\setminus D(G) \setminus A(G)$ is denoted by $C(G)$. 
We call  $\{D(G), A(G), C(G)\}$ 
the {\em Gallai-Edmonds family} of $G$, 
because the {\em Gallai-Edmonds decomposition} is 
derived from a structure theorem regarding $D(G)$, $A(G)$, and $C(G)$.  

\subsection{Factor-Connected Components} 
Let $G$ be a  graph. 
An edge $e\in E(G)$   is  \textit{allowed} if 
there is a maximum matching of $G$ containing $e$. 
Let $C_1,\ldots, C_k$ be the connected components of 
the subgraph of $G$ determined by the union of allowed edges. 
We call $G[C_i]$ a {\em factor-connected component} 
or a {\em factor-component} of $G$ for each $i\in \{1,\ldots, k\}$. 
We denote the set of factor-connected components of $G$ by $\mathcal{G}(G)$.

Thus, a  graph is composed of 
its factor-connected components and  the edges 
joining distinct factor-connected components. 
In addition, 
a set of edges is a maximum matching 
if and only if it is a disjoint union of maximum matchings taken from each factor-component. 
Hence, 
we can regard factor-components as the fundamental building blocks 
that determine the matching structure of a graph.

A factor-component is {\em consistent} if it is disjoint from $D(G)$, 
otherwise it is {\em inconsistent}. 
It is also easily observed that 
a factor-component is a factorizable graph if and only if it is consistent.  
Therefore,  given a maximum matching $M$,  
a factor-component $C$ is consistent if and only if 
$M_C$ is a perfect matching of $C$. 
The sets of consistent and inconsistent factor-components of $G$ are denoted by 
$\const{G}$ and $\inconst{G}$, respectively. 
A graph is {\em  factor-connected} if it consists of only one factor-component. 
In particular, this graph is {\em consistently} factor-connected if its only factor-component is consistent.  
Note that any consistent factor-component is a  consistently factor-connected graph.


\section{Canonical Decompositions and  Aim of Our Study} \label{sec:canonical}
\subsection{Known Canonical Decompositions}
We now explain more technical details of the canonical decompositions 
that were omitted from Section~\ref{sec:intro}. 
 The {\em Dulmage-Mendelsohn}~\cite{dm1958, dm1959, dm1963}, 
  {\em Kotzig-Lov\'asz}~\cite{kotzig1959a, kotzig1959b,kotzig1960,lovasz1972structure}, 
 and  {\em Gallai-Edmonds decompositions}~\cite{gallai1964, edmonds1965} are the three known canonical decompositions and  have been extensively applied. 
They are provided by their respective structure theorems, 
which follow a certain common pattern: 
 \begin{itemize} 
 \item First, define a partition of a given graph into substructures, which is described matching theoretically and is,   by definition,  unique to each graph,   
 such as the Gallai-Edmonds family or the set of factor-components. 
 \item 
 Second, provide  statements  about how the entire  graph is structured and  
 the maximum matchings it contains, 
 such as where in the  graph there are allowed or non-allowed edges,  
 or the matching theoretic properties of the substructures determined by the partition. 
 \end{itemize} 
 Because these partitions are determined uniquely for a given graph,  
 canonical decompositions can provide us with information about  
 all maximum matchings, not just those of them that are specified in some way. 
 They therefore exhibit a powerful and versatile nature. 

The traits of the three canonical decompositions are the following.  
 \begin{itemize} 
 \item 
 The {\em Dulmage-Mendelsohn decomposition} states 
 that, for bipartite graphs, the structure of factor-components can be described as a partially ordered set with respect to a certain binary relation. 
 This decomposition provides an efficient solution of a system of linear equations by utilizing the sparsity of matrices~\cite{duff1986direct}.  Additionally, it is the origin of  {\em principal partition theory}~\cite{nakamura1988},  which is a branch of  {\em submodular function theory}~\cite{fujishige2005}. 
 \item 
The {\em Kotzig-Lov\'asz decomposition} 
captures the structure of consistently factor-connected graphs by defining a certain binary relation 
that is proved to be an equivalence relation. 
This decomposition is especially effective in the polyhedral study of matchings. 
From the Kotzig-Lov\'asz decomposition, 
many important results regarding the perfect matching polytopes have been obtained; 
see Lov\'asz and Plummer~\cite{lp1986} or Schrijver~\cite{schrijver2003} for surveys. 
\item 
Among them, the {\em Gallai-Edmonds decomposition} is probably the best known, 
because  it is the essence of characterizing the size of a maximum matching   
and designing algorithms for computing maximum matchings. 
It has contributed to matching theory from many aspects. 
This decomposition  provides properties of graphs based on the Gallai-Edmonds family. 
Some algorithms for computing the maximum matching algorithms are proposed 
using this decomposition~\cite{lp1986, cheriyan1997}. 
It also has  applications in linear algebra~\cite{DBLP:conf/fct/Lovasz79, geelen2000}. 
\end{itemize}

The exact statements of the three canonical decompositions are given in the following. 
The structures of graphs provided by Theorems~\ref{thm:dm}, \ref{thm:canonicalpartition}, and \ref{thm:gallaiedmonds} 
are the Dulmage-Mendelsohn,  Kotzig-Lov\'asz, and  Gallai-Edmonds 
decompositions, respectively.   

\begin{theorem}[Dulmage and Mendelsohn~\cite{dm1958, dm1959, dm1963}]\label{thm:dm} 
Let $G$ be a bipartite  graph with color classes $A$ and $B$, 
and let $\mathcal{G}(G)$ be denoted by  $\{ G_i : i \in I\}$, 
where $I = \{1,\ldots, |\mathcal{G}(G)|\}$.  
Let $A_i = V(G_i)\cap A$ and $B_i := V(G_i)\cap B$ 
for each $i \in I$. 
Then, there exists a partial order $\pardm{A}$ satisfying the following 
for any $i,j\in I$: 
\begin{enumerate} 
\item If $E[A_j, B_i] \neq\emptyset$, then $G_i\pardm{A} G_j$; and, 
\item if $G_i\pardm{A} H \pardm{A} G_j$ yields $G_i=H$ or $G_j = H$, 
then  $E[A_j, B_i] \neq\emptyset$. 
\end{enumerate} 
\end{theorem}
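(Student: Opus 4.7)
The plan is to define $\pardm{A}$ as the reflexive-transitive closure of the one-step relation $R$ on $\comp{G}$ given by $G_i\, R\, G_j$ iff $E[A_j, B_i]\neq\emptyset$. Reflexivity and transitivity are then built in. Condition (i) of the theorem is immediate from the definition. For condition (ii), if $G_i \pardm{A} G_j$ is a covering pair in the poset sense, then any witnessing chain $G_i\, R\, H_1\, R \cdots R\, H_{r-1}\, R\, G_j$ of length $r\ge 2$ would supply an intermediate factor-component $H_1 \notin \{G_i, G_j\}$ with $G_i \pardm{A} H_1 \pardm{A} G_j$, contradicting the covering hypothesis; hence $r=1$ and $E[A_j, B_i]\neq\emptyset$ directly.

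The substantive step is \emph{antisymmetry}: the one-step digraph of $R$ has no directed cycle through distinct factor-components. I would argue by contradiction, fixing a minimal-length cycle $G_{i_0}\to G_{i_1}\to\cdots\to G_{i_{k-1}}\to G_{i_0}$ with $k\ge 2$ and the $G_{i_l}$ pairwise distinct, together with witness edges $e_l = a_l b_{l-1}$ where $a_l\in A_{i_l}$, $b_{l-1}\in B_{i_{l-1}}$, and indices are taken modulo $k$. By the definition of a factor-component, each $e_l$ is non-allowed, i.e., lies in no maximum matching. Fix any maximum matching $M$ of $G$.

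The combinatorial core is to construct, inside each factor-component $G_{i_l}$, an $M$-saturated path $P_l$ with ends $a_l$ and $b_l$. Because $G$ is bipartite with $a_l\in A$ and $b_l\in B$, every $a_l$--$b_l$ path has odd length, so $M$-saturated simply means that the first and last edges of $P_l$ belong to $M$. Once the $P_l$ are in hand, concatenating them with the non-matching inter-component edges $e_l$ in cyclic order yields a closed $M$-alternating walk $W$; since distinct factor-components are vertex-disjoint and the $e_l$ are pairwise distinct inter-component edges, $W$ is actually an $M$-alternating circuit. Then $\su{M}{E(W)}$ is a maximum matching that contains every $e_l$, contradicting the fact that $e_l$ is non-allowed.

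The main obstacle is therefore the construction of the internal $M$-saturated paths $P_l$, which is where bipartiteness is really used. The argument rests on the following structural observation about bipartite factor-components, derivable from a componentwise application of König's theorem: any such component is either factorizable (so $M$ restricted to it is a perfect matching) or has all its $M$-exposed vertices in a single color class. In the factorizable case, the path $P_l$ is obtained directly from factor-connectedness via an $M$-alternating walk between $a_l$ and $b_l$. In the non-factorizable case, one either chooses $M$ so that $a_l$ and $b_l$ are both $M$-covered, or one performs a preliminary $M$-alternating modification within $G_{i_l}$ to cover them without decreasing the matching size; then the same factor-connectedness argument applies. With this structural lemma in place, the remainder is pure bookkeeping, so antisymmetry follows and $\pardm{A}$ is a partial order satisfying (i) and (ii).
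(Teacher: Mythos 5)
The paper states Theorem~\ref{thm:dm} as a classical result of Dulmage and Mendelsohn and gives no proof of it, so there is no in-paper argument to compare against; I therefore assess your proof on its own merits.

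Your overall strategy (close the one-step relation $R$, reduce antisymmetry to the nonexistence of a directed $R$-cycle, and derive a contradiction by splicing the witness edges together with internal $M$-saturated paths into an $M$-alternating circuit) is sound and is indeed the standard route to this theorem. The construction of $\pardm{A}$ and the verification of conditions (i) and (ii) from the covering hypothesis are fine, and the closed-walk-is-a-circuit bookkeeping is fine because the $G_{i_l}$ are vertex-disjoint and the witness edges touch each component in a single vertex of each colour.

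The gap is in the construction of the internal paths $P_l$ for a non-factorizable $G_{i_l}$. You assert that, after modifying $M$ so that $a_l$ and $b_l$ are both covered, ``the same factor-connectedness argument applies.'' The argument you invoke in the factorizable case is essentially Proposition~\ref{prop:nonpositive} (together with the parity observation that the bipartite colouring rules out the forwarding alternative). But that proposition, and also the ingredient Lemma~\ref{lem:allowed} used in its proof, are stated only for graphs equipped with a \emph{perfect} matching; neither applies to a factor-component with nonzero deficiency, even after a local alternating modification, because other vertices of the component remain exposed. An analogue of Proposition~\ref{prop:nonpositive} for deficient bipartite factor-connected graphs is true, but it is a nontrivial lemma that your proof neither states nor proves; so the non-factorizable branch of your case analysis does not close. (The subsidiary structural claim that the exposed vertices of a bipartite factor-component lie on one colour class is also only asserted, not proved, though it is a standard consequence of the Gallai--Edmonds decomposition together with the fact that a factor-critical bipartite graph is a single vertex.)

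There is a cleaner way out that you miss: the non-factorizable case never actually occurs on the cycle. By the Gallai--Edmonds structure theorem (part (iii) of Theorem~\ref{thm:gallaiedmonds}), every edge in $E[A(G),D(G)]$ is allowed, while edges between distinct factor-components are by definition non-allowed. If some $G_{i_l}$ on the cycle were inconsistent, then all of one of its colour classes lies in $D(G)$ and all of the other in $A(G)$; tracing the incoming witness edge $e_l = a_l b_{l-1}$ and the outgoing witness edge $e_{l+1} = a_{l+1} b_l$ (and using that $D(G)$ is independent and $\parNei{G}{D(G)} = A(G)$) shows that one of these two cross-component edges would have to join $D(G)$ to $A(G)$, hence be allowed---a contradiction. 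Thus every $G_{i_l}$ on a minimal cycle is a consistent (hence factorizable) factor-component, and your factorizable construction of $P_l$ via Proposition~\ref{prop:nonpositive} suffices. Inserting this observation, and making explicit the parity argument that rules out the forwarding alternative in Proposition~\ref{prop:nonpositive}, would close the proof; as written, it has a genuine hole.
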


\begin{theorem}[Kotzig~\cite{kotzig1959a, kotzig1959b, kotzig1960}]%
\label{thm:canonicalpartition}
Let $G$ be a consistently factor-connected graph. 
Define a binary relation $\sim$ as follows: 
for $u, v\in V(G)$, 
$u\sim v$  holds if $G-u-v$ is not factorizable. 
Then,  $\sim$ is an equivalence relation on $V(G)$, 
and accordingly, $\gpart{G}$ is a partition of $V(G)$, 
where $\gpart{G} := V(G)/\sim$. 
\end{theorem}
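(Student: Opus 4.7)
The plan is to handle reflexivity, symmetry, and transitivity separately, with transitivity being the substantive part. Reflexivity is immediate: since $G$ is consistently factor-connected it is factorizable, so $|V(G)|$ is even, and $G - u$ has odd order and no perfect matching, yielding $u \sim u$. Symmetry is built into the definition itself, which is symmetric in $u$ and $v$.

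For transitivity I would argue by contradiction. Assume $u \sim v$ and $v \sim w$, and suppose $G - u - w$ admits a perfect matching $M$. Regarded as a matching of $G$ missing only $\{u, w\}$, $M$ is not a maximum matching of $G$ (which admits a perfect matching), so by the classical augmenting-path theorem there is an $M$-exposed path $P$ from $u$ to $w$ in $G$, whose edges alternate non-$M$, $M$, non-$M$, $\ldots$, non-$M$. Write $P$ as $u = x_0, x_1, \ldots, x_{2k+1} = w$, and suppose for now that $v = x_j$ is an internal vertex of $P$. Exactly one of the subpaths $P_1 := uPv$ and $P_2 := vPw$ has even length. If $|E(P_1)|$ is even, a direct verification shows that $M \triangle E(P_1)$ covers $u$ via the flipped non-$M$ edges, strips $v$ of its unique $M$-edge $x_{j-1}x_j$ with nothing to replace it at $v$, and agrees with $M$ everywhere else, so it is a perfect matching of $G - v - w$, contradicting $v \sim w$. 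If instead $|E(P_2)|$ is even, the symmetric construction on $P_2$ produces a perfect matching of $G - u - v$, contradicting $u \sim v$.

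The crux, and the step I expect to be hardest, is arranging that the path $P$ can be routed through $v$. This is where the consistently factor-connected hypothesis is used essentially. If $|V(G)| = 2$ the statement is trivial; otherwise matching-coveredness forces minimum degree at least $2$ and every edge to lie in some perfect matching of $G$. My plan is to pick a perfect matching $N$ of $G$ containing an edge at $v$ distinct from the $M$-edge $vv' \in M$, so that $v$ has degree $2$ in $M \triangle N$; then $v$ lies either on the $u$-$w$ alternating component of $M \triangle N$ or on some alternating cycle. Any such cycle $C$ can be eliminated by replacing $N$ with $N \triangle E(C)$, which is still a perfect matching of $G$. It then remains to reroute, using the global alternating-connectivity afforded by factor-connectedness, so that $v$ ultimately lands on the $u$-$w$ alternating path. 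Securing this reroute is the most delicate point of the argument; once it is in hand, the symmetric-difference manipulation described above delivers the contradiction and completes the proof.
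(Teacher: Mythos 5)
Your handling of reflexivity, symmetry, and the symmetric-difference calculation in the case where $v$ actually lies on the $M$-augmenting $u$--$w$ path $P$ is all correct. But the routing step is a genuine gap, not merely a delicate point to be tidied up later, and your sketch of how to close it does not work. If $v$ lies on a cycle $C$ of $M \triangle N$, replacing $N$ by $N \triangle E(C)$ makes the new $N$ agree with $M$ on $C$, and in particular at $v$; so $v$ becomes an \emph{isolated} vertex of $M \triangle N$, which is even further from lying on the $u$--$w$ path. The cycle $C$ and the $u$--$w$ path are distinct connected components of $M \triangle N$, so no amount of flipping inside $M \triangle N$ can merge them; you would need an alternating path of $G$ lying \emph{outside} $M \triangle N$ that links them, and proving that such a path exists is essentially the alternating-connectivity lemma you are trying to avoid. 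There is also a secondary inaccuracy: a consistently factor-connected graph in this paper's sense is the induced subgraph on a factor-component and may contain non-allowed edges, so it need not be matching-covered; your minimum-degree observation applies to the spanning subgraph of allowed edges, but $M$ is free to use non-allowed edges of $G-u-w$, so the two subgraphs cannot be conflated without argument.

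The paper's proof takes a genuinely different route that dispenses with routing entirely. It fixes a perfect matching $M$ of $G$ itself (not of $G-u-w$) and first proves Proposition~\ref{prop:nonpositive}: in a consistently factor-connected graph, any two vertices are joined by an $M$-saturated or an $M$-forwarding path. It then shows (Lemma~\ref{lem:def2saturated}) that $u\not\sim w$ is equivalent to the existence of an $M$-saturated $u$--$w$ path $P$, and $u\sim v$ forces the path from $v$ to $u$ supplied by Proposition~\ref{prop:nonpositive} to be $M$-forwarding, call it $Q$. Tracing $Q$ from $v$ to its first intersection $x$ with $P$ and splicing, one parity of $|E(uPx)|$ produces an $M$-saturated $u$--$v$ path (contradicting $u\sim v$) and the other an $M$-saturated $v$--$w$ path (contradicting $v\sim w$). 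The crucial difference is that $v$ is never required to lie on $P$: Proposition~\ref{prop:nonpositive} gives a separate alternating path emanating from $v$, and the two are glued at their first meeting point. To complete your own argument you would have to prove something of the strength of Proposition~\ref{prop:nonpositive} anyway, and once you have it, splicing two alternating paths rooted at different vertices is simpler and more robust than the augment-and-reroute plan. The paper's formulation also yields, at no extra cost, the generalization $\gsim{G}$ to arbitrary graphs (via Lemmas~\ref{lem:d2single} and~\ref{lem:a2sim}), whereas your argument is intrinsically tied to the factorizable case.
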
 

\begin{theorem}[the Gallai-Edmonds structure theorem; 
Gallai~\cite{gallai1964}, Edmonds~\cite{edmonds1965}]\label{thm:gallaiedmonds}
For any graph $G$, the following hold: 
\begin{rmenum}
\item 
The graph $G[D(G)]$ consists of $|A(G)| + |V(G)|-2\nu(G)$ 
connected components, and each of them are factor-critical, 
whereas each connected component of $G[C(G)]$ is factorizable. 
\item 
Let $M$ be an arbitrary maximum matching of $G$. Then,  
for each connected component $K$ of $G[D(G)]$, the set $M_K$ is a near-perfect matching of $G$; 
each vertex in $A(G)$ is matched to a vertex in $D(G)$, and furthermore, 
if $u$ and $v$ are distinct vertices from $A(G)$, then the vertices to which they are matched belong to distinct connected components of $G[D(G)]$; 
for each connected connected component $L$ of $G[C(G)]$, the set 
$M_L$ is a perfect matching of $L$. 
\item 
All edges in $E[A(G), D(G)]$ are allowed, whereas no edge in $E(G[A(G)])$ or $E[A(G), C(G)]$ is allowed. 

\end{rmenum} 
\end{theorem}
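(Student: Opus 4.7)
The plan is to build the result on two auxiliary statements. First, a \emph{Stability Lemma}: for any $v \in D(G)$ and any maximum matching $M$, either $M$ exposes $v$, or there is an $M$-alternating path from some $M$-exposed vertex to $v$ whose internal vertices lie entirely in $D(G)$. Second, \emph{Gallai's Lemma}: a connected graph $H$ with $D(H) = V(H)$ is factor-critical. The Stability Lemma is obtained by fixing a maximum matching $M^{*}$ that exposes $v$ and tracing the connected component of $v$ in $M \triangle M^{*}$; this is an even-length alternating path from $v$ to a vertex $u$ exposed by $M$, and a preliminary observation based on Berge's augmenting-path characterization ensures that its internal vertices all lie in $D(G)$.

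From these two tools, I would first derive (ii). For $a \in A(G)$ with a neighbor $v \in D(G)$, fix a maximum matching $M^{*}$ exposing $v$; given an arbitrary maximum matching $M$, if $M$ matched $a$ to some vertex outside $D(G)$, one could concatenate the alternating path supplied by the Stability Lemma with the edge $av$ to construct a matching strictly larger than $M^{*}$, contradicting maximality. A parallel augmenting-style argument shows that the $D(G)$-partners of distinct vertices of $A(G)$ under $M$ must lie in distinct connected components of $G[D(G)]$. Claim (i) follows next. Components of $G[C(G)]$ are factorizable because every maximum matching covers $C(G)$ and, by (ii), no matching edge leaves $C(G)$ into $A(G)$; factor-criticality of a component $K$ of $G[D(G)]$ is obtained by applying Gallai's Lemma to $G[K]$ after verifying $D(G[K]) = K$, which is handled by alternating-path arguments internal to $K$ starting from a fixed vertex $v \in K$ exposed by a maximum matching of $G$. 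The component count $|A(G)| + |V(G)| - 2\nu(G)$ of $G[D(G)]$ then falls out of counting edges in a maximum matching $M$ via $|M| = \sum_{K} (|K|-1)/2 + |A(G)| + \sum_{L} |L|/2$.

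Claim (iii) is then immediate. For an edge $av \in E[A(G), D(G)]$, factor-criticality of the $G[D(G)]$-component $K$ containing $v$, already proved in (i), supplies a near-perfect matching of $K$ exposing $v$; combining this with $av$ and perfect matchings on the remaining components of $G[D(G)]$ and on $G[C(G)]$, and extending on the rest via (ii), yields a maximum matching of $G$ containing $av$. Non-allowability of edges in $E(G[A(G)])$ and in $E[A(G), C(G)]$ is direct from (ii), since every maximum matching matches each vertex of $A(G)$ into $D(G)$.

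The hard part will be the Stability Lemma together with the internal alternating-path analysis needed to lift Gallai's Lemma from $G$ to an arbitrary component $K$ of $G[D(G)]$. The delicate point is verifying that alternating paths starting from $M$-exposed vertices never leave $D(G)$; this must be handled carefully through Berge's characterization and a close tracking of which endpoints along the walk are exposed by which of the two matchings under comparison.
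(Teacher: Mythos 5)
The paper does not actually prove Theorem~\ref{thm:gallaiedmonds}; it cites it as a classical result of Gallai and Edmonds and, elsewhere, deliberately avoids using it so that the new theorems stay self-contained. So your proposal is filling a gap the paper intentionally leaves open, following the classical Gallai--Edmonds route (stability of alternating reachability plus Gallai's lemma that connected graphs with $D(H)=V(H)$ are factor-critical). That overall route is the right one.

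However, the Stability Lemma is false as you have stated it, and this is exactly the point you flagged as ``the hard part.'' Take $G = P_3$ with vertices $a,b,c$ and edges $ab,bc$, and $M = \{ab\}$. Here $D(G) = \{a,c\}$ and $A(G) = \{b\}$. For $v = a$, the unique $M$-alternating path from the unique $M$-exposed vertex $c$ to $a$ is $c\,b\,a$, and its internal vertex $b$ lies in $A(G)$, not $D(G)$. So the assertion that the internal vertices ``lie entirely in $D(G)$'' cannot hold; when one traces the component of $M\triangle M^{*}$, only the vertices at \emph{even} distance from the $M$-exposed end are forced into $D(G)$ by the flip argument, while those at odd distance typically fall into $A(G)$. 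Your use of the lemma in (ii) silently relies on the false version: you invoke it to rule out $a \in V(P)$, and a correct proof must instead observe that if $a \in V(P)$ then $a$ sits at odd distance, so its $M$-partner on $P$ sits at even distance and is therefore in $D(G)$, contradicting the assumption that $a$'s $M$-partner lies outside $D(G)$. Likewise, lifting Gallai's Lemma to a component $K$ of $G[D(G)]$ requires showing that the relevant alternating paths can be taken \emph{within} $K$; this cannot be inferred from a blanket ``paths never leave $D(G)$'' statement and requires the finer even/odd bookkeeping you allude to but do not yet carry out.
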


In addition to the statements in these three structure theorems 
 that derive the canonical decompositions,  
 additional fundamental properties are known for each canonical decomposition. 
 These include properties that use the respective canonical decompositions to describe 
 what happens after basic operations that frequently occur in graph theory,   
 such as adding or deleting   vertices and edges. 
These properties accordingly tell us how to make good use of each canonical decomposition. 
 See, for example, Lov\'asz and Plummer~\cite{lp1986} for these properties.

\subsection{Limitations of Classical Canonical Decompositions} 
Although the above canonical decompositions are quite useful, 
we sometimes encounter problems that cannot be solved with any of them, 
because  each of them only targets  a particular class of graphs 
 or they can be too sparse to provide  sufficient information.  

The Dulmage-Mendelsohn  and  Kotzig-Lov\'asz decompositions only target 
 bipartite graphs and consistently factor-connected graphs, respectively. 
The Gallai-Edmonds decomposition,  by definition, targets  any graph $G$,  
however it mainly focuses  on the structure of $G[A(G)\cup D(G)]$ 
and thus provides little information 
about the remainder of the graph, that is, $G[C(G)]$, 
which can be a vast portion.  
In particular, if a given graph $G$ is factorizable, 
then  $D(G) = A(G) = \emptyset$ and $C(G) = V(G)$ hold;  
thus,  the Gallai-Edmonds decomposition claims nothing about $G$. 
This cannot be disregarded because perfect matchings are themselves a notion 
that attracts intense attention.  
Of course, the classical Kotzig-Lov\'asz decomposition is applicable 
to each factor-component in $\comp{G[C(G)]}$,  ignoring the other part; 
however,  the information obtained by this operation 
is meaningless for the entire given graph in most contexts.   

\subsection{Our New Canonical Decomposition} 
In this paper, we present a new canonical decomposition, 
the {\em basilica decomposition}, that overcomes the limitations of the classical decompositions; that is,   this targets all graphs and  simultaneously provides further information that the Gallai-Edmonds decomposition cannot. 
The main concepts and theorems that constitute this new canonical decomposition 
are the following. 
\begin{rmenum} 
\item \label{item:order}
How a graph is organized from its factor-component  
can be described by a partially ordered structure;  
we find 
a canonically defined partial order  between factor-components, 
which is similar to that in the Dulmage-Mendelsohn decomposition (Theorem~\ref{thm:order}).  
\item \label{item:sim}
We obtain a generalization of the Kotzig-Lov\'asz decomposition 
for general graphs (Theorem~\ref{thm:generalizedcanonicalpartition}). 
This generalization considers the entire structure of a given graph 
and provides finer information than  repeated applications of the classical Kotzig-Lov\'asz decomposition. 
\item \label{item:cor} 
There is a relationship between the above two concepts, 
even though they are defined independently (Theorem~\ref{thm:base}). 
This relationship unites the two notions into a canonical decomposition, 
in which we can view a graph as an architectural building-like structure. 
We name this new canonical decomposition  the {\em basilica decomposition}.  
\end{rmenum}

Note  how this new canonical decomposition is obtained. 
All the new statements are provided with self-contained proofs in this paper, 
except for the algorithmic result in Section~\ref{sec:alg} that computes the basilica decomposition in polynomial time.  
Additionally, our results contains a greatly shortened proof of the Kotzig-Lov\'asz decomposition, which is also completely self-contained.

\section{Basic Properties}\label{sec:props} 
\subsection{On Matchings} 
We now present some basic properties of  matchings. 
We will sometimes use these properties  implicitly.  
These are easily  observed  by parity arguments  
or by taking symmetric differences of matchings, 
and readers familiar with matching theory may wish to skip 
this subsection. 

\begin{lemma}\label{lem:cut2forwarding} 
Let $G$ be a graph and $M$ be a matching of $G$. 
Let $X\subseteq V(G)$ be closed with respect to $M$, 
and let $x \in V(G)\setminus X$ and $y \in X$. 
Let $P$ be a path that is $M$-forwarding from $x$ to $y$ or 
$M$-saturated between $x$ and $y$. 
Let $z\in V(P)$ be the first vertex in $X$  that we encounter  if we trace 
$P$ from $x$.  
Then, $xPz$ is an $M$-forwarding path from $x$ to $z$ 
with $V(xPz)\cap X = \{z\}$. 
\end{lemma}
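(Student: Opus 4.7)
The plan is a direct parity argument along $P$. I would begin by orienting $P$ from $x$ to $y$ and labelling the vertices consecutively as $v_0 = x, v_1, \ldots, v_n = y$; let $k$ be the index with $z = v_k$. Because $x \notin X$ while $z \in X$, we have $k \geq 1$, and by the minimality in the choice of $z$, every $v_i$ with $i < k$ lies outside $X$. This already yields $V(xPz) \cap X = \{z\}$, which settles the second half of the conclusion.

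The core of the proof is to show that $M \cap E(xPz)$ is a near-perfect matching of $xPz$ exposing $z$. I would unify the two hypotheses on $P$ by extracting the common feature: in either case $M \cap E(P)$ is a matching of $P$ that covers every vertex of $P$ except possibly $y$. Since $x \neq y$ (as $x\notin X\ni y$), the vertex $x$ is covered, forcing $v_0 v_1 \in M$. A matching on a path with this property must alternate rigidly from the starting end, so the $M$-edges appearing on $P$ are exactly $v_0 v_1,\, v_2 v_3,\, v_4 v_5, \ldots$.

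The decisive step is to pin down the parity of $k$. The edge $v_{k-1} v_k$ joins $v_{k-1} \notin X$ to $v_k = z \in X$, so it lies in $\delta_G(X)$. The hypothesis that $X$ is closed with respect to $M$ gives $v_{k-1} v_k \notin M$, and the alternating pattern above then forces $k$ to be even. Consequently $M \cap E(xPz) = \{v_0 v_1, v_2 v_3, \ldots, v_{k-2} v_{k-1}\}$ covers every vertex of $xPz$ except $z$, which is precisely the definition of $xPz$ being $M$-forwarding from $x$ to $z$.

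There is no serious obstacle in this argument; the only point that requires mild care is handling the two hypotheses on $P$ uniformly, which is achieved by observing that both guarantee that $M\cap E(P)$ misses only $y$ and that the alternation is therefore anchored at $x$. Translating the closedness condition into the parity constraint at the crossing edge $v_{k-1}v_k$ then finishes the proof in one stroke.
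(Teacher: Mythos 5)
Your parity argument is correct, and it matches the approach the paper alludes to: the paper omits the proof of this lemma, remarking only that the statements in this subsection are ``easily observed by parity arguments,'' which is exactly what you carry out. The key chain of observations --- $x$ is covered so $v_0v_1\in M$, alternation is then forced along $P$ because every vertex except possibly $y$ is matched on $P$, and closedness of $X$ kills the crossing edge $v_{k-1}v_k$ from $M$, fixing the parity of $k$ --- is a complete and clean derivation.
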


\begin{lemma}\label{lem:allowed}
Let $G$ be a factorizable graph and  $M$ be a perfect matching of $G$, 
and let $xy \in E(G)\setminus M$.  
The following three properties are equivalent: 
\begin{enumerate}
\renewcommand{\labelenumi}{\theenumi}
\renewcommand{\labelenumi}{{\rm \theenumi}}
\renewcommand{\theenumi}{(\roman{enumi})}

\item \label{item:allowed} The edge $xy$ is allowed in $G$. 
\item \label{item:circuit} There is an $M$-alternating circuit $C$ with $xy\in E(C)$. 
\item \label{item:path} There is an $M$-saturated path between $x$ and $y$. 
\end{enumerate}
\end{lemma}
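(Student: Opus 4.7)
The plan is to prove the three statements equivalent cyclically: \ref{item:allowed} $\Rightarrow$ \ref{item:circuit} $\Rightarrow$ \ref{item:path} $\Rightarrow$ \ref{item:allowed}. The whole argument relies on the standard symmetric-difference technique for matchings, together with a parity observation about which edges of the path or circuit belong to $M$. Since $G$ is factorizable, every maximum matching is perfect, so ``allowed'' is equivalent to ``contained in some perfect matching.''

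For \ref{item:allowed} $\Rightarrow$ \ref{item:circuit}, I would pick a perfect matching $N$ with $xy\in N$ and consider the symmetric difference $M\triangle N$. Because $M$ and $N$ are both perfect matchings, $M\triangle N$ is a vertex-disjoint union of even circuits whose edges alternate between $M$ and $N$. Since $xy\in N\setminus M$, the edge $xy$ lies on one such circuit $C$, which is then the desired $M$-alternating circuit containing $xy$.

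For \ref{item:circuit} $\Rightarrow$ \ref{item:path}, I would simply delete $xy$ from $C$. The circuit $C$ has $M\cap E(C)$ as a perfect matching of $C$, and $xy\notin M$; hence the two edges of $C$ incident to $x$ and $y$ other than $xy$ both lie in $M$. The remaining path $P := C-xy$ therefore has both end-edges in $M$ and still alternates along its length, so $M\cap E(P)$ is a perfect matching of $P$, meaning $P$ is $M$-saturated between $x$ and $y$.

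For \ref{item:path} $\Rightarrow$ \ref{item:allowed}, given an $M$-saturated path $P$ between $x$ and $y$, I would form the circuit $C := P+xy$ and set $N := M\triangle E(C)$. The circuit $C$ is $M$-alternating because $P$ was $M$-saturated and $xy\notin M$, so $N$ is another perfect matching of $G$, and by construction $xy\in N$. Hence $xy$ is allowed. There is no serious obstacle here; the only point that requires care is the parity check in \ref{item:circuit} $\Rightarrow$ \ref{item:path}, where one must notice that $xy\notin M$ forces both edges of $C$ adjacent to $xy$ to belong to $M$, so that the path $P=C-xy$ is genuinely $M$-saturated rather than $M$-exposed.
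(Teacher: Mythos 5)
Your proof is correct and is precisely the standard symmetric-difference and parity argument that the paper invokes but does not write out: Lemma~\ref{lem:allowed} is stated without proof in Section~\ref{sec:props}, with the preamble remarking only that such facts ``are easily observed by parity arguments or by taking symmetric differences of matchings.'' Your cyclic implication (i)$\Rightarrow$(ii)$\Rightarrow$(iii)$\Rightarrow$(i), using $M\triangle N$ to produce the alternating circuit and the observation that $xy\notin M$ forces both edges of $C$ at $x$ and $y$ to lie in $M$, is exactly the intended verification.
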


\subsection{On the Gallai-Edmonds Family and Factor-components} 
We now present some  observations about factor-components. 
Lemmas~\ref{lem:da2path} and \ref{lem:a2d} 
are known statements that can be found  
in Edmonds' algorithm for maximum matchings or the Gallai-Edmonds structure theorem.  
These are  easily confirmed. 
\begin{definition} 
Let $G$ be a graph. 
The set of vertices that are exposed by some maximum matchings are denoted by $D(G)$. 
The set $\parNei{G}{D(G)}$ is denoted by $A(G)$, 
and the set $V(G)\setminus D(G) \setminus A(G)$ is denoted by $C(G)$. 
\end{definition}  

\begin{lemma}\label{lem:da2path}
Let $G$ be a graph and $M$ be a maximum matching. 
\begin{rmenum} 
\item \label{item:da2path:d}
A vertex $x$ is in $D(G)$ if and only if 
there exists an $M$-forwarding path from a vertex exposed by $M$ to $x$. 
\item \label{item:da2path:a} 
If a vertex $x$ is in $A(G)$, 
then there exists an $M$-exposed path between $x$ and a vertex exposed by $M$. 
\end{rmenum}
\end{lemma}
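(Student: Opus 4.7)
The plan is to establish both parts by symmetric-difference arguments that exploit the maximality of $M$. For part~\ref{item:da2path:d}, the ``if'' direction produces a maximum matching that exposes $x$: given an $M$-forwarding path $P$ whose two ends are an $M$-exposed vertex $u$ and $x$, I would set $M' := M \triangle E(P)$. Since $P$ has even length with $M\cap E(P)$ a near-perfect matching of $P$, one has $|M\cap E(P)|=|E(P)\setminus M|$, so $|M'|=|M|$; a routine check shows that $M'$ is a matching exposing $x$, placing $x\in D(G)$. For the ``only if'' direction, I would pick any maximum matching $M'$ exposing $x$ and analyse $M\triangle M'$: its components are even alternating cycles and alternating paths, and each path component must carry equal numbers of $M$- and $M'$-edges, for otherwise it would be an $M$- or $M'$-augmenting path, contradicting the maximality of one of the two. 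When $x$ is not already $M$-exposed, the component containing $x$ is then an even-length alternating path from $x$ to some other endpoint $v$ whose unique edge in $M\triangle M'$ lies in $M'\setminus M$; this forces $v$ to be $M$-exposed, and the path is $M$-forwarding with $x$ covered on it and $v$ exposed on it.

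For part~\ref{item:da2path:a}, the plan is to bootstrap~\ref{item:da2path:d}. Since $x\in A(G)=\parNei{G}{D(G)}$, I would fix a neighbour $y\in D(G)$ of $x$ and apply~\ref{item:da2path:d} to obtain an $M$-forwarding path $P$ between $y$ and some $M$-exposed vertex $u$. The intended construction is to prepend the edge $xy$ to $P$: when $xy\notin M$ and $x\notin V(P)$, the concatenation is a simple path of odd length which alternates and whose non-$M$-edges form a perfect matching of it, yielding the desired $M$-exposed path between $x$ and $u$.

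The main obstacle will be the subcase in which $x$ already lies on $P$, because then the prepending fails to give a simple path. Writing $P=y_0,y_1,\ldots,y_l$ with $y_0=y$, $y_l=u$, and $x=y_j$, I plan to dispose of this by a parity argument. The index $j$ must be odd, for if $j$ were even then $M\triangle E(y_jPu)$ would be a maximum matching exposing $x$, forcing $x\in D(G)$ and contradicting $x\in A(G)$, which by definition is disjoint from $D(G)$. With $j$ odd, the subpath $y_jPu$ itself begins and ends with non-$M$ edges and has odd length, so it is $M$-exposed between $x$ and the $M$-exposed vertex $u$. The edge case $xy\in M$ just forces $y$ to be the $M$-partner of $x$, giving $x=y_1$ with $j=1$ odd, and reduces to the same subpath construction.
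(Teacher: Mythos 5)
Your proof is correct and is precisely the standard symmetric-difference argument that the paper has in mind when it declares these two facts ``easily confirmed'' and leaves them unproved. For part~(i) you correctly verify both directions: the ``if'' direction by checking that switching $M$ along the forwarding path preserves cardinality while transferring the exposure from the $M$-exposed end to $x$, and the ``only if'' direction by locating $x$'s component in $M\triangle M'$ and using the absence of augmenting paths to force it to be an even alternating path terminating at an $M$-exposed vertex. For part~(ii) you correctly reduce to~(i), and your parity analysis of the index $j$ is sound: the crucial observation that $j$ even would put $x$ in $D(G)$, contradicting $A(G)\cap D(G)=\emptyset$, is exactly the right step, and you correctly note that $xy\in M$ forces $x=y_1$ and hence falls under the ``$x$ on $P$'' branch with $j=1$ odd. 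This matches the intended argument in spirit and detail, so there is nothing to add beyond the observation that the paper omits the proof entirely.
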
 

\begin{lemma} \label{lem:a2d}
Let $G$ be a graph. 
For any maximum matching of $G$, 
the vertex to which a vertex in $A(G)$ is matched is in $D(G)$. 
Accordingly, 
no edge in $\cut{C(G)}$ is allowed. 
\end{lemma}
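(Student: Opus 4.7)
The plan is to establish the first assertion (every $v\in A(G)$ is $M$-matched to a vertex of $D(G)$, for every maximum matching $M$), and then to derive the ``accordingly'' clause as a one-line corollary. Fix a maximum matching $M$ and $v\in A(G)$. Since $A(G)\cap D(G)=\emptyset$ by definition, $v$ is $M$-covered; let $u$ be its $M$-partner, and let $w\in D(G)$ be some neighbor of $v$. Apply Lemma~\ref{lem:da2path}\ref{item:da2path:d} to $w$ to obtain an $M$-forwarding path $P = v_0 v_1 \cdots v_{2k}$ with $v_0 = w$ and $v_{2k} = s$ an $M$-exposed vertex, so that the near-perfect matching $M\cap E(P)$ consists precisely of the edges $v_0 v_1, v_2 v_3, \ldots, v_{2k-2} v_{2k-1}$.

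I would then split into two cases according to whether $v\in V(P)$. \emph{Case A} ($v\notin V(P)$): first observe that $u\notin V(P)$ as well, since $u$ is $M$-covered (ruling out $u=s$) and $u$'s unique $M$-partner is $v\notin V(P)$ (ruling out $u$ being matched via some edge of $P$). Prepending $uv\in M$ and $vw\notin M$ to $P$ therefore gives a simple path $Q = u, v, v_0, v_1, \ldots, v_{2k}$, and a direct inspection shows that $M\cap E(Q)$ is a near-perfect matching of $Q$ exposing $s$, so $Q$ is $M$-forwarding from $u$ to $s$. Lemma~\ref{lem:da2path}\ref{item:da2path:d} then yields $u\in D(G)$.

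\emph{Case B} ($v\in V(P)$): write $v = v_i$, where $1\le i\le 2k-1$ (the bounds follow from $v\neq w$ and $v\notin D(G)$). Examining $M\cap E(P)$, the parity of $i$ pins down the $M$-partner of $v$ inside $P$: we have $u = v_{i-1}$ when $i$ is odd, and $u = v_{i+1}$ when $i$ is even. If $i$ is odd, the sub-path $v_{i-1} P v_{2k}$ is itself $M$-forwarding from $u$ to $s$, so Lemma~\ref{lem:da2path}\ref{item:da2path:d} gives $u\in D(G)$. If $i$ is even, the analogous sub-path $v_i P v_{2k}$ is $M$-forwarding from $v$ to $s$, whence Lemma~\ref{lem:da2path}\ref{item:da2path:d} would give $v\in D(G)$, contradicting $v\in A(G)$; thus the even sub-case cannot occur.

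For the ``accordingly'' clause, note first that every neighbor of $D(G)$ lies in $D(G)\cup A(G)$ by definition of $A(G)$, so $\cut{C(G)}\subseteq E[A(G), C(G)]$; and if an edge $ca\in E[A(G), C(G)]$ (with $a\in A(G)$, $c\in C(G)$) were allowed, any maximum matching $M$ containing $ca$ would have $a$ matched to $c\notin D(G)$, contradicting the first assertion. I expect the main technical hurdle to be the parity bookkeeping in Case B: one has to verify, for each parity of $i$, which sub-path is $M$-forwarding, which vertex is its covered endpoint, and which is the $M$-exposed endpoint, and in particular that the even-parity sub-case really does force $v$ into $D(G)$. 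Once that verification is in place, both halves of the lemma follow cleanly.
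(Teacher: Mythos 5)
The paper does not supply a proof of this lemma---it is stated to be an easily confirmed fact implicit in Edmonds' algorithm or the Gallai-Edmonds structure theorem---so there is no proof in the source to compare against, but your argument is a correct and complete verification via the standard alternating-path construction. The case split on whether $v$ lies on $P$, the parity analysis locating the $M$-partner $u$ on $P$ (including the even-index sub-case, which correctly terminates in a contradiction rather than a conclusion about $u$), and the reduction of the ``accordingly'' clause to the first assertion via $\delta(C(G)) \subseteq E[A(G), C(G)]$ are all sound.
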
 

The next proposition, which characterizes consistent and inconsistent factor-components, 
 follows immediately from Lemma~\ref{lem:a2d}. 
\begin{proposition}\label{prop:fcomp2dac}  
Let $G$ be a graph. 
A factor-component of $G$ is inconsistent 
if and only if it is a factor-component of $G[A(G)\cup D(G)]$. 
A factor-component of $G$ is consistent 
if and only if it is a factor-component of $G[C(G)]$. 
\end{proposition}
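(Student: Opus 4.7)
The plan is to derive Proposition~\ref{prop:fcomp2dac} from Lemma~\ref{lem:a2d} together with the definitional partition $V(G)=D(G)\sqcup A(G)\sqcup C(G)$. The starting point is the second assertion of Lemma~\ref{lem:a2d}, which forbids any allowed edge from belonging to $\delta(C(G))$. Consequently, the spanning subgraph of allowed edges decomposes between $G[C(G)]$ and $G[A(G)\cup D(G)]$, so every factor-component of $G$ is entirely contained in exactly one of these two induced subgraphs.

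Next, I would verify that, for $X\in\{C(G),\,A(G)\cup D(G)\}$, an edge of $G[X]$ is allowed in $G[X]$ if and only if it is allowed in $G$. Any maximum matching $M$ of $G$ avoids $\delta(X)=\delta(C(G))$ by Lemma~\ref{lem:a2d}, so $M=M_X\cup M_{V(G)\setminus X}$, and $M_X$ is necessarily a maximum matching of $G[X]$ (otherwise a strictly larger matching of $G[X]$ would combine with $M_{V(G)\setminus X}$ into a matching of $G$ larger than $M$). Conversely, for a maximum matching $N$ of $G[X]$ containing a given edge $e$, combining $N$ with $M_{V(G)\setminus X}$ for any maximum matching $M$ of $G$ yields a matching of $G$ of size $|M_X|+|M_{V(G)\setminus X}|=|M|$ containing $e$, which is maximum. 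Hence the factor-components of $G$ that lie in $X$ are precisely the factor-components of $G[X]$.

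Then I would align this $X$-dichotomy with consistency. A factor-component $F\subseteq C(G)$ is disjoint from $D(G)$ and is therefore consistent. For a factor-component $F\subseteq A(G)\cup D(G)$, I would show $V(F)\cap D(G)\neq\emptyset$ by picking any $v\in V(F)$: if $v\in D(G)$ we are done, and if $v\in A(G)$ then $v\notin D(G)$ forces $v$ to be covered by every maximum matching, its matching edge $vw$ is allowed, and Lemma~\ref{lem:a2d} yields $w\in D(G)$, whence $w\in V(F)$.

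The main obstacle is really just the bookkeeping in the second step, i.e., checking that a maximum matching of $G$ restricts to a maximum matching on each side of the $C(G)$-partition and that such restrictions glue back to maximum matchings of $G$. Lemma~\ref{lem:a2d} does the heavy lifting by forbidding $M$ from crossing $\delta(C(G))$; once the maximality of $M_X$ on each side is invoked, both directions of both equivalences of the proposition follow immediately.
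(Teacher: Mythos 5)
Your proof is correct, and it is essentially the argument the paper has in mind: the paper simply asserts that the proposition ``follows immediately from Lemma~\ref{lem:a2d}'' without spelling anything out, and your writeup supplies exactly the bookkeeping that is being elided --- that no allowed edge crosses $\delta(C(G))$, that a maximum matching of $G$ therefore restricts to a maximum matching of each of $G[C(G)]$ and $G[A(G)\cup D(G)]$ and conversely such restrictions glue, so ``allowed'' is the same notion in $G$ and in each induced subgraph, and finally that every factor-component landing in $A(G)\cup D(G)$ meets $D(G)$ (via the matched partner of a vertex in $A(G)$).
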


\subsection{On Factor-critical Graphs} 
We now present some fundamental properties of factor-critical graphs. 
Some of these are well-known, but we again present their proofs.  
The next one can be easily obtained  by considering symmetric differences of matchings: 

\begin{lemma} \label{lem:path2root} 
Let $M$ be a near-perfect matching of a graph $G$ that exposes $v\in \Vg$. 
Then, $G$ is factor-critical if and only if for any $u\in \Vg$ there exists 
an $M$-\zero path from $u$ to $v$.
\end{lemma}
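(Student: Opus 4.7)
The plan is to use the standard technique of symmetric differences of matchings in both directions; no deeper machinery is needed.

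For the forward direction, suppose $G$ is factor-critical and fix $u\in V(G)$. The case $u=v$ is immediate, since the single-vertex path on $v$ is $M$-forwarding by the convention established in Section~\ref{sec:def}. For $u\neq v$, I would invoke factor-criticality at $u$ to obtain a near-perfect matching $N$ exposing $u$, and then examine $M\oplus N$. Every vertex has degree at most $2$ in this subgraph, and every vertex distinct from $u$ and $v$ is covered by both $M$ and $N$ and so has degree $0$ or $2$ there; only $u$ (covered by $M$, exposed by $N$) and $v$ (covered by $N$, exposed by $M$) have odd degree. Consequently, the connected component of $M\oplus N$ containing $u$ is a path $P$ from $u$ to $v$ whose edges alternate between $M$ and $N$, beginning with an $M$-edge at $u$ and ending with an $N$-edge at $v$. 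A short parity check then shows that $M\cap E(P)$ is a near-perfect matching of $P$ exposing $v$, that is, $P$ is $M$-forwarding from $u$ to $v$.

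For the converse, assume that for every $u\in V(G)$ there is an $M$-forwarding path from $u$ to $v$, and fix $u\in V(G)$. The case $u=v$ is immediate using $M$ itself. Otherwise, let $P$ be the hypothesized $M$-forwarding path from $u$ to $v$ and put $M':=M\oplus E(P)$. Since $P$ is $M$-forwarding, $M\cap E(P)$ is a perfect matching of $P-v$, while $E(P)\setminus M$ is a perfect matching of $P-u$; hence the symmetric difference merely swaps one for the other inside $V(P)$ and leaves $M$ untouched outside $V(P)$. So $M'$ is a matching covering every vertex except $u$, that is, a near-perfect matching of $G$ exposing $u$, proving $G$ factor-critical.

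There is no real obstacle here; the only points requiring a bit of care are the degenerate case $u=v$ (which the definition of $M$-forwarding path deliberately allows) and the parity/alternation bookkeeping that identifies which edges of the component of $M\oplus N$ through $u$ belong to $M$. Both are routine and will take only a few lines in the actual proof.
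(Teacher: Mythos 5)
Your proof is correct and uses precisely the symmetric-difference argument the paper indicates for this lemma (``The next one can be easily obtained by considering symmetric differences of matchings''); the parity bookkeeping identifying the component of $M\oplus N$ through $u$ as an $M$-forwarding path, and the converse via $M\oplus E(P)$, are both carried out accurately.
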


Lemma~\ref{lem:path2root}  leads to the following three statements: 
\begin{lemma}\label{lem:fc2union}
Let $G$ be a graph and $M$ be a matching of $G$. 
Let $H_1$ and  $H_2$ be factor-critical subgraphs of $G$ such that 
there exists $v\in V(H_1)\cap V(H_2)$ and, for each $i \in \{ 1, 2\}$, 
$M_{H_i}$ is a near-perfect matching of $H_i$ exposing only $v$. 
Then, $H_1 + H_2$ is factor-critical. 
\end{lemma}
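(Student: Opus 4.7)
The plan is to prove this by a direct application of Lemma~\ref{lem:path2root}, which characterizes factor-critical graphs in terms of the existence of forwarding paths to a fixed exposed vertex.

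First, I would set $H := H_1 + H_2$ and observe that $M_H = M_{H_1} \cup M_{H_2}$, since $E(H) = E(H_1)\cup E(H_2)$. I would verify that $M_H$ is a near-perfect matching of $H$ exposing only $v$: every vertex in $V(H_1)\setminus\{v\}$ is covered by $M_{H_1}\subseteq M_H$, every vertex in $V(H_2)\setminus\{v\}$ is covered by $M_{H_2}\subseteq M_H$, and $v$ is incident to no edge of $M_{H_1}\cup M_{H_2}$ since it is exposed by each $M_{H_i}$ and every edge of $M_H$ lies in $E(H_1)\cup E(H_2)$.

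Next, I would invoke Lemma~\ref{lem:path2root} for each $H_i$: since $H_i$ is factor-critical and $M_{H_i}$ exposes only $v$, for every $u\in V(H_i)$ there is an $M_{H_i}$-forwarding path $P$ from $u$ to $v$ within $H_i$. Because $E(P)\subseteq E(H_i)$, we have $M\cap E(P) = M_{H_i}\cap E(P) = M_H\cap E(P)$, so $P$ is also an $M_H$-forwarding path in $H$. Doing this for both $i=1$ and $i=2$ covers every vertex of $V(H) = V(H_1)\cup V(H_2)$. Applying the converse direction of Lemma~\ref{lem:path2root} to $H$ with the near-perfect matching $M_H$ then yields that $H_1+H_2$ is factor-critical.

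There is no real obstacle here; the only point requiring a bit of care is the bookkeeping that an $M_{H_i}$-forwarding path in $H_i$ remains $M_H$-forwarding when viewed inside $H$, which follows because the forwarding property only depends on the restriction of the matching to the edges of the path.
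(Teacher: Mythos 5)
Your proof is correct and follows essentially the same approach as the paper's: verify that $M_{H_1}\cup M_{H_2}$ is a near-perfect matching of $H_1+H_2$ exposing only $v$, then apply both directions of Lemma~\ref{lem:path2root}. The paper's version is more terse, but the mechanism is identical; your extra bookkeeping (that an $M_{H_i}$-forwarding path remains $M_H$-forwarding) is exactly the detail the paper leaves implicit.
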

\begin{proof}
Obviously, $M_1\cup M_2$ is a near-perfect matching of $H_1 + H_2$, 
exposing only $v$. 
As $H_1$ and $H_2$ are both factor-critical, 
the claim follows from Lemma~\ref{lem:path2root}. 
\qed
\end{proof}

\begin{proposition}[implicitly stated in Lov\'asz~\cite{lovasz1972a}]\label{prop:fc_choice}
Let $G$ be a factor-critical graph, let $v\in V(G)$,
and let $M$ be a near-perfect matching that exposes $v$. 
Then, for any $e\in \cut{v}$, 
there is an $M$-ear relative to $v$ that contains $e$.

\end{proposition}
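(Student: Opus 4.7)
The plan is to construct the required $M$-ear as a circuit through $v$, obtained by closing an $M$-forwarding path from the other endpoint of $e$ back to $v$ with the edge $e$ itself. Write $e = vu$ with $u \neq v$. Since $X=\{v\}$ is a singleton, the path option in the definition of an ear is vacuous, and an ear relative to $\{v\}$ must be a circuit containing $v$; my target is therefore a circuit $C$ through $v$ with $e\in E(C)$ such that $C-v$ is an $M$-saturated path.

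To produce $C$, I would first apply Lemma~\ref{lem:path2root} at $u$: the factor-criticality of $G$, together with $M$ being a near-perfect matching exposing only $v$, supplies an $M$-forwarding path $P$ from $u$ to $v$. A short parity check rules out the degenerate possibility that $P$ coincides with $e$: the length of $P$ cannot be $0$ since $u\neq v$, and it cannot be $1$ either, because then $P$ would be the single edge $uv$, giving $M\cap E(P)=\emptyset$ (as $v$ is exposed by $M$) and hence failing to cover $u$, contradicting the definition of an $M$-forwarding path. Consequently $|E(P)|\geq 2$ and $e\notin E(P)$, so $C := P + e$ is a circuit passing through $v$ exactly once and containing $e$.

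Next I would verify that $C-v$ is $M$-saturated. Because $v$ is exposed by $M$ in the whole graph, no edge of $M$ is incident to $v$, so deleting $v$ from $P$ does not remove any edge of $M\cap E(P)$; thus $M\cap E(P-v)=M\cap E(P)$, which is a perfect matching of $P-v$ by the $M$-forwarding property. Hence $C-v=P-v$ is $M$-saturated, completing the verification that $C$ is an $M$-ear relative to $v$ containing $e$.

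There is no substantial obstacle in this argument; the whole proof is an invocation of Lemma~\ref{lem:path2root} followed by parity bookkeeping. The only delicate point is confirming that the forwarding path produced by the lemma is not the single edge $e$, but this is immediate from the requirement that $u$ be $M$-covered along the path.
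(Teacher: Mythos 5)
Your proposal is correct and follows essentially the same route as the paper's proof: invoke Lemma~\ref{lem:path2root} to get an $M$-forwarding path $P$ from $u$ to $v$, then close it with $e$ to form the $M$-ear. The paper states this in two lines without the bookkeeping; your extra checks (that $P$ has length at least two, hence $e\notin E(P)$, and that $C-v$ is $M$-saturated) simply make explicit what the paper takes as immediate.
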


\begin{proof}
Let $u\in V(G)$ be the  end of the edge $e$ other than $v$. 
From Lemma~\ref{lem:path2root}, 
there is an $M$-forwarding path $P$ from $u$ to $v$. 
Thus, $P+e$ is a desired $M$-ear. 
\qed
\end{proof}

\begin{theorem}[implicitly stated in Lov\'asz~\cite{lovasz1972a}]\label{thm:fc_nice}
Let $G$ be a factor-critical graph. 
For any  factor-critical subgraph $G'$ such that $G-V(G')$ is factorizable, 
the graph $G/G'$ is factor-critical.
\end{theorem}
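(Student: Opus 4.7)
The plan is to exhibit, for every vertex $w$ of $G/G'$, a near-perfect matching of $G/G'$ that exposes $w$. Write $g'$ for the new vertex of $G/G'$ obtained by contracting $V(G')$. Exposing $g'$ is immediate: any perfect matching $M^{*}$ of $G-V(G')$, which exists by the factorizability hypothesis, survives into $G/G'$ as a near-perfect matching exposing $g'$. It remains to treat an arbitrary $u\in V(G)\setminus V(G')$.

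To handle such a $u$, I would first build an auxiliary near-perfect matching of $G$ for which $V(G')$ is closed. Choose any $v^{*}\in V(G')$ and a near-perfect matching $M'$ of $G'$ exposing $v^{*}$ (which exists by factor-criticality of $G'$); then $M:=M^{*}\cup M'$ is a near-perfect matching of $G$ exposing only $v^{*}$, and no edge of $M$ lies in $\cut{V(G')}$. Factor-criticality of $G$ combined with Lemma~\ref{lem:path2root} provides an $M$-forwarding path $P$ from $u$ to $v^{*}$; Lemma~\ref{lem:cut2forwarding} applied with $X=V(G')$ then yields a first vertex $z\in V(G')$ along $P$ (tracing from $u$) such that $Q:=uPz$ is $M$-forwarding from $u$ to $z$ with $V(Q)\cap V(G')=\{z\}$. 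Now let $\tilde{Q}$ be the image of $Q$ in $G/G'$ obtained by relabeling $z$ as $g'$. Because $z$ is the only vertex of $Q$ in $V(G')$ and $z$'s unique $Q$-edge is a crossing edge (hence not in $M$), every $M$-edge of $Q$ sits inside $G-V(G')$ and therefore inside $M^{*}$. Consequently $M^{*}\cap E(\tilde{Q})=M\cap E(Q)$, which says $\tilde{Q}$ is $M^{*}$-forwarding from $u$ to $g'$ in $G/G'$, and the symmetric difference $M^{*}\triangle E(\tilde{Q})$ is a near-perfect matching of $G/G'$ exposing $u$.

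The main obstacle circumvented here is that an arbitrary near-perfect matching of $G$ exposing $u$ can cross $\cut{V(G')}$ many times, so its naive image in $G/G'$ is not a matching. The key device is to work with the specific matching $M=M^{*}\cup M'$ for which $V(G')$ is closed, and then to use the forwarding-path machinery of Lemmas~\ref{lem:path2root} and~\ref{lem:cut2forwarding} to single out exactly one crossing edge that becomes the match of $g'$ in the constructed matching.
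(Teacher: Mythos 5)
Your proposal is correct and follows essentially the same strategy as the paper's proof: take a perfect matching $M^*$ of $G-V(G')$, extend it to a near-perfect matching $M^*\cup M'$ of $G$ by adding a near-perfect matching $M'$ of $G'$ exposing a chosen $v^*$, use Lemma~\ref{lem:path2root} to get a forwarding path from the arbitrary vertex $u$ to $v^*$, and truncate it at the first entry into $V(G')$ to obtain an $M^*$-forwarding path to $g'$ in $G/G'$. The only cosmetic differences are that you explicitly cite Lemma~\ref{lem:cut2forwarding} for the truncation step (which the paper leaves implicit) and close by taking the symmetric difference rather than invoking Lemma~\ref{lem:path2root} in the reverse direction; both are equivalent ways of finishing.
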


\begin{proof}
Let $M$ be a perfect matching of $G-V(G')$. 
Note that $M$ is also a near-perfect matching of $G/G'$ 
that exposes the  vertex $g'$ corresponding to $G'$.   
Arbitrarily choose $v\in V(G')$, 
and let $M'$ be a near-perfect matching of  $G'$ that exposes $v$. 
Then, $M'\cup M$ is a near-perfect matching of $G$ that exposes $v$. 
Let $x$ be an arbitrarily chosen vertex in $V(G)\setminus V(G')$. 
From Lemma~\ref{lem:path2root}, there is an $M'\cup M$-forwarding path $P$ from $x$ to $v$. 
Trace $P$ from $x$, and let $y$ be the first encountered vertex in $V(G')$. 
Then, in the graph $G/G'$, the path $xPy$ corresponds to an $M$-forwarding path 
from $x$ to $g'$. 
Hence, from Lemma~\ref{lem:path2root} again, $G/G'$ is factor-critical. 
\qed
\end{proof}

\section{Structure of Alternating Paths in Consistently Factor-connected Graphs} \label{sec:nonpositive}
We now present our new results. 
In this section,  we prove a proposition about consistently factor-connected graphs 
to be used in later sections. 
\begin{proposition}\label{prop:nonpositive}
Let $G$ be a consistently factor-connected graph and $M$ be a perfect matching of $G$.  
Then, for any two vertices $u,v \in V(G)$, 
there is an $M$-saturated path between $u$ and $v$, 
or an $M$-\zero path from $u$ to $v$.
\end{proposition}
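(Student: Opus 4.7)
The plan is to argue by contradiction via a ``reachability set'' $H$. Define
\[
H := \{\, w \in V(G) : \text{there is an $M$-saturated path between $u$ and $w$, or an $M$-forwarding path from $u$ to $w$} \,\}.
\]
I aim to show $H = V(G)$. Trivially $u \in H$ via the one-vertex $M$-forwarding path, and $H$ is closed under $M$-partnering: appending the $M$-edge $ww'$ to an $M$-forwarding path from $u$ to $w$ produces an $M$-saturated path from $u$ to the partner $w'$, and the reverse truncation converts saturated into forwarding. A further elementary observation, via prefix subpaths, is that every vertex of any path witnessing $w \in H$ lies itself in $H$.

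Suppose $H \subsetneq V(G)$. Since $G$ is consistently factor-connected, the allowed edges span $V(G)$ connectedly, so I may pick an allowed edge $xy$ with $x \in H$ and $y \notin H$. By the $M$-closure of $H$ this edge is not in $M$, so Lemma~\ref{lem:allowed} produces an $M$-saturated path $Q$ between $x$ and $y$. I then apply Lemma~\ref{lem:cut2forwarding} with closed set $X = H$, tracing $Q$ from $y \notin H$ until the first vertex $z$ in $H$: this yields an $M$-forwarding subpath $yQz$ from $y$ to $z$ with $V(yQz) \cap H = \{z\}$.

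Since $z \in H$, take a witnessing path $P$ from $u$ to $z$; all its vertices lie in $H$, forcing $V(P) \cap V(yQz) = \{z\}$, and the concatenation of $P$ with the reverse of $yQz$ is a simple walk from $u$ to $y$ sharing exactly $z$. If $P$ is $M$-saturated, then at $z$ the $M$-edge from $P$ meets a non-$M$ edge from $yQz$, the alternation is correctly maintained, and the induced matching is perfect on the combined walk, giving an $M$-saturated path between $u$ and $y$---contradicting $y \notin H$. If instead $P$ is only $M$-forwarding to $z$, both edges at $z$ are non-$M$ and this naive combination leaves $z$ exposed in the interior, failing to be a path of the required type. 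To repair this I detour through $z$'s $M$-partner $z'$, which by inspection of the $M$-saturated $Q$ is the neighbor of $z$ on the $x$-side of $Q$ and hence lies in $H$; the walk $P \cdot zz' \cdot (z'Qx) \cdot xy$ is $M$-alternating throughout from $u$ to $y$ with induced matching near-perfect exposing only $y$, and a further application of Lemma~\ref{lem:cut2forwarding} to $xQz'$ with the closed set $V(P) \cup \{z'\}$ prunes any overlap between $V(P)$ and $V(z'Qx)$ to make this walk simple; the resulting path is $M$-forwarding from $u$ to $y$, again contradicting $y \notin H$.

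The hard part will be precisely this second case, where combining two $M$-forwarding paths at their shared exposed endpoint $z$ fails to be $M$-alternating and the detour through the partner $z'$ must be executed so as to avoid double-traversal of the edge $zz'$; the technical heart of the argument is orchestrating this detour through the coordinated second use of Lemma~\ref{lem:cut2forwarding}.
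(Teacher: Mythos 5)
Your overall framing — define the reachability set, observe it is closed under prefixes and $M$-partnering, pick an allowed edge crossing the cut because the graph is consistently factor-connected, obtain a saturated path $Q$ through that allowed edge, and splice — is the same skeleton as the paper's proof. Your case where the witnessing path $P$ is $M$-saturated to $z$ also goes through cleanly. The gap is in the case you flag as the hard part, and the proposed repair does not actually close it.

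When $P$ is $M$-forwarding from $u$ to $z$, you build the walk $P \cdot zz' \cdot (z'Qx) \cdot xy$ and then appeal to Lemma~\ref{lem:cut2forwarding} with the closed set $V(P)\cup\{z'\}$ to ``prune any overlap.'' That appeal requires $x\notin V(P)\cup\{z'\}$ (not guaranteed), and even when it applies it only returns a subpath $xQw$ to the \emph{first} vertex $w$ of $z'Qx$ lying in $V(P)\cup\{z'\}$. If $w\neq z'$, you must re-enter $P$ at $w$, and the concatenation $uPw + wQx + xy$ is an $M$-alternating path only if the edge of $P$ at $w$ pointing toward $u$ lies in $M$, i.e.\ only if $w$ is at odd distance from $u$ along $P$. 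Lemma~\ref{lem:cut2forwarding} gives no control over that parity. When $w$ sits at even distance along $P$, both edges incident to $w$ in the proposed concatenation are non-$M$ (or, equivalently, the pruned walk has odd length), and it fails to be an $M$-forwarding path from $u$ to $y$; going the other way along $P$ from $w$ produces a walk rooted at $z$ rather than at $u$, which does not yield the contradiction. So ``prune to make it simple'' is not a single lemma application; it is a parity obstruction that your construction does not resolve.

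The paper sidesteps all of this by choosing the anchor on the \emph{other} path: it first establishes (Claim~\ref{claim:nonpositive:nosaturate}) that the witnessing path $P$ to a boundary vertex $v$ must be $M$-forwarding, then traces $P$ from $u$ and lets $x$ be the \emph{first} vertex of $P$ lying on $Q$. This makes $uPx$ automatically disjoint from $Q$ except at $x$, makes $uPx$ automatically $M$-forwarding (Claim~\ref{claim:nonpositive:upx}), and reduces everything to one parity check on $Q$ itself: exactly one of $xQv$, $xQw$ is $M$-saturated, and either choice of $\alpha\in\{v,w\}$ yields an $M$-saturated path $uPx + xQ\alpha$ contradicting either the nonsaturation claim or $w\notin U$. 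That single trace of $P$ replaces your detour through $z'$ and the delicate pruning; I'd suggest reworking your second case along those lines rather than trying to rescue the $z'$-detour.
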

\begin{proof}
Let $u\in V(G)$ be an arbitrary vertex. 
Let $U\subseteq V(G)$ be the set of vertices 
that can be reached from $u$ by $M$-saturated or $M$-forwarding paths. 
We obtain this proposition by showing $U = V(G)$.
Suppose, to the contrary,  $U \subsetneq V(G)$.

\begin{cclaim}\label{claim:nonpositive:contained}
Let $v\in U$, and let $P$ be an $M$-saturated path between $u$ and $v$ or an $M$-forwarding path from $u$ to $v$. Then, $V(P)\subseteq U$ holds. 
\end{cclaim} 
\begin{proof} 
Let $w\in V(P)$. 
Then, $uPw$ is an $M$-saturated path between $u$ and $w$ or an $M$-forwarding path from $u$ to $w$. 
Therefore, $w\in U$ holds. 
Hence, we have $V(P)\subseteq U$. 
\qed 
\end{proof}

As $G$ is connected, it has some edges that join $U$ and $V(G)\setminus U$.

\begin{cclaim}\label{claim:nonpositive:nosaturate}  
Let $v\in U \cap \Gamma(V(G)\setminus U)$. 
Then, there is no $M$-saturated path between $u$ and $v$. 
\end{cclaim}
\begin{proof} 
Suppose this claim fails, 
and let $P$ be an $M$-saturated path between $u$ and $v\in U \cap \Gamma(V(G)\setminus U)$. 
From Claim~\ref{claim:nonpositive:contained}, 
$V(P)\subseteq U$ holds. 
Therefore, the vertex $v'$ is in $U$, and by letting  $w \in V(G)\setminus U$ be a vertex with $vw \in E(G)$ we have $vw \not\in M$. 
Hence, $P + vw$ is an $M$-forwarding path from $u$ to $w$, which contradicts $w\not\in U$, 
and this claim is proved. 
\qed
\end{proof}

\begin{cclaim}\label{claim:nonpositive:notinm}
No edge joining $U$ and $V(G)\setminus U$  is in $M$. 
\end{cclaim} 
\begin{proof} 
Let $vw$ be an edge with $v\in U$ and $w\in V(G)\setminus U$. 
From Claims~\ref{claim:nonpositive:contained} and \ref{claim:nonpositive:nosaturate}, 
there is an $M$-forwarding path $P$ from $u$ to $v$ with $V(P)\subseteq U$ .
Hence, if $vw\in M$ then $P+vw$ is an $M$-saturated path between $u$ and $w$, 
and this contradicts $w\not\in U$. 
Therefore, $vw \not \in M$ follows, and this claim is proved. 
\qed
\end{proof}

As $G$ is factor-connected, 
 some edges in $E[U, V(G)\setminus U]$ are allowed. 
Let $e = vw$ be one of these edges. 
From Claim~\ref{claim:nonpositive:notinm}, $e\not\in M$ holds, 
and therefore, from Lemma~\ref{lem:allowed}, 
there is an $M$-saturated path $Q$ between $v$ and $w$. 
Trace $P$ from $u$, and let $x$ be the first vertex we encounter 
that is in $Q$; 
such  $x$ certainly exists under the current hypotheses because $v\in V(P)\cap V(Q)$ holds. 
Note that by this definition of $x$, 
$uPx + xQ\alpha$ forms a path for each $\alpha \in \{v, w\}$. 

\begin{cclaim}\label{claim:nonpositive:upx} 
The path $uPx$ is $M$-forwarding from $u$ to $x$. 
\end{cclaim} 
\begin{proof} 
Suppose this claim fails, that is,  $uPx$ is an $M$-saturated path. 
Then, we have $x' \in V(uPx)$; however, at the same time, 
we have $x' \in V(Q)$, because $x\in V(Q)$ holds and $Q$ is an $M$-saturated path.  
This contradicts the definition of $x$, and this claim is proved. 
\qed
\end{proof} 

Note also that, for $\alpha$, which is equal to either $v$ or $w$, 
$xQ\alpha$ 
 is an $M$-saturated path. 
Hence, from Claim~\ref{claim:nonpositive:upx}, for this $\alpha$, it follows that 
$uPx + xQ\alpha$ is an $M$-saturated path between $u$ and $\alpha$. 
Thus, $w\in U$ holds, which is a contradiction. 
This completes the proof of this proposition.

\qed 
\end{proof}

\if0
\begin{proof}
Without loss of generality we can assume $G$ is matching-covered, 
that is, every edge of $G$ is allowed. 
Let $u\in V(G)$ be an arbtrary vertex. 
Let $U_1\subseteq V(G)$ be the set of vertices that can be reached from $u$ by an  
$M$-saturated path,
and $U_2\subseteq V(G)$ be the set of vertices that can be reached from $u$ by an $M$-\zero path
but cannot be by any $M$-saturated paths. 
We are going to obtain the claim by showing $U:= U_1\dot{\cup} U_2 = V(G)$.
Suppose that it fails, namely that $U \subsetneq V(G)$.
Then there are $v\in U$ and $w\in V(G)\setminus U$ such that $vw\in E(G)$,
since $G$ is connected. 
\begin{cclaim}
The edge $vw$ is not in $M$, 
and there is an $M$-balaced path from $u$ to $v$ 
whose vertices are all contained in $U$. 
\end{cclaim} 
\begin{proof}
By the definition of $U$, 
there is a path $P$ which is $M$-saturated between $u$ and $v$ 
or $M$-forwarding between $u$ to $v$; 
aslo it satisfies $V(P)\subseteq U$,  
since for each $z \in V(P)$ 
$uPz$ is an $M$-saturated path between $u$ and $z$ 
or an $M$-forwarding path from $u$ to $z$.  
If $P$ is $M$-saturated, therefore, 
$P+vw$ is an $M$-forwarding path from $u$ to $w$, 
which means $w\in U$, a contradiction. 
Therefore, $P$ is an $M$-forwarding path from $u$ to $v$; 
now the latter part of the claim follows. 
Now, if $vw\in M$, $P + vw$ froms an $M$-saturated path from $u$ to $w$, 
which means $w\in U$, a contradiction. 
Therefore, we have $vw\not\in M$; this comples the proof.  
\qed
\end{proof}

Since $vw$ is defined to be allowed, 
there is an $M$-saturated path $Q$ between $v$ and $w$ 
by Proposition~\ref{lem:allowed2circuit}. 
Trace $P$ from $u$ and let $x$ be the first vertex we encounter 
that is in $Q$; 
such $x$ surely exists under the current hypotheses since $v\in V(P)\cap V(Q)$. 
Note that, by this definition of $x$, 
$uPx + xQw$ forms a path. 

\begin{cclaim}\label{claim:u2x}
$uPx$ is an $M$-forwarding path. 
\end{cclaim}
\begin{proof}
Suppose the claim fails, 
which is equivalent to $uPx$ being an $M$-saturated path. 
Then, $x'\in V(uPx)$. 
On the other hand, 
since $Q$ is $M$-saturated, $x'\in V(Q)$. 
Therefore, $x'\in V(uPx)\cap V(Q)$,  
which means we encounter $x'$ before $x$ if we trace $P$ from $u$, 
a contradiction. 
\qed
\end{proof}

\begin{cclaim}\label{claim:x2w}
$xQw$ is an $M$-saturated path between $x$ and $w$. 
\end{cclaim}
\begin{proof}
If $x = v$, $xQx$ is a trivial $M$-forwarding path from $v$ to $x$. 
Even if $x \neq v$, so is it by Proposition~$\clubsuit$. 
Anyway, whether $x = v$ or not, $vQx$ is an $M$-forwarding path 
from $v$ to $x$. 
Therefore, together with $vQw$ being an $M$-saturated path, 
$xQw$ is an $M$-forwarding path from $x$ to $w$. 
\qed
\end{proof}

By Claims~\ref{claim:u2x} and \ref{claim:x2w}, 
$uPx + xQw$ is an $M$-saturated path between $u$ and $w$.  
Hence, $w\in U$, a contradiction; 
namely, we obtain $U = V(G)$, which completes the proof. 
\qed
\end{proof}
\fi

\section{Partially Ordered Structure}\label{sec:order} 
In this section, we prove that the factor-components of a  graph 
form a partially ordered set with respect to a certain canonical binary relation that we define here.  
As stated before, 
factor-components are the fundamental building blocks of a graph, in that 
a graph consists of its factor-components and the edges between them. 
However, 
how a graph is constructed from factor-components and edges 
is not arbitrary   but follows a certain rule. 
That is, given some factor-connected graphs,   
construct  a new graph by joining them with edges in an arbitrary manner. 
The  factor-components of the resulting graph will not be in general 
equal to the original factor-connected graphs.  
We show that the rule of the factor-components  is an ordered structure.

\begin{definition} 
Given a graph $G$, 
a set $X\subseteq V(G)$ is {\em separating} 
if it is a disjoint union of the vertex sets of some factor-components, 
i.e., if there exist $H_1,\ldots, H_k\in\comp{G}$, where $k\ge 1$, 
such that $X = V(H_1)\dot{\cup}\cdots \dot{\cup} V(H_k)$. 
\end{definition} 
Note that a nonempty set $X$ is separating if and only if
$\cut{X}\cap M = \emptyset$ holds for any maximum matching $M$.    

\begin{definition}
Let $G$ be a  graph, and let $G_1,G_2\in\mathcal{G}(G)$. 
A separating set $X$ is a {\em critical-inducing set for} $G_1$ 
if $V(G_1)\subseteq X$ holds and $G[X]/G_1$ is a factor-critical graph. 
Moreover, 
we say that $X$ is a {\em critical-inducing set for} $G_1$ {\em to} $G_2$ 
if $V(G_1)\cup V(G_2)\subseteq X$ holds and $G[X]/G_1$ is a factor-critical graph. 

We say $G_1\yield G_2$ if 
there is a critical-inducing set for $G_1$ to $G_2$. 
\end{definition}

We show that $\yield$ is a partial order in Theorem~\ref{thm:order}. 
Reflexivity is obvious from the definition, 
hence the following lemmas are provided for transitivity and antisymmetry. 
First of all, observe the following:  
\begin{lemma}\label{lem:order2const}
Let $G$ be a graph. 
If $X$ is a critical-inducing set for a factor-component $H \in\comp{G}$ such that $X \neq V(H)$, 
then $X\setminus V(H)\subseteq C(G)$ holds.  
Consequently, for any maximum matching $M$ of $G$, 
$M_{X\setminus V(H)}$ is a perfect matching of $G[X\setminus V(H)]$. 
Accordingly, 
if $G_1\yield G_2$ holds for 
two distinct factor-components $G_1$ and $G_2$, 
then $G_2$ is  consistent.

\end{lemma}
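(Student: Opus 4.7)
The plan is a short matching-size counting argument, comparing a lower bound on $\nu(G[X])$ coming from the hypothesis with an upper bound coming from an arbitrary maximum matching of $G$.

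Step one is to extract a perfect matching of $G[X\setminus V(H)]$ from the factor-criticality of $G[X]/H$: namely, a near-perfect matching of $G[X]/H$ that exposes the contracted vertex $g$ corresponding to $H$ uses no edge at $g$, so it lifts verbatim to a perfect matching of $G[X\setminus V(H)]$. Coupling this with any maximum matching of $H$ gives a matching of $G[X]$ of size $(|X|-|V(H)|)/2+\nu(H)$, so
\[
\nu(G[X])\ \ge\ \frac{|X|-|V(H)|}{2}+\nu(H).
\]

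Step two is to fix any maximum matching $M$ of $G$ and read $\nu(G[X])$ off from $M_X$. Because $X$ is separating, $M\cap\delta(X)=\emptyset$, which makes $M_X$ a maximum matching of $G[X]$; and because any maximum matching uses only allowed edges, which never cross factor-components, $M_X$ decomposes as $M_H\cup\bigcup_i M_{H_i}$, where $H_1,\dots,H_k$ are exactly the factor-components of $G$ comprising $X\setminus V(H)$ and each $M_{H_i}$ is maximum in $H_i$. Therefore $\nu(G[X])=\nu(H)+\sum_i\nu(H_i)$, and comparing with the bound from step one yields $\sum_i\nu(H_i)\ge\sum_i|V(H_i)|/2$. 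Since $\nu(H_i)\le|V(H_i)|/2$ for every $i$, equality must hold termwise, so each $H_i$ is factorizable, i.e.\ consistent, and Proposition~\ref{prop:fcomp2dac} places $X\setminus V(H)$ inside $C(G)$.

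Step three is just reading off the remaining assertions. Consistency now forces each $M_{H_i}$ to be a perfect matching of $H_i$, so $M_{X\setminus V(H)}=\bigcup_i M_{H_i}$ is a perfect matching of $G[X\setminus V(H)]$; and if $G_1\yield G_2$ with $G_1\neq G_2$, a critical-inducing set $X$ for $G_1$ to $G_2$ satisfies $V(G_2)\subseteq X\setminus V(G_1)\subseteq C(G)$, so $G_2$ is consistent. The only place I expect to slow down is step two, in cleanly justifying the identity $\nu(G[X])=\nu(H)+\sum_i\nu(H_i)$: this relies simultaneously on $X$ being separating (so that $M_X$ is maximum in $G[X]$) and on $M$ using only allowed edges (so that $M_X$ restricts to a maximum matching of each factor-component); once both observations are spelled out, the rest of the argument collapses to a one-line comparison of matching sizes.
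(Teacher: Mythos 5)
Your proof is correct and is essentially the same argument the paper gives, just unpacked more explicitly. Both rest on exactly the same two observations — that factor-criticality of $G[X]/H$ yields a perfect matching of $G[X\setminus V(H)]$, and that a separating set restricts any maximum matching of $G$ to a maximum matching of the induced subgraph — and you simply recast the paper's one-line conclusion (that $M_{X\setminus V(H)}$ must therefore be perfect, forcing each component $H_i$ to be factorizable) as a comparison of $\nu(G[X])$ computed two ways, which amounts to the same thing.
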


\begin{proof} 
As $G[X]/H$ is factor-critical, 
 $X\setminus V(H)$ is a separating set such that $G[ X \setminus V(H)]$ has a perfect matching. 
 Therefore, the factor-components that comprise $X\setminus V(H)$ are consistent, 
 which implies from  Proposition~\ref{prop:fcomp2dac} that they are contained in $C(G)$. 
 The remaining claims now follows immediately. 
\qed
\end{proof}

The following three lemmas can be easily confirmed  
by analogy between  factor-critical graphs and  critical-inducing sets. 
The next lemma follows from Lemmas~\ref{lem:path2root} and \ref{lem:cut2forwarding}.  
\begin{lemma}\label{lem:path2base}
Let $G$ be a  graph, $M$ be a maximum matching of $G$, 
and $X\subseteq V(G)$ be a separating set, and let $G_1\in\mathcal{G}(G)$. 
The following three statements are equivalent.  
\begin{rmenum} 
\item \label{item:path2base:main} The set $X$ is a critical-inducing set for $G_1$. 
\item \label{item:path2base:pathcut} For any $x\in X\setminus V(G_1)$, 
there exists $y\in V(G_1)$ such that there is an $M$-forwarding path from $x$ to $y$ 
whose vertices except $y$ are in $X\setminus V(G_1)$. 
\item \label{item:path2base:pathin} For any $x\in X\setminus V(G_1)$, 
there exists $y\in V(G_1)$ such that there is an $M$-forwarding path from $x$ to $y$. 
\end{rmenum}  
\end{lemma}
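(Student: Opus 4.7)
The plan is to establish the three equivalent conditions cyclically: $\mathrm{(i)} \Rightarrow \mathrm{(ii)} \Rightarrow \mathrm{(iii)} \Rightarrow \mathrm{(i)}$. The implication $\mathrm{(ii)} \Rightarrow \mathrm{(iii)}$ is immediate, since an $M$-forwarding path in $G[X]$ is automatically one in $G$.

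For $\mathrm{(i)} \Rightarrow \mathrm{(ii)}$, I would invoke Lemma~\ref{lem:order2const} to conclude that $M_{X \setminus V(G_1)}$ is a perfect matching of $G[X \setminus V(G_1)]$, so after contracting $G_1$ to a single vertex $g_1$ it becomes a near-perfect matching of the factor-critical graph $G[X]/G_1$ exposing only $g_1$. Lemma~\ref{lem:path2root} then yields, for every $x \in X \setminus V(G_1)$, an $M_{X \setminus V(G_1)}$-forwarding path in $G[X]/G_1$ from $x$ to $g_1$. Un-contracting (that is, replacing its terminal edge $wg_1$ by the pre-image edge $wy$ for an appropriate $y \in V(G_1)$) lifts this to an $M$-forwarding path in $G[X]$ from $x$ to $y$ whose interior lies in $X \setminus V(G_1)$, giving (ii).

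For $\mathrm{(iii)} \Rightarrow \mathrm{(i)}$, I would first deduce that $M_{X \setminus V(G_1)}$ is a perfect matching of $G[X \setminus V(G_1)]$: every $x \in X \setminus V(G_1)$ is $M$-covered by (iii) (its forwarding path has at least one edge, as $x \notin V(G_1)$), and since $X$ is separating, the $M$-partner of $x$ lies in $x$'s own factor-component, hence in $X \setminus V(G_1)$. Thus in $G[X]/G_1$, $M_{X \setminus V(G_1)}$ is a near-perfect matching exposing $g_1$, and Lemma~\ref{lem:path2root} reduces factor-criticality of $G[X]/G_1$ to producing, for each such $x$, an $M_{X \setminus V(G_1)}$-forwarding path in $G[X]/G_1$ from $x$ to $g_1$. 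Starting from the forwarding path $P$ in $G$ supplied by (iii), I would apply Lemma~\ref{lem:cut2forwarding} to the closed set $(V(G) \setminus X) \cup V(G_1)$---closed with respect to $M$ because every $M$-partner of a vertex in $X \setminus V(G_1)$ stays in $X \setminus V(G_1)$---to truncate $P$ at its first entry into this set; if the truncation lands in $V(G_1)$, then its interior lies in $X \setminus V(G_1)$ and contracting $G_1$ produces the desired forwarding path in $G[X]/G_1$.

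The main obstacle will be excluding the unfavourable case where the truncation first enters $V(G) \setminus X$ rather than $V(G_1)$. I expect to handle this by choosing $P$ to minimise $|V(P) \setminus X|$ among all forwarding paths from $x$ to $V(G_1)$ guaranteed by (iii): in the bad case, the first excursion of $P$ outside $X$ both enters and leaves via non-$M$-edges (since $X$ is closed with respect to $M$), so $M$ restricts to a perfect matching of the excursion's vertex set, and a local surgery---concatenating the initial segment of $P$ with a forwarding path supplied by (iii) at the vertex where this excursion begins, cut at the first common vertex---should yield a forwarding path with strictly smaller $|V(P) \setminus X|$, contradicting minimality. Making this surgery precise and verifying that the resulting concatenation is actually a path (not merely a walk) and still $M$-forwarding is the delicate technical step.
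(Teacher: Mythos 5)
Your treatment of (i) $\Rightarrow$ (ii) and (ii) $\Rightarrow$ (iii) is correct, and it matches what the paper intends (the paper gives no explicit argument, merely remarking that the lemma ``follows from Lemmas~\ref{lem:path2root} and \ref{lem:cut2forwarding}''). The problem is the step you yourself flagged as delicate: (iii) $\Rightarrow$ (i), or equivalently (iii) $\Rightarrow$ (ii). That gap is not a technicality that can be filled --- the implication is actually false.

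Here is a counterexample. Let $G$ have vertex set $\{a_1,a_2,b_1,b_2,c_1,c_2,d_1,d_2\}$ and edge set $\{a_1a_2,\ b_1b_2,\ c_1c_2,\ d_1d_2,\ b_2c_1,\ c_2a_1,\ b_1d_1,\ d_2a_1\}$. Because $a_2,c_2,d_2$ each have a forced mate, the only perfect matching is $M=\{a_1a_2,b_1b_2,c_1c_2,d_1d_2\}$, so the four $K_2$'s are the factor-components and the remaining four edges are non-allowed. Take $G_1 = G[\{a_1,a_2\}]$ and $X = \{a_1,a_2,b_1,b_2\}$, a separating set. Then (iii) holds: $b_1\text{-}b_2\text{-}c_1\text{-}c_2\text{-}a_1$ is an $M$-forwarding path from $b_1$ to $a_1\in V(G_1)$, and $b_2\text{-}b_1\text{-}d_1\text{-}d_2\text{-}a_1$ is one from $b_2$. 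But (i) fails: $G[X]$ consists of the two disjoint edges $a_1a_2$ and $b_1b_2$, so $G[X]/G_1$ has an isolated vertex $g_1$ and is not even connected, let alone factor-critical. Thus (iii) $\not\Rightarrow$ (i) and (iii) $\not\Rightarrow$ (ii).

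One can also see concretely why the proposed surgery breaks down: if $v$ is the last vertex of $P$ inside $X$ before an excursion, then the $M$-forwarding path $Q$ that (iii) supplies from $v$ must begin with the unique $M$-edge at $v$, which points \emph{backwards} along $P$. So ``cutting $Q$ at the first common vertex with the initial segment of $P$'' returns you to the previous vertex of $P$ and produces nothing new --- there is no forwarding path with fewer vertices outside $X$ to contradict minimality. In the example above, the unique choice of $P$ from $b_1$ already minimizes $|V(P)\setminus X|=2$, and the surgery from $b_2$ leads straight back to $b_1$.

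So the proposal, as a proof of the full three-way equivalence, cannot be completed. For what it is worth, an inspection of how Lemma~\ref{lem:path2base} is invoked later (in Lemmas~\ref{lem:transitivity}, \ref{lem:compclosure}, \ref{lem:increment}, etc.) shows that every application either uses (i) $\Rightarrow$ (ii) to extract a forwarding path lying inside $X$, or applies the converse direction to paths that have been explicitly constructed to lie inside $X$, i.e., that already witness (ii). So the content that is actually used and provable is the equivalence (i) $\Leftrightarrow$ (ii), together with the trivial (ii) $\Rightarrow$ (iii); the purported (iii) $\Rightarrow$ (i) is an overstatement in the source. Your correct parts match the paper's route (contract $G_1$, apply Lemma~\ref{lem:order2const} and Lemma~\ref{lem:path2root}); the failing direction is not salvageable by any choice of minimality or surgery.
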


The next lemma is immediate from Lemma~\ref{lem:fc2union}. 
\begin{lemma}\label{lem:union} 
Let $G$ be a  graph, 
and let $G_1\in \mathcal{G}(G)$. 
If $X_1, X_2 \subseteq V(G)$ are critical-inducing sets for $G_1$, 
then $X_1\cup X_2$ is also a critical-inducing set for $G_1$. 
\end{lemma}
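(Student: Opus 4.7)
The plan is to reduce the claim to Lemma~\ref{lem:fc2union} applied in the contracted setting. Let $H_i := G[X_i]/G_1$ for $i \in \{1,2\}$; by hypothesis each $H_i$ is factor-critical, and they both contain the contracted vertex $g_1$ corresponding to $G_1$. Viewing both as subgraphs of $G[X_1 \cup X_2]/G_1$ with $g_1$ identified, we have $H_1 + H_2 = G[X_1 \cup X_2]/G_1$. So if I can exhibit a matching of $G[X_1 \cup X_2]$ whose images in $H_1$ and $H_2$ are simultaneously near-perfect matchings exposing only $g_1$, then Lemma~\ref{lem:fc2union} gives factor-criticality of $G[X_1\cup X_2]/G_1$.

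First I would check that $X_1\cup X_2$ is itself a separating set. Each $X_i$ is a disjoint union of vertex sets of factor-components, one summand of which is $V(G_1)$, so the union is also a disjoint union of vertex sets of factor-components. Next I would pick an arbitrary maximum matching $M$ of $G$. By Lemma~\ref{lem:order2const}, $X_i\setminus V(G_1)\subseteq C(G)$ and $M_{X_i\setminus V(G_1)}$ is a perfect matching of $G[X_i\setminus V(G_1)]$ for each $i$. Consequently, the image of $M_{X_i}$ in $H_i$ is a near-perfect matching exposing precisely $g_1$ (the edges of $M$ lying inside $V(G_1)$ disappear upon contraction, and no edge of $M$ crosses between $V(G_1)$ and $X_i\setminus V(G_1)$ since these lie in different factor-components of $G$).

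Taking $N := M_{(X_1\cup X_2)\setminus V(G_1)}$, its restriction to each $H_i$ is exactly the near-perfect matching above, so Lemma~\ref{lem:fc2union} applies and yields that $G[X_1\cup X_2]/G_1$ is factor-critical. Together with $V(G_1)\subseteq X_1\cup X_2$ and the separating property established earlier, this shows $X_1\cup X_2$ is a critical-inducing set for $G_1$.

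I do not anticipate a substantial obstacle here: the only delicate point is ensuring that the two near-perfect matchings on $H_1$ and $H_2$ used in Lemma~\ref{lem:fc2union} can be chosen coherently, which is automatic because both arise as restrictions of the single matching $M$, hence agree wherever their supports overlap on $(X_1\cap X_2)\setminus V(G_1)$.
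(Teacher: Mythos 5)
Your argument is exactly what the paper intends: the paper simply states that this lemma is immediate from Lemma~\ref{lem:fc2union}, and you supply the expected details, using a single maximum matching $M$ so that the two near-perfect matchings agree on the overlap. One small inaccuracy: $H_1 + H_2$ need not equal $G[X_1\cup X_2]/G_1$, since $G$ may contain edges joining $X_1\setminus X_2$ to $X_2\setminus X_1$ that belong to neither $G[X_1]$ nor $G[X_2]$; but $H_1 + H_2$ is a spanning subgraph, and factor-criticality is preserved under edge addition, so the conclusion is unaffected.
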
 

The next one is easily obtained from Proposition~\ref{prop:fc_choice} and Theorem~\ref{thm:fc_nice}. 

\begin{lemma} \label{lem:inductive-ear} 
Let $G$ be a  graph and $M$ be a maximum matching of $G$, 
and let $G_1\in\mathcal{G}(G)$. 
Let $X$ and $X'$ be critical-inducing sets for $G_1$ 
with $X'\subseteq X$. 
Then, $G[X]/X'$ is factor-critical, 
 and $M_{X\setminus X'}$ is a near-perfect matching of it that exposes only the contracted vertex corresponding to $X'$. 
Moreover, if $X\subsetneq X$  holds, 
then there exists an $M$-ear relative to $X'$ 
whose internal vertices are not empty and are contained in $X\setminus X'$. 
\end{lemma}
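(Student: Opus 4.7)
The plan is to derive part (i) by applying Theorem~\ref{thm:fc_nice} to the factor-critical graph $G[X]/G_1$ and its factor-critical subgraph $G[X']/G_1$, to read off (ii) from the consistency of the factor-components contained in $X\setminus X'$, and finally to obtain (iii) by invoking Proposition~\ref{prop:fc_choice} inside $G[X]/X'$ and pulling the resulting ear back to $G$.

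First I would record the compatibility identities for iterated contractions: since $V(G_1)\subseteq X'\subseteq X$, the graph $G[X']/G_1$ is the subgraph of $G[X]/G_1$ induced on $(X'\setminus V(G_1))\cup\{g_1\}$, the induced subgraph $(G[X]/G_1)-V(G[X']/G_1)$ equals $G[X\setminus X']$, and the iterated contraction $(G[X]/G_1)/V(G[X']/G_1)$ equals $G[X]/X'$. The hypotheses of Theorem~\ref{thm:fc_nice} are then immediate: $G[X]/G_1$ and $G[X']/G_1$ are factor-critical by the assumption that $X$ and $X'$ are critical-inducing for $G_1$, and Lemma~\ref{lem:order2const} forces $X\setminus V(G_1)\subseteq C(G)$, so every factor-component of $G$ comprising $X\setminus X'$ is consistent and hence factorizable; the union of their perfect matchings shows that $G[X\setminus X']$ is factorizable. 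Theorem~\ref{thm:fc_nice} then gives (i). For (ii), since a consistent factor-component $H$ always satisfies $M_H$ being a perfect matching of $H$, taking the union over the consistent factor-components that make up $X\setminus X'$ shows that $M_{X\setminus X'}$ is a perfect matching of $G[X\setminus X']$; identifying this graph with $(G[X]/X')-x'$, where $x'$ is the vertex of $G[X]/X'$ corresponding to the contracted $X'$, gives exactly the claimed near-perfect matching exposing only $x'$.

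For (iii), assume $X'\subsetneq X$, so $G[X]/X'$ has at least two vertices; being factor-critical it is connected, so there is an edge $e$ of $G[X]/X'$ incident to $x'$. Proposition~\ref{prop:fc_choice}, applied to $G[X]/X'$ with the near-perfect matching $M_{X\setminus X'}$, the exposed vertex $x'$, and the edge $e$, yields an $M_{X\setminus X'}$-ear relative to $\{x'\}$, namely a circuit $C$ through $x'$ whose edges other than those incident to $x'$ form an $M_{X\setminus X'}$-saturated path. Pulling back the two edges of $C$ incident to $x'$ to $G$ recovers their original endpoints in $X'$, which may or may not coincide; in either case the resulting object is a path with two distinct ends in $X'$ or a circuit meeting $X'$ in exactly one vertex, its internal vertex set is a nonempty subset of $X\setminus X'$, and its internal edges form an $M$-saturated path. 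This is precisely the required $M$-ear of $G$ relative to $X'$.

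The main difficulty I anticipate is not conceptual but notational bookkeeping: verifying that the two iterated contractions really agree with the single contraction by $X'$, that $G[X']/G_1$ is realized as a genuine subgraph of $G[X]/G_1$ so Theorem~\ref{thm:fc_nice} is applicable, and handling the pullback step of (iii) carefully, since the two edges of the ear incident to $x'$ may come either from two distinct vertices of $X'$ (giving a path-type ear) or from the same vertex (giving a circuit-type ear), both of which are admissible under the paper's definition of ear.
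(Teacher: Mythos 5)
Your proposal is correct and follows exactly the route the paper indicates: the paper does not write out a detailed proof but states that the lemma "is easily obtained from Proposition~\ref{prop:fc_choice} and Theorem~\ref{thm:fc_nice}," and your argument carries that out faithfully, handling both the compatibility of the iterated contraction with the one-step contraction and the path/circuit dichotomy in the pullback of the ear. The only cosmetic caveat is the degenerate case $X=V(G_1)$ (which forces $X'=V(G_1)$ and makes everything trivial); Lemma~\ref{lem:order2const} as stated excludes this case, but $X\setminus V(G_1)=\emptyset\subseteq C(G)$ holds vacuously, so nothing breaks.
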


Transitivity of $\yield$ now follows rather easily:  

\begin{lemma}\label{lem:transitivity} 
Let $G$ be a  graph 
and $G_1$, $G_2$, $G_3$ be factor-components of $G$. 
If $G_1 \yield G_2$ and $G_2 \yield G_3$ hold, 
then $G_1 \yield G_3$ holds. 
\end{lemma}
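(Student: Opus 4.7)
My plan is to exhibit $X \cup Y$ as a critical-inducing set for $G_1$ to $G_3$, where $X$ and $Y$ are the critical-inducing sets witnessing $G_1 \yield G_2$ and $G_2 \yield G_3$, respectively. The inclusion $V(G_1) \cup V(G_3) \subseteq X \cup Y$ is immediate, and $X \cup Y$ is separating as a union of two separating sets, so the essential task reduces to verifying that $G[X \cup Y]/G_1$ is factor-critical.

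To do so I will fix an arbitrary maximum matching $M$ of $G$ and apply the equivalence (i)$\Leftrightarrow$(iii) from Lemma~\ref{lem:path2base}: it suffices to produce, for every vertex $v \in (X \cup Y) \setminus V(G_1)$, an $M$-forwarding path from $v$ to some vertex of $V(G_1)$. When $v \in X \setminus V(G_1)$, such a path is furnished directly by $X$ being critical-inducing for $G_1$. The remaining case is $v \in Y \setminus X$, in which $v$ automatically lies in $Y \setminus V(G_2)$ since $V(G_2) \subseteq X$; so applying Lemma~\ref{lem:path2base} to $Y$ and $G_2$ yields an $M$-forwarding path $P$ from $v$ to some $y_2 \in V(G_2)$.

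The crucial observation will be that the separating set $X$ is closed with respect to $M$, so Lemma~\ref{lem:cut2forwarding} applies to $P$: letting $z$ be the first vertex of $X$ encountered while tracing $P$ from $v$, the initial segment $vPz$ is $M$-forwarding from $v$ to $z$ and satisfies $V(vPz) \cap X = \{z\}$. If $z \in V(G_1)$ we are done. Otherwise $z \in X \setminus V(G_1)$, and the critical-inducing property of $X$ supplies (via Lemma~\ref{lem:path2base}(ii)) an $M$-forwarding path $Q$ from $z$ to some $y_1 \in V(G_1)$ whose entire vertex set lies in $X$. Because $V(vPz) \cap X = \{z\}$ and $V(Q) \subseteq X$, the two paths meet only at $z$, so $vPz + Q$ will be a simple $M$-forwarding path from $v$ to $y_1$, as required.

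The one point that could appear delicate is the concatenation of two $M$-forwarding paths across the intermediate factor-component $G_2$: generically, two such paths sharing a vertex can overlap elsewhere and thereby fail to assemble into a simple path. What makes the argument go through without any detour, symmetric-difference, or shortest-walk manipulation is precisely the separating nature of $X$, which via Lemma~\ref{lem:cut2forwarding} forces the initial piece of $P$ to meet $X$ only at the single boundary vertex $z$, while the continuation $Q$ is entirely inside $X$.
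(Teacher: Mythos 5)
Your proposal is correct and mirrors the paper's own proof: both take the union of the two witnessing critical-inducing sets, reduce to producing $M$-forwarding paths into $V(G_1)$ via Lemma~\ref{lem:path2base}, and handle a vertex $v$ in the second set by truncating the path toward $V(G_2)$ at the first vertex of $X$ using Lemma~\ref{lem:cut2forwarding} and concatenating with a path inside $X$. Your remark that the two pieces meet only at $z$ (hence yield a simple path) makes explicit a point the paper leaves implicit, which is a nice touch.
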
 
\begin{proof} 
Let $M$ be a maximum matching of $G$.  
Let $X_1$ and $X_2$ be critical-inducing sets for $G_1$ to $G_2$ 
and for $G_2$ to $G_3$, respectively. 
We  prove that $X_1\cup X_2$ is a critical-inducing set for $G_1$ to $G_3$. 
First, $X_1\cup X_2$ is obviously a separating set that contains  $G_1$ and $G_3$. 
Take $x\in X_1\cup X_2$ arbitrarily. 
If $x\in X_1$ holds, then,  from  Lemma~\ref{lem:path2base},  there exists an $M$-forwarding path $P_x$ 
from $x$ to a vertex in $V(G_1)$ with $V(P_x)\subseteq X_1$. 
If $x\in X_2\setminus X_1$ holds, then, from Lemma~\ref{lem:path2base}, 
there is an $M$-forwarding path $Q_x$ from $x$ to a vertex in $G_2$. 
From Lemma~\ref{lem:cut2forwarding},  
there exists $y\in X_1$ such that $xQ_xy$ is an $M$-forwarding path with $V(xQ_xy)\cap X_1 = \{y\}$. 
We  obtain an $M$-forwarding path from $x$ to a vertex in $V(G_1)$, 
that is, $xQ_xy+P_y$. 
Therefore, from Lemma~\ref{lem:path2base}, 
$X_1\cup X_2$ is a critical-inducing set for $G_1$ to $G_3$, 
and the proof is complete. 
\qed
\end{proof}

In the following, 
we  provide definitions and lemmas to prove  antisymmetry of $\yield$. 
\begin{definition}
Let $G$ be a  graph 
and $M$ be a maximum matching of $G$. 
Let $X_0$ be a nonempty proper subset of $V(G)$.   
\begin{rmenum} 
\item Let $X\subseteq V(G)$ be a nonempty set of vetices that is disjoint from $X_0$ and is 
closed with respect to $M$. 
\item Let $P$ be an $M$-ear relative to $X_0$ with $\earint{P}\neq\emptyset$ and $\earint{P}\subseteq X$. 
\end{rmenum} 
For each $x\in X$, define a set of paths $\pathfamily{x}{X}{P}{X_0}{M}{G}$ as follows: 
A path $Q$ is an element of $\pathfamily{x}{X}{P}{X_0}{M}{G}$ 
if it is  $M$-forwarding  from $x$ to a vertex $y\in \earint{P}$  
with $V(Q)\subseteq X$ and $V(Q) \cap V(P) = \{y\}$.  
Additionally, we define a property $\parcondxp{X}{P}{X_0}{M}{G}$ as follows:  
$\parcondxp{X}{P}{X_0}{M}{G}$ is true if $\pathfamily{x}{X}{P}{X_0}{M}{G} \neq \emptyset$ 
for each $x\in X$. 
\end{definition}

Lemmas~\ref{lem:int2root} to \ref{lem:order} in the following present propertites of $\Psi$. 
Among these lemmas, Lemma~\ref{lem:order} is used directly in the proof of Theorem~\ref{thm:order}. 
For two factor-components $G_1$ and $G_2$ with $G_1\yield G_2$, this lemma states that there exist $X$ and $P$ with $\parcondxp{X}{P}{G_1}{M}{G}$ and $V(G_2)\subseteq X$. 
This statement derives that $G_2 \yield G_1$ does not hold and proves antisymmetry of $\yield$. 
Lemmas~\ref{lem:int2root}, \ref{lem:compclosure}, and \ref{lem:extension} are used to prove Lemma~\ref{lem:order}. Lemma~\ref{lem:component2cut} is provided for Lemma~\ref{lem:compclosure}.

Lemma~\ref{lem:int2root} is derived rather easily by considering contatenation of paths.  

\begin{lemma}\label{lem:int2root} 
Let $G$ be a  graph, $M$ be a maximum matching of $G$, 
and $X_0$ be a nonempty proper subset of $V(G)$.  
If $\parcondxp{X}{P}{X_0}{M}{G}$ holds for $X$ and $P$, 
then, for any $x\in X$, $P$ has an end $w$ such that  there exists an $M$-forwarding path $R$ from $x$ to $w$ with $V(R)\setminus \{w\} \subseteq X$.  
\end{lemma}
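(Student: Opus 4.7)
The plan is to take the path $Q \in \pathfamily{x}{X}{P}{X_0}{M}{G}$ supplied by the hypothesis $\parcondxp{X}{P}{X_0}{M}{G}$ and prolong it along the ear $P$ through its internal vertices until it reaches an end of $P$. The desired path $R$ will be the concatenation of $Q$ with this subpath of $P$.

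Since $P$ is an $M$-ear relative to $X_0$, the subpath induced by $\earint{P}$ is $M$-saturated; hence each internal vertex, including $y$, is paired by an $M$-edge of $P$ with another internal vertex, while the two end-edges of $P$ (those joining internal vertices to the ends) lie outside $M$. In particular, exactly one of the two $P$-edges at $y$ is in $M$. Starting at $y$ and first traversing this $M$-edge, continue along $P$ in the same direction; by the alternation on $P$ one reaches an end $w$ of $P$ along an edge outside $M$. Let $R'$ denote the resulting subpath of $P$ from $y$ to $w$, and set $R$ to be the concatenation of $Q$ and $R'$ at $y$.

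Because $V(Q)\cap V(P) = \{y\}$ by hypothesis and $V(R')\setminus\{y\}\subseteq V(P)$, $R$ is a simple path. The last edge of $Q$ at $y$ is not in $M$ (since $y$ is the vertex exposed by $M\cap E(Q)$), while the first edge of $R'$ at $y$ is in $M$, so alternation extends consistently across the junction. Consequently $M\cap E(R)$ pairs $y$ with its $P$-partner and covers every vertex of $V(R)\setminus\{w\}$ while exposing $w$, making $R$ an $M$-forwarding path from $x$ to $w$. Finally, $V(Q)\subseteq X$ by hypothesis and $V(R')\setminus\{w\}\subseteq \earint{P}\subseteq X$, yielding $V(R)\setminus\{w\}\subseteq X$ as required.

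The argument is largely a parity-bookkeeping exercise; the only care required is to verify alternation at the junction $y$ and to handle the degenerate boundary cases -- when $Q$ is the trivial one-vertex path so that $x=y$, when $y$ sits at one extremity of the internal subpath of $P$, and when $P$ is a circuit so that its unique end is the vertex $w$ reached in whichever $P$-direction is chosen. None of these cases pose a substantive difficulty, as the structural properties of $M$-ears dictate the alternation uniformly.
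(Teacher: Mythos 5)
Your proof is correct and follows essentially the same route as the paper's: take the path $Q$ supplied by $\Psi$, observe that from $y\in \earint{P}$ the ear $P$ admits a direction $yPw$ toward an end $w$ that is $M$-forwarding from $y$, and concatenate. The paper states this in one line, implicitly relying on the fact that $y$ is covered by the internal $M$-saturated subpath of $P$, while you spell out the parity bookkeeping (the end-edges of $P$ lie outside $M$, the $M$-edge at $y$ determines the forwarding direction, alternation is consistent across the junction since $Q$ ends at $y$ with a non-$M$-edge); both arguments are sound.
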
 
\begin{proof} 
Let $x\in X$, and let $Q\in \pathfamily{x}{X}{P}{X_0}{M}{G}$ 
be an $M$-forwarding path from $x$ to a vertex $y \in \earint{P}$. 

Let $w$ be the end of $P$ such that $yPw$ is an $M$-forwarding path from $y$ to $w$. 
Then, $Q + yPw$ is an $M$-forwarding path with the desired property. 
\qed 
\end{proof}

The next lemma is an observation about the intersection of a consistent factor-component 
and a set of vertices closed with respect to a maximum matching.

\begin{lemma} \label{lem:component2cut} 
Let $G$ be a  graph  
and $M$ be a maximum matching of $G$. 
Let $X\subseteq V(G)$ be closed with respect to $M$, 
and let $H\in\const{G}$ be such that $V(H)\cap X \neq \emptyset$. 
Then, for any $x\in V(H)$,  there exist a vertex $y\in X$ 
and an $M$-forwarding path $P$ from $x$ to $y$ with $V(P)\setminus \{y\} \subseteq V(H)\setminus X$.  
\end{lemma}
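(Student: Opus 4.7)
The plan is to reduce the claim to Proposition~\ref{prop:nonpositive} applied inside the consistent factor-component $H$ itself, and then to truncate the resulting alternating path at its first visit to $X$ by means of Lemma~\ref{lem:cut2forwarding}. First, the case $x \in X$ is trivial: the one-vertex path at $x$ is $M$-forwarding, and we may take $y = x$. So in what follows I assume $x \in V(H)\setminus X$.

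Because $H$ is consistent, $M_H$ is a perfect matching of $H$, and by definition $H$ is consistently factor-connected. Pick any $v \in V(H)\cap X$, which exists by hypothesis. Proposition~\ref{prop:nonpositive} applied to $H$ and $M_H$ produces a path $Q$ contained in $V(H)$ which is either $M_H$-saturated between $x$ and $v$ or $M_H$-forwarding from $x$ to $v$. Since $V(H)$ is a separating set, $\cut{V(H)}\cap M = \emptyset$, so the edges of $M$ incident to $V(H)$ are exactly those of $M_H$, and therefore $Q$ is also an $M$-saturated or $M$-forwarding path of the same type in $G$. Now the hypothesis that $X$ is closed with respect to $M$ lets me invoke Lemma~\ref{lem:cut2forwarding}: trace $Q$ from $x$ and let $y$ be the first vertex of $V(Q)\cap X$, which is well defined because $v$ qualifies. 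The lemma yields that $xQy$ is an $M$-forwarding path from $x$ to $y$ with $V(xQy)\cap X = \{y\}$, and since $V(xQy)\subseteq V(Q)\subseteq V(H)$, we conclude $V(xQy)\setminus\{y\} \subseteq V(H)\setminus X$, as required.

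The only point requiring care is the bookkeeping in the transition from $H$ to $G$: a path obtained inside $H$ with respect to $M_H$ must remain a valid $M$-alternating path of the stated type in the ambient graph. This is automatic once one observes that $V(H)$ is separating with respect to any maximum matching, so $M$-alternation along vertices of $H$ is entirely determined by $M_H$. Beyond this, the argument is essentially a direct composition of Proposition~\ref{prop:nonpositive} with Lemma~\ref{lem:cut2forwarding}, and I do not anticipate any substantive obstacle.
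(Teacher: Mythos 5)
Your proof is essentially identical to the paper's: both take an arbitrary $z\in V(H)\cap X$, invoke Proposition~\ref{prop:nonpositive} inside the consistent factor-component $H$ to obtain an $M$-forwarding or $M$-saturated path from $x$ to $z$, and then truncate at the first vertex of $X$ via Lemma~\ref{lem:cut2forwarding}. The only differences are that you spell out the trivial $x\in X$ case and the transition from $M_H$-alternation to $M$-alternation, both of which the paper leaves implicit.
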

\begin{proof} 
Take $z\in X\cap V(H)$ arbitrarily. 
From Proposition~\ref{prop:nonpositive}, 
there is a path $Q$ that is $M$-forwarding from $x$ to $z$ 
or $M$-saturated between $x$ and $z$. 
Trace $Q$ from $x$, and let $y$ be the first vertex in $X$ that we encounter. 
Then, from Lemma~\ref{lem:cut2forwarding}, 
$xQy$ is a desired path. 
\qed
\end{proof}

The next lemma is  derived from Lemma~\ref{lem:component2cut} 
and is used to prove Lemma~\ref{lem:order}. 

\begin{lemma}\label{lem:compclosure} 
Let $G$ be a  graph and $M$ be a maximum matching of $G$. 
Let $X_0$ be a nonempty proper subset of $V(G)$ that is separating. 
If a set of vertices $X \subseteq C(G)$ and an $M$-ear $P$ relative to $X_0$ 
satisfy $\parcondxp{X}{P}{X_0}{M}{G}$, 
then 
$X^*$ is a separating set that satisfies $\parcondxp{X^*}{P}{X_0}{M}{G}$, 
where $X^* := X \cup \bigcup \{ V(H): H\in\mathcal{G}(G), V(H)\cap X \neq \emptyset\if0, V(H)\cap X_0 = \emptyset \fi \}$.  
Accordingly, if $X_0 = V(G_1)$ for some $G_1\in\mathcal{G}(G)$, 
then $X^*\cup V(G_1)$ is a critical-inducing set for $G_1$. 
\end{lemma}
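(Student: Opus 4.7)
The plan is to verify the two conclusions of the lemma in turn, then derive the ``accordingly'' clause. I begin with a useful observation: since every vertex of $X$ lies in $C(G)$ and hence in some consistent factor-component (Proposition~\ref{prop:fcomp2dac}), the set $X^*$ coincides with the union of $V(H)$ over all $H \in \const{G}$ with $V(H)\cap X \neq \emptyset$, so $X^*$ is by definition separating. Moreover, since $X_0$ is separating and $X$ is disjoint from $X_0$, every factor-component $H$ meeting $X$ must satisfy $V(H)\cap X_0 = \emptyset$ (otherwise $V(H)\subseteq X_0$, contradicting $V(H)\cap X \neq \emptyset$); this will be the crucial ingredient below.

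To verify $\parcondxp{X^*}{P}{X_0}{M}{G}$, fix $x \in X^*$. If $x \in X$, the hypothesis $\parcondxp{X}{P}{X_0}{M}{G}$ already supplies a witness path, which remains valid for $X^*$ since $X \subseteq X^*$. So assume $x \in X^* \setminus X$ and let $H$ be the consistent factor-component containing $x$. Apply Lemma~\ref{lem:component2cut} (using that $X$ is closed with respect to $M$) to obtain an $M$-forwarding path $R$ from $x$ to some $y' \in X$ with $V(R)\setminus\{y'\} \subseteq V(H)\setminus X$. Then apply the hypothesis to $y'$ to obtain $Q' \in \pathfamily{y'}{X}{P}{X_0}{M}{G}$, an $M$-forwarding path from $y'$ to some $y \in \earint{P}$ with $V(Q') \subseteq X$ and $V(Q') \cap V(P) = \{y\}$.

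The witness for $x$ is the concatenation $R + Q'$. Since $V(R)\setminus\{y'\}$ is disjoint from $X \supseteq V(Q')$, the two paths share exactly $y'$, so they concatenate into a single path from $x$ to $y$; because $M\cap E(R)$ is near-perfect on $R$ exposing $y'$ while $M\cap E(Q')$ is near-perfect on $Q'$ exposing $y$ and covering $y'$, the set $M \cap E(R+Q')$ is near-perfect on $R+Q'$ and exposes only $y$. For the intersection with $V(P)$: the ends of $P$ lie in $X_0$ but $V(R) \subseteq V(H)$ with $V(H)\cap X_0 = \emptyset$, so $V(R)$ avoids the ends of $P$; and $V(R)\cap \earint{P} \subseteq V(R)\cap X \subseteq \{y'\}$, and if $y'$ does happen to lie in $\earint{P}$ then $V(Q')\cap V(P) = \{y\}$ forces $y' = y$, so $V(R+Q')\cap V(P) = \{y\}$ in every case. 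Since $V(R+Q') \subseteq V(H)\cup X \subseteq X^*$, this completes the verification.

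For the ``accordingly'' clause, $X^*\cup V(G_1)$ is separating and contains $V(G_1)$, and for any $x \in X^*$ Lemma~\ref{lem:int2root} (applied to the $\Psi$ just established) supplies an $M$-forwarding path from $x$ to an end of $P$, which lies in $X_0 = V(G_1)$. This is precisely condition \ref{item:path2base:pathin} of Lemma~\ref{lem:path2base}, so $X^*\cup V(G_1)$ is a critical-inducing set for $G_1$. The main obstacle is the concatenation step of the third paragraph, specifically the intersection condition $V(R+Q')\cap V(P) = \{y\}$; the identity $V(H)\cap X_0 = \emptyset$ derived from $X_0$ being separating is what makes this succeed without further case work.
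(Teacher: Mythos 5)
Your proposal is correct and follows essentially the same route as the paper's proof: observe that $X^*$ is separating, note that the old witness paths remain valid for $x\in X$, use Lemma~\ref{lem:component2cut} to reach $X$ from new vertices $x\in X^*\setminus X$ and concatenate, and deduce the ``accordingly'' clause from Lemmas~\ref{lem:path2base} and \ref{lem:int2root}. You supply somewhat more detail than the paper at the concatenation step (explicitly checking $V(R+Q')\cap V(P)=\{y\}$ via $V(H)\cap X_0=\emptyset$), which the paper leaves implicit, but the argument is the same.
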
  
\begin{proof} 
First confirm that $X^*$ is disjoint from $X_0$ and is separating.  
Obviously, 
for each $x\in X$, any path in 
$\pathfamily{x}{X}{P}{X_0}{M}{G}$  is  also a path in $\pathfamily{x}{X^*}{P}{X_0}{M}{G}$, 
and therefore $\pathfamily{x}{X^*}{P}{X_0}{M}{G} \neq \emptyset$. 
Hence, it suffices to prove $\pathfamily{x}{X^*}{P}{X_0}{M}{G} \neq \emptyset$ for each  $x\in V(H)\setminus X$, where $H$ is a factor-component with  $V(H)\cap X \neq \emptyset$. 
As $X\subseteq C(G)$ holds, Proposition~\ref{prop:fcomp2dac} implies that $H$ is consistent. 
From Lemma~\ref{lem:component2cut}, 
there is an $M$-forwarding path $R$ from $x$ to a vertex $y \in X$ with $V(R)\setminus \{y\} \subseteq V(H)\setminus X$. 
For $Q \in \pathfamily{y}{X}{P}{X_0}{M}{G}$,  the concatenation $R + Q$ is a path in $\pathfamily{y}{X^*}{P}{X_0}{M}{G}$. 
Thus, 
we obtain $\parcondxp{X^*}{P}{X_0}{M}{G}$. 
Accordingly, the remaining claim of the lemma also follows 
from Lemmas~\ref{lem:path2base} and \ref{lem:int2root}.  
\qed
\end{proof}

Note also the following observation about $\Psi$, which is used in the proof of Lemma~\ref{lem:order}. 

\begin{lemma} \label{lem:extension} 
Let $G$ be a  graph, $M$ be a maximum matching of $G$, 
and $X_0$ be a nonempty proper subset of $V(G)$.  
\begin{rmenum} 
\item \label{item:extension:ground} If $P_0$ is an $M$-ear relative to $X_0$ with $\earint{P_0} \neq \emptyset$, 
then $\parcondxp{\earint{P_0}}{P_0}{X_0}{M}{G}$ holds. 
\item \label{item:extension:extension} Let $X$ and $P$ be such that  $\parcondxp{X}{P}{X_0}{M}{G}$ holds, 
and let $Q$ be an $M$-ear relative to $X$ with $\earint{Q}\neq\emptyset$ and $\earint{Q}\cap X_0 = \emptyset$. Then, $\parcondxp{X\cup\earint{Q}}{P}{X_0}{M}{G}$ also holds. 
\end{rmenum} 
\end{lemma}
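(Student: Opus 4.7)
The plan is to handle the two parts separately. For (i), the single-vertex path at $x$ already suffices: for each $x \in \earint{P_0}$, the trivial path on $\{x\}$ is $M$-forwarding from $x$ to $x$, lies in $\earint{P_0}$, and meets $V(P_0)$ only in $\{x\}$, so it belongs to $\pathfamily{x}{\earint{P_0}}{P_0}{X_0}{M}{G}$. One should also check, to satisfy the preconditions of $\Psi$, that $\earint{P_0}$ is closed with respect to $M$: since $P_0\setminus X_0$ is $M$-saturated, every internal vertex of $P_0$ is $M$-matched inside $\earint{P_0}$, so $\earint{P_0}$ is a legitimate choice of the set $X$ in the definition of $\Psi$.

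For (ii), I would do nothing new for $x \in X$, since any path from $\pathfamily{x}{X}{P}{X_0}{M}{G}$ automatically lies in the enlarged family. The real task is $x\in\earint{Q}$; if $x\in\earint{P}$ then again the single-vertex path works, so assume $x\notin\earint{P}$. The idea is to walk out of $\earint{Q}$ along $Q$ itself into $X$ and then append a path supplied by the hypothesis. Since $Q\setminus X$ is an $M$-saturated (hence even-length) path, a routine parity argument shows that one of the two directions along $Q$ from $x$ gives an $M$-forwarding path $R$ to an end $u\in X$ of $Q$, with $V(R)\subseteq \earint{Q}\cup\{u\}$. Now pick any $S\in\pathfamily{u}{X}{P}{X_0}{M}{G}$: it is $M$-forwarding from $u$ to some $y\in\earint{P}$ with $V(S)\subseteq X$ and $V(S)\cap V(P)=\{y\}$. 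Because $\earint{Q}\cap X=\emptyset$, the paths $R$ and $S$ meet exactly in $\{u\}$, so their concatenation $R+S$ is a simple $M$-forwarding path from $x$ to $y$ (the one formal check is that the $M$-matched/exposed status of $u$ flips correctly between $R$ and $S$).

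The subtle point, and the main obstacle, is that $R+S$ may intersect $V(P)$ at more vertices than just $y$, because $\earint{Q}\cup\{u\}$ need not avoid $\earint{P}$. This is dealt with by the observation that $\earint{P}$ is closed with respect to $M$, for the same reason as $\earint{P_0}$ in part (i). Combined with the fact that $V(R+S)\subseteq X\cup\earint{Q}$ is disjoint from $X_0$ (so that $V(R+S)\cap V(P)=V(R+S)\cap\earint{P}$), Lemma~\ref{lem:cut2forwarding} applies with closed set $\earint{P}$: letting $y'$ be the first vertex of $\earint{P}$ we meet while tracing $R+S$ from $x$, the truncation $x(R+S)y'$ is $M$-forwarding from $x$ to $y'\in\earint{P}$ and meets $V(P)$ only in $\{y'\}$, hence lies in $\pathfamily{x}{X\cup\earint{Q}}{P}{X_0}{M}{G}$. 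Finally, I need to record that $X\cup\earint{Q}$ itself is closed with respect to $M$ and disjoint from $X_0$, so that $\parcondxp{X\cup\earint{Q}}{P}{X_0}{M}{G}$ is even well-defined; closure holds because $\earint{Q}$-vertices are $M$-matched inside $\earint{Q}$ by the $M$-saturation of $Q\setminus X$, while disjointness from $X_0$ follows from the hypotheses on $X$ and $\earint{Q}$.
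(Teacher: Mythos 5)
Your proof is correct and follows essentially the same route as the paper: a trivial single-vertex path settles part (i), and for part (ii) you walk from $x$ along $Q$ to the end $u$ of $Q$ for which $xQu$ is $M$-forwarding and then splice on a path from the hypothesis $\parcondxp{X}{P}{X_0}{M}{G}$. Two small remarks: the phrase ``$M$-saturated (hence even-length)'' should read odd-length (this does not affect the parity conclusion you draw), and the case $x\in\earint{P}$ inside part (ii) is vacuous since $\earint{Q}$ is disjoint from $X\supseteq\earint{P}$; on the other hand, your extra truncation via Lemma~\ref{lem:cut2forwarding} carefully handles the possibility that $u$ itself lies in $\earint{P}$, a corner case the paper's argument handles only implicitly (by the freedom to take the trivial path there), so your write-up is if anything a bit more rigorous on that point.
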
 
\begin{proof} 
The property $\parcondxp{\earint{P_0}}{P_0}{X_0}{M}{G}$  holds 
because each vertex  $y\in \earint{P_0}$ forms a trivial $M$-forwarding path 
of $\pathfamily{y}{\earint{P_0}}{P_0}{X_0}{M}{G}$. 
For each $x\in X$, obviously $\pathfamily{x}{X\cup \earint{Q}}{P}{X_0}{M}{G} \neq \emptyset$ holds. 
For each $x\in \earint{Q}$, let $w$ be the end of $Q$ such that $xQw$ is an $M$-forwarding path from $x$ to $w$. For $R\in \pathfamily{w}{X}{P}{X_0}{M}{G}$, $xQw + R$ is a path of $\pathfamily{x}{X\cup\earint{Q}}{P}{X_0}{M}{G}$. 
Thus, $\parcondxp{X\cup\earint{Q}}{P}{X_0}{M}{G}$ is proved.  
\qed
\end{proof}

The next lemma is the key  to Theorem~\ref{thm:order}. 
\begin{lemma}\label{lem:order} 
Let $G$ be a  graph and $M$ be a maximum matching of $G$. 
Let $G_1, G_2\in\mathcal{G}(G)$ be such that $G_1\neq G_2$ and $G_1\yield G_2$ hold. 
Then there exists a set of vertices $X\subseteq V(G)$ and an $M$-ear $P$ relative to $G_1$ such that $V(G_2)\subseteq X$ and $\parcondxp{X}{P}{G_1}{M}{G}$ hold. 
\end{lemma}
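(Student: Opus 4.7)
Fix a critical-inducing set $Y$ for $G_1$ to $G_2$; by Lemma~\ref{lem:order2const}, $Y \setminus V(G_1) \subseteq C(G)$. The plan is to build $(X, P)$ incrementally using the preparatory lemmas. Apply Lemma~\ref{lem:inductive-ear} to the critical-inducing pair $V(G_1) \subsetneq Y$ to obtain an $M$-ear $P$ relative to $V(G_1)$ with $\emptyset \neq \earint{P} \subseteq Y \setminus V(G_1)$. Initialize $X := \earint{P}$; Lemma~\ref{lem:extension}\ref{item:extension:ground} then gives $\parcondxp{X}{P}{V(G_1)}{M}{G}$. Since $X \subseteq C(G)$, Lemma~\ref{lem:compclosure} lets me replace $X$ by its factor-component closure while preserving $\Psi$, and guarantees that $X \cup V(G_1)$ is now a critical-inducing set for $G_1$ contained in $Y$.

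Iterate: while $V(G_2) \not\subseteq X$, so that $X \cup V(G_1) \subsetneq Y$, Lemma~\ref{lem:inductive-ear} applied to the pair $X \cup V(G_1) \subsetneq Y$ yields an $M$-ear $Q$ relative to $X \cup V(G_1)$ with $\earint{Q} \subseteq Y \setminus (X \cup V(G_1))$. In the favorable case where both ends of $Q$ lie in $X$, $Q$ is in fact an $M$-ear relative to $X$ with $\earint{Q} \cap V(G_1) = \emptyset$, so Lemma~\ref{lem:extension}\ref{item:extension:extension} enlarges $X$ to $X \cup \earint{Q}$ while preserving $\Psi$; another application of Lemma~\ref{lem:compclosure} restores closure under factor-components and keeps $X \cup V(G_1)$ critical-inducing inside $Y$. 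Each successful round strictly increases $|X|$ within the finite set $Y$, so the process terminates with $X \cup V(G_1) = Y$, whence $V(G_2) \subseteq Y \setminus V(G_1) \subseteq X$.

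The main obstacle is the unfavorable case in which the ear $Q$ produced by Lemma~\ref{lem:inductive-ear} has at least one end in $V(G_1)$; then $Q$ is not an $M$-ear relative to $X$ and Lemma~\ref{lem:extension}\ref{item:extension:extension} cannot be applied directly. I expect to handle this by exploiting Lemma~\ref{lem:path2root} on the factor-critical quotient $G[Y]/G_1$: lifting an $M$-forwarding path from a target $v \in V(G_2) \setminus X$ to $V(G_1)$ gives a path $\Pi_v = p_0 p_1 \cdots p_k$ in $G$ with $p_0 = v$, $p_k \in V(G_1)$, and interior inside $Y \setminus V(G_1)$. Let $p_i$ be the first vertex of $\Pi_v$ lying on $V(P)$; then $p_i \in \earint{P}$ because $p_i \notin V(G_1)$ for $i < k$, and an $M$-match/parity argument forces $i$ to be even, for otherwise the $M$-match of $p_i$, namely $p_{i-1}$, would also be its $P$-neighbor through the matching edge of $P$ and would give a strictly smaller index, contradicting the minimality of $i$. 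Hence $p_0 \Pi_v p_i$ is an $M$-forwarding path from $v$ to $p_i \in \earint{P}$ lying entirely in $Y \setminus V(G_1)$ and meeting $V(P)$ only at $p_i$, furnishing the witness path required to absorb $v$ into $X$ and to conclude both $V(G_2) \subseteq X$ and $\parcondxp{X}{P}{V(G_1)}{M}{G}$.
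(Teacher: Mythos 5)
Your proposal correctly assembles the preparatory lemmas, but the iterative strategy built around a \emph{single fixed ear} $P$ has a genuine gap that the fallback does not close. The paper's proof instead considers simultaneously all inclusion-wise maximal subsets $W$ of $Y\setminus V(G_1)$ satisfying $\parcondxp{W}{P_W}{G_1}{M}{G}$ for \emph{some} $M$-ear $P_W$ relative to $G_1$ -- crucially allowing a different ear for each $W$ -- forms the union $X'$ of these with $V(G_1)$, proves $X'$ is critical-inducing via Lemmas~\ref{lem:compclosure} and~\ref{lem:union}, and shows $X' = Y$ by taking, if $X'\subsetneq Y$, the ear $Q$ from Lemma~\ref{lem:inductive-ear} and extending one of the maximal $W$'s through a case analysis (both ends of $Q$ in one member of $\mathcal{Y}\cup\{V(G_1)\}$, or ends in two distinct members with the forwarding paths from Lemma~\ref{lem:int2root} either disjoint or intersecting). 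Your version fixes $P$ at the first application of Lemma~\ref{lem:inductive-ear} and never revisits that choice; when the next ear $Q$ has both ends in $V(G_1)$ -- precisely your unfavorable case -- the natural move is to start a new branch with $Q$ as its ear, but with $P$ frozen there is no reason for $V(G_2)$ to lie in the branch of $Y$ reachable from $\earint{P}$, and the process can stall with $V(G_2)\not\subseteq X$.

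The proposed fallback also breaks at two points. First, the lifted $M$-forwarding path $\Pi_v$ from $v\in V(G_2)\setminus X$ need not intersect $\earint{P}$ at all: Lemma~\ref{lem:path2root} only guarantees $\Pi_v$ terminates in $V(G_1)$, and if $G_2$ attaches to $G_1$ through an ear disjoint from $P$, no vertex $p_i\in V(P)$ exists and the parity argument has nothing to act on. Second, even when $\Pi_v$ does hit $\earint{P}$ at $p_i$ with $i$ even, you have produced a witness path only for $v=p_0$; absorbing the other new vertices $p_1,\ldots,p_{i-1}$ requires verifying $\Psi$ for each of them, and for odd $j<i$ the suffix $p_j\cdots p_i$ begins with a non-$M$ edge, so it is not $M$-forwarding from $p_j$. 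You also abandon the ear $Q$ without exploiting it; the missing mechanism -- and the crux of the paper's argument -- is to concatenate the ends of $Q$ with the $M$-forwarding paths guaranteed by Lemma~\ref{lem:int2root} to form a new $M$-ear $\hat Q$ relative to $V(G_1)$ (handling separately the subcase where those two paths intersect), and then apply Lemma~\ref{lem:extension}\ref{item:extension:ground} to $\hat Q$ to contradict maximality.
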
 
\begin{proof} 
Let $X\subseteq V(G)$ be a critical-inducing set for $G_1$ to $G_2$. 
Define a family $\mathcal{Y}\subseteq 2^{X\setminus V(G_1)}$ as follows: 
A set of vertices $W$ is a member of $\mathcal{Y}$ 
if $W$ is a (inclusion-wise) maximal subset of $X\setminus V(G_1)$ 
that satisfies $\parcondxp{W}{P}{G_1}{M}{G}$ for some $M$-ear $P$ relative to $G_1$.

Let $X' :=  V(G_1) \cup \bigcup_{W \in\mathcal{Y}} W  = \bigcup_{W \in\mathcal{Y}} V(G_1) \cup W$. 
Lemma~\ref{lem:compclosure} implies that, for  each  $W \in \mathcal{Y}$, 
$V(G_1) \cup W$ is a critical-inducing set for $G_1$. 
Accordingly, from Lemma~\ref{lem:union}, $X'$ is also a critical-inducing set for $G_1$.

We prove this lemma by showing 
 $V(G_2)\subseteq X'$. 
Suppose the contrary.  
Then,  $X' \subsetneq X$ holds. 
From Lemma~\ref{lem:inductive-ear}, 
there exists an $M$-ear $Q$ relative to $X'$ 
such that $\earint{Q}\neq \emptyset$ and $\earint{Q}\subseteq X\setminus X'$ hold. 
In the following, note that  
if any $W\subseteq X\setminus V(G_1)$  with  $W \cap \earint{Q} \neq \emptyset$ 
satisfies $\parcondxp{W}{P}{G_1}{M}{G}$ for some $M$-ear $P$, 
then it contradicts the definition of $\mathcal{Y}$ under the current hypothesis.    

Let $\mathcal{Y}^* := \mathcal{Y} \cup \{V(G_1)\}$. 
First consider the case where both ends of $Q$ 
are contained in the same member of $\mathcal{Y}^*$, say,  $W$. 
If $W\in \mathcal{Y}$ holds, then, from Lemma~\ref{lem:extension}\ref{item:extension:extension}, $W \cup \earint{Q}$ is a set of vertices that satisfies  
$\parcondxp{W \cup \earint{Q}}{P}{G_1}{M}{G}$, where $P$ is an $M$-ear with $\parcondxp{W}{P}{G_1}{M}{G}$; 
otherwise, that is, if $W = V(G_1)$, then Lemma~\ref{lem:extension}\ref{item:extension:ground} implies  $\parcondxp{\earint{Q}}{Q}{G_1}{M}{G}$. 
This is a contradiction. 

Next consider the case where 
two ends $u_1$ and $u_2$  of $Q$ are contained in distinct members of $\mathcal{Y}^*$, 
say, $W_1$ and $W_2$, respectively. 
For each $i\in \{1, 2\}$, 
if $W_i$ is a member of $\mathcal{Y}$, 
then let $P_i$ be an $M$-ear relative to $G_1$ 
such that $\parcondxp{W_i}{P_i}{G_1}{M}{G}$ holds;  
according to  Lemma~\ref{lem:int2root}, 
there exists an $M$-forwarding path $R_i$ from $u_i$ 
to an end of $P_i$,  say, $r_i$.  
Otherwise, that is, if $W_i = V(G_1)$, 
 let $R_i$ be the trivial $M$-forwarding path that consists solely of $u_i$, 
and let $r_i := u_i$.

If $(V(R_1)\setminus \{r_1\}) \cap (V(R_2)\setminus \{r_2\}) = \emptyset$, 
then let $\hat{Q} := R_1 + Q + R_2$.   
 From Lemma~\ref{lem:extension}, 
 $\hat{Q}$ is an $M$-ear relative to $G_1$ 
  such that  $\parcondxp{\earint{\hat{Q}}}{\hat{Q}}{G_1}{M}{G}$ holds, 
which is a contradiction. 

Otherwise, that is, if $(V(R_1)\setminus \{u_1\}) \cap (V(R_2)\setminus \{u_2\}) \neq \emptyset$, then 
we have $W_1, W_2 \in \mathcal{Y}$. 
Trace $R_2$ from $u_2$, and let $x$ be the first vertex we encounter that is in $W_1$. 
Then, from Lemma~\ref{lem:cut2forwarding}, 
$u_2R_2x$ is an $M$-forwarding path from $u_2$ to $x$, 
and therefore $Q + u_2R_2x$ is an $M$-ear relative to $W_1$. 
Thus, this case is reduced to the first case, 
and we are again lead to a contradiction. 

Hence, we obtain $X = X'$, and therefore $V(G_2)$ is contained in some member 
of $\mathcal{Y}$. 
This completes the proof. 
\qed
\end{proof}

We can finally prove that $\yield$ is a partial order. 
\begin{theorem}\label{thm:order}
For any  graph $G$, 
the binary relation $\yield$ is a partial order on $\mathcal{G}(G)$. 
\end{theorem}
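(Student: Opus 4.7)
The plan is to verify the three defining properties of a partial order in turn. Reflexivity is immediate: for any $G_1 \in \mathcal{G}(G)$, taking $X = V(G_1)$ makes $G[X]/G_1$ a single vertex, which is trivially factor-critical, so $G_1 \yield G_1$. Transitivity is exactly the content of Lemma~\ref{lem:transitivity}, so nothing more needs to be said there.

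The substantive step is antisymmetry. I would argue by contradiction: suppose $G_1 \yield G_2$ and $G_2 \yield G_1$ with $G_1 \neq G_2$. First, two applications of Lemma~\ref{lem:order2const} together with Proposition~\ref{prop:fcomp2dac} force both $G_1$ and $G_2$ to be consistent, so in particular $V(G_1) \cap D(G) = \emptyset$. Next, fix any maximum matching $M$ and apply Lemma~\ref{lem:order} to $G_1 \yield G_2$: this yields a set $X \supseteq V(G_2)$ and an $M$-ear $P$ relative to $V(G_1)$ with $\parcondxp{X}{P}{G_1}{M}{G}$ satisfied. Invoking Lemma~\ref{lem:int2root} on any chosen vertex $x \in V(G_2)$ produces an end $w \in V(G_1)$ of $P$ and an $M$-forwarding path $R$ from $x$ to $w$.

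The key observation I would then isolate is that this $w$ must be $M$-exposed, which will contradict $V(G_1) \cap D(G) = \emptyset$. For this, note that because $P$ is an $M$-ear, both edges of $P$ incident to the end $w$ are non-$M$ edges (otherwise the interior of $P$ would fail to be $M$-saturated at the vertex adjacent to $w$). The path $R$ supplied by Lemma~\ref{lem:int2root} has the form $Q + yPw$ where $yPw \subseteq P$, so the last edge of $R$ at $w$ is one of those non-$M$ edges. If $w$ had any $M$-edge $f$ outside $R$, then the matching $M \triangle E(R)$ would cover $w$ twice (via $f$ and via the flipped last edge of $R$), which is impossible. Hence $w$ has no $M$-edge at all, so $w \in D(G)$, giving the contradiction.

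The main obstacle in this proof has, in fact, already been overcome: it lies in Lemma~\ref{lem:order}, whose proof packages exactly the structural information (an explicit ear together with the $\Psi$ property extending across $V(G_2)$) that antisymmetry requires. Once that lemma is in hand, the antisymmetry argument reduces to the short $M$-alternating-path analysis above, and the theorem follows.
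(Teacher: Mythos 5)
The reflexivity and transitivity parts are fine, and your reduction of antisymmetry to Lemma~\ref{lem:order} is the right starting point, but the ``key observation'' is false and its justification does not hold. You claim that the end $w\in V(G_1)$ of the $M$-forwarding path $R$ must be $M$-exposed, arguing that otherwise ``the matching $M\triangle E(R)$ would cover $w$ twice, which is impossible.'' The symmetric difference $M\triangle E(R)$ is not automatically a matching: it is a matching precisely when $R$ is an $M$-alternating path whose end $w$ is not matched by $M$ outside $R$. Assuming it is a matching begs exactly the question you are trying to settle. In fact $w$ \emph{is} matched by $M$ outside $R$: since $G_1\yield G_2$ and $G_1\neq G_2$ force $G_1$ to be consistent (Lemma~\ref{lem:order2const}), we have $V(G_1)\subseteq C(G)$, so $w$ is covered by $M$ by an edge $wz$ with $z\in V(G_1)$; but Lemma~\ref{lem:int2root} gives $V(R)\setminus\{w\}\subseteq X$ and $X$ is disjoint from $V(G_1)$ by the definition of $\Psi$, so $wz\notin E(R)$. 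Thus $M\triangle E(R)$ simply fails to be a matching, and no contradiction with $V(G_1)\cap D(G)=\emptyset$ arises.

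A further symptom that the argument is too weak is that it uses $G_2\yield G_1$ only to conclude $G_1$ is consistent; the structural content of that hypothesis (a critical-inducing set for $G_2$ containing $G_1$) is never exploited. The paper's antisymmetry proof does use it: from $G_2\yield G_1$ and Lemma~\ref{lem:path2base} it extracts an $M$-forwarding path $Q$ from an end $u_1$ of the ear $P_1$ to a vertex $w\in V(G_2)$, then, together with the path $R$ from $\Psi(X,P_1;G_1,M,G)$ into the interior of $P_1$ and an alternating path $L$ inside $G_1$ supplied by Proposition~\ref{prop:nonpositive}, splices these pieces into an $M$-alternating circuit through the non-allowed edges of $\delta(G_1)$, contradicting Lemma~\ref{lem:allowed}. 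This circuit-building case analysis (keyed on parities along $P_1$ and $Q$) is the step your proposal is missing.
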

\begin{proof}
Reflexivity is obvious from the definition.
Transitivity follows from Lemma~\ref{lem:transitivity}.
Hence, we  prove  antisymmetry. 
Let $G_1, G_2\in \mathcal{G}(G)$ be factor-components with $G_1\yield G_2$ and $G_2\yield G_1$. 
Suppose  antisymmetry fails, that is,  $G_1 \neq G_2$ holds. 
Note that, from Lemma~\ref{lem:order2const}, $G_1$ is consistent.   
Let $M$ be a maximum matching of $G$. 
From Lemma~\ref{lem:order}, 
there exist a set of vertices $X\subseteq V(G)$ with $V(G_2)\subseteq X$ 
and an $M$-ear $P_1$ relative to $G_1$ 
that satisfy $\parcondxp{X}{P_1}{G_1}{M}{G}$. 
Let $u_1$ and $v_1$ be the ends of $P_1$.

By Lemma~\ref{lem:path2base}, 
there exists $w\in V(G_2)$ such that   
there is an $M$-forwarding path $Q$ from $u_1$ to $w$. 
Trace $Q$ from $u_1$, and let $x$ be the first vertex in $(X\cup \{v_1\})\setminus \{u_1\}$ that we encounter; 
such a vertex  exists because $V(G_2)\subseteq X$ holds. 
\begin{cclaim}
Without loss of generality, we can assume that  $x\neq v_1$, that is, $x\in X$ holds 
and $u_1Qx$ is a path with $v_1\not\in V(u_1Qx)\setminus\{u_1\}$, 
 which is $M$-forwarding from $u_1$ to $x$. 
\end{cclaim}
\begin{proof}
Suppose the claim fails, that is, $x = v_1$ holds.  
Then, $u_1\neq v_1$ holds by the definition of $x$.  
If $u_1Qv_1$ is an $M$-saturated path, 
then $P_1 + u_1Qv_1$ forms an $M$-alternating circuit that contains the non-allowed edges 
in $E(P_1) \cap \delta(G_1)$,   
which  contradicts  Lemma~\ref{lem:allowed}.  
Otherwise, that is, if $u_1Qv_1$ is an $M$-forwarding path from $u_1$ to $v_1$,  
then $v_1Qw$ is an $M$-forwarding path from $v_1$ to $w$ that is   disjoint from $u_1$.  
Redefine $x$  as the first vertex in $X$ that we encounter if we trace $v_1Qw$ from $v_1$. 
Then, $v_1Qx$ is a path that is disjoint from $u_1$ and is $M$-forwarding from $v_1$ to $x$,  
according to Lemma~\ref{lem:cut2forwarding}.  
Therefore, by swapping the roles of $u_1$ and $v_1$, 
without loss of generality, 
we obtain this claim. 
\qed
\end{proof} 

Therefore, hereafter let $x \in  X$ and let  
$u_1Qx$ be an $M$-forwarding path from $u_1$ to $x$ with $v_1\not\in V(u_1Qx)\setminus\{u_1\}$.

As $x\in X$ holds, $\parcondxp{X}{P}{G_1}{M}{G}$ implies that 
there is an $M$-forwarding path $R$ from $x$ to an internal vertex of $P_1$, say, $y$, 
such that $V(R)\subseteq X$ and $V(R)\cap V(P_1) = \{y\}$.

If $u_1P_1y$ has an even number of edges, 
then $u_1Qx + xRy + yP_1u_1$ is an $M$-alternating circuit that contains some non-allowed edges, 
say, the edges in $E(P_1)\cap \delta(u_1)$, 
which contradicts  Lemma~\ref{lem:allowed}. 

Hence hereafter we assume that  $u_1P_1y$ has an odd number of edges.
From Proposition~\ref{prop:nonpositive}, 
there is a path $L$ of $G_1$ that is $M$-saturated between $v_1$ and $u_1$ 
or $M$-forwarding from $v_1$ to $u_1$.  
Trace $L$ from $v_1$,  and let $z$ be the first vertex on $u_1Qx$; 
note that Lemma~\ref{lem:cut2forwarding} implies that $v_1Lz$ is an $M$-forwarding path from $v_1$ to $z$. 
Additionally, note that $L$ is disjoint from $X$,  
because $V(L)\subseteq V(G_1)$ holds and $X$ is disjoint from $V(G_1)$.  
If $u_1Qz$ has an odd number of edges, then
$zQu_1 + P_1 + v_1Lz$ is an $M$-alternating circuit that contains  non-allowed edges, 
say, the edges in $E(P_1)\cap \delta(G_1)$, which  contradicts  Lemma~\ref{lem:allowed}. 
If  $u_1Qz$ has an even  number of edges,
then $v_1Lz+ zQx + xRy + yP_1u_1$ is an $M$-alternating circuit,
which is again a contradiction.
Thus we obtain $G_1 = G_2$, and the theorem is proved.
\qed
\end{proof}

\begin{remark}
The partially ordered structure determined by $\yield$ is not a generalization 
of the Dulmage-Mendelsohn decomposition. 
We can confirm that $\yield$ is  determined totally unique for a  graph, 
whereas the partial order for the Dulmage-Mendelsohn decomposition 
depends on the choice of color classes. 
If a graph $G$ is bipartite, 
then the poset $(\mathcal{G}(G), \yield)$ 
has a trivial structure.   
In our next work~\cite{kita2012c}, 
we give a generalization of the Dulmage-Mendelsohn decomposition 
using the results in this paper. 
\end{remark}

\begin{remark} 
From Lemma~\ref{lem:order2const}, 
any inconsistent factor-component is minimal with respect to $\yield$.  
\end{remark} 

We call the partial order $\yield$  the {\em basilica order}. 


\section{Generalization of the Kotzig-Lov\'asz decomposition} \label{sec:part}

In this section, we give a generalization of the Kotzig-Lov\'asz decomposition 
for general  graphs. 
Given a graph $G$, 
the {\em deficiency} of $G$ is the number $|V(G)|-2\nu(G)$ 
and is denoted by $\deficiency{G}$, 
where $\nu(G)$ is the size of a maximum matching. 
That is, $\deficiency{G}$ is the number of vertices exposed by a maximum matching.   
Note that $\deficiency{G} = 0$ if and only if $G$ is factorizable. 
\begin{definition} 
Let $G$ be a graph. 
For $u,v\in V(G)$,  
we say $u\gsim{G} v$ 
if $u = v$ holds or 
if $u$ and $v$ are contained in the same factor-component 
and $\deficiency{G-u-v} > \deficiency{G}$ holds. 
\end{definition}

By  definition, if a  graph $G$ is consistently factor-connected then 
the binary relation $\gsim{G}$ coincides with $\sim$ given by Kotzig~\cite{kotzig1959a, kotzig1959b, kotzig1960}. 
We prove in Theorem~\ref{thm:generalizedcanonicalpartition} that $\gsim{G}$ is an equivalence relation. 
Lemmas~\ref{lem:d2single} to \ref{lem:a2sim} in the following are used to prove Theorem~\ref{thm:generalizedcanonicalpartition}. These lemmas relate the deficiency and the Gallai-Edmonds family, 
and can be observed more easily from the Gallai-Edmonds structure theorem. 
However, we prove them without using the Gallai-Edmonds structure theorem 
 to keep our results  self-contained.  

The next lemma  implies that each vertex in $D(G)$ forms an equivalence class that is a singleton. 
\begin{lemma} \label{lem:d2single}
Let $G$ be a graph, and let $u\in D(G)$. 
Then, for any $v\in V(G)\setminus \{u\}$, 
$\deficiency{G-u-v} \le \deficiency{G}$ holds. 
\end{lemma}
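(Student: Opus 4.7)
The plan is to use the defining property of $D(G)$ directly, namely that some maximum matching exposes $u$, and then modify it slightly to get a matching of $G-u-v$ whose exposed vertex count is at most $\deficiency{G}$. The key general observation is that if $M'$ is any matching of a graph $H$ exposing $k$ vertices, then $\deficiency{H} \le k$, simply because $\deficiency{H} = |V(H)| - 2\nu(H) \le |V(H)| - 2|M'| = k$.

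First, since $u \in D(G)$, fix a maximum matching $M$ of $G$ that exposes $u$. Let $S$ denote the set of vertices exposed by $M$ in $G$, so $|S| = \deficiency{G}$ and $u \in S$. I split into two cases according to how $M$ interacts with $v$.

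In the first case, suppose $v \in S$, i.e.\ $M$ also exposes $v$. Then $M \subseteq E(G-u-v)$, so $M$ itself is a matching of $G-u-v$, and the set of vertices it exposes in $G-u-v$ is exactly $S \setminus \{u,v\}$, of size $\deficiency{G} - 2$. By the general observation above, $\deficiency{G-u-v} \le \deficiency{G} - 2 \le \deficiency{G}$.

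In the second case, $v$ is covered by $M$; let $vw$ be the edge of $M$ incident with $v$. Then $M' := M \setminus \{vw\}$ is a matching of $G-u-v$, and its set of exposed vertices in $G-u-v$ is $(S \setminus \{u\}) \cup \{w\}$, of size $|S| = \deficiency{G}$. Hence once again $\deficiency{G-u-v} \le \deficiency{G}$.

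I expect no real obstacle; the only point that demands a touch of care is cleanly distinguishing ``exposed by the specific matching $M$'' from ``exposed by some maximum matching,'' and keeping track of the bookkeeping of exposed vertices when the edge $vw$ is removed so that $u$ leaves the exposed set and $w$ enters it. No deeper tool (in particular not the Gallai–Edmonds theorem) is needed, which is consistent with the paper's goal of a self-contained development.
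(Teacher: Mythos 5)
Your proof is correct and takes essentially the same approach as the paper: both start from a maximum matching that exposes $u$ and observe that it can be converted into a matching of $G-u-v$ missing at most $\deficiency{G}$ vertices. The paper packages this as $\deficiency{G-u}=\deficiency{G}-1$ combined with the standard fact $|\deficiency{G-u-v}-\deficiency{G-u}|=1$, whereas you construct the witness matching explicitly, which is a bit more self-contained but is the same argument at heart.
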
 

\begin{proof} 
As $u\in D(G)$ holds,   $\deficiency{G-u} = \deficiency{G}-1$. 
Obviously, $|\deficiency{G-u-v} - \deficiency{G-u}| = 1$. 
Hence, $\deficiency{G-u-v} \le \deficiency{G-u} + 1 = \deficiency{G}$. 
\qed
\end{proof}

The next lemma will be used in both Lemma~\ref{lem:a2sim}   and Theorem~\ref{thm:generalizedcanonicalpartition}. 
\begin{lemma}\label{lem:def2saturated} 
Let $G$ be a graph and $M$ be a maximum matching of $G$, 
and let $u, v\in V(G)\setminus D(G)$ be two distinct vertices. 
Then  $\deficiency{G-u-v} \le \deficiency{G}$ holds if and only if 
there exists an $M$-saturated path between $u$ and $v$. 
\end{lemma}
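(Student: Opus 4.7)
The plan is to analyze the structure of the symmetric difference $M \triangle N$ for an appropriate matching $N$ of $G - u - v$. For the forward direction, given an $M$-saturated path $P$ between $u$ and $v$, I would observe that $M \cap E(P)$ is a perfect matching of $P$, so the $M$-edges incident to $u$ and to $v$ both lie on $P$. Hence $N := M \triangle E(P)$ exposes both $u$ and $v$, is a matching contained in $E(G-u-v)$, and a direct count gives $|N| = |M| - 1$. This yields $\nu(G-u-v) \ge \nu(G) - 1$, equivalently $\deficiency{G-u-v} \le \deficiency{G}$.

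For the backward direction, I would let $N$ be a maximum matching of $G-u-v$, viewed also as a matching of $G$; the hypothesis becomes $|N| \ge |M| - 1$. First I would rule out $|N| = |M|$: in that case $N$ would be a maximum matching of $G$ exposing both $u$ and $v$, contradicting $u, v \notin D(G)$. Hence $|N| = |M| - 1$. Next I would consider $M \triangle N$, which decomposes into alternating paths and even cycles. Since $u$ is covered by $M$ but not by $N$, the unique $M$-edge at $u$ lies in $M \triangle N$ and makes $u$ an endpoint of a path $P_u$; similarly $v$ is an endpoint of a path $P_v$. If I can show $P_u = P_v$, then along $P_u$ the edges alternate between $M$ and $N$ starting and ending with $M$-edges at $u$ and $v$, so $M \cap E(P_u)$ would be a perfect matching of $P_u$, making $P_u$ the desired $M$-saturated path.

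To establish $P_u = P_v$ I would argue by contradiction. Calling a path in $M \triangle N$ a \emph{both-$M$ path} if both of its terminal edges lie in $M \setminus N$, and defining a \emph{both-$N$ path} analogously, a standard count on components of $M \triangle N$ shows that $|M| - |N|$ equals the number of both-$M$ paths minus the number of both-$N$ paths, so at least one both-$M$ path must exist here. Any both-$M$ path is an $N$-augmenting path in $G$. If $P_u$ were a both-$M$ path, flipping $N$ along $P_u$ would yield a maximum matching of $G$ still exposing $v$ (since $v \notin V(P_u)$), contradicting $v \notin D(G)$; by symmetry $P_v$ cannot be a both-$M$ path either. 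Hence there would be some other both-$M$ path not containing $u$ or $v$, entirely contained in $G - u - v$, forming an $N$-augmenting path there and contradicting the maximality of $N$ in $G - u - v$. The hardest part will be this final case analysis: the hypothesis $u, v \notin D(G)$ is essential both for excluding $|N| = |M|$ at the outset and for driving the augmenting-path contradictions when $P_u$ or $P_v$ is hypothesized to be a both-$M$ path.
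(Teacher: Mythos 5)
Your proof is correct, and the backward direction follows a genuinely different route from the paper's while the forward direction coincides. For sufficiency, the paper works directly with the modified matching $M \setminus \{uu', vv'\}$ of $G-u-v$ (where $uu', vv' \in M$, treating $uv\in M$ separately), shows it is non-maximum under the deficiency hypothesis, extracts a single augmenting path $P$ for it, and classifies the two endpoints of $P$ among the $M$-exposed vertices and $\{u',v'\}$; the first two endpoint patterns are ruled out via maximality of $M$ and via Lemma~\ref{lem:da2path}\ref{item:da2path:d} (since $u,v\notin D(G)$), leaving only the endpoints $u', v'$, after which $P + uu' + vv'$ is the desired path. You instead take an arbitrary maximum matching $N$ of $G-u-v$, pin down $|N| = |M|-1$, and analyze the full component decomposition of $M \triangle N$: the component count forces at least one both-$M$ path, and your case analysis forces it to be the common path $P_u = P_v$. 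The two arguments invoke $u,v\notin D(G)$ in parallel places — the paper to kill the mixed endpoint pattern, you to kill the possibility that $P_u$ or $P_v$ is a both-$M$ path disjoint from the other vertex. The paper's version is more surgical (one explicit augmenting path, no global count over components), whereas yours leans on the standard symmetric-difference machinery; as a small bonus, your argument absorbs the case $uv \in M$ (the single-edge component $\{uv\}$ is already $P_u = P_v$) without needing the paper's separate dispatch of it.
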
 

\begin{proof} 
We first prove the necessity. 
Let $P$ be an $M$-saturated path between $u$ and $v$. 
Then, $M\triangle E(P)$ is a matching of $G-u-v$ that covers any vertex that $M$ covers 
other than $u$ and $v$. 
Hence, $\deficiency{G-u-v} \le \deficiency{G}$ holds. 

Next, we prove the sufficiency. 
If $uv$ is an edge in $M$, then the claim obviously holds. 
Hence, in the following, we assume $uu', vv'\in M$ for some $u', v' \in V(G)\setminus \{u,v\}$. 
Then, $M\setminus \{uu', vv'\}$ is a matching of $G-u-v$ 
but is not maximum, because it exposes the vertices in  $S \cup \{u', v'\}$, 
where $S$ is the set of vertices that $M$ exposes. 
Hence, $G-u-v$ has an $M$-exposed path $P$ whose ends are in $S \cup \{u', v'\}$. 
 If both ends are in $S$, 
 then $M\triangle E(P)$ is a bigger matching of $G$ than $M$, which is a contradiction. 
 If one end $x$ is in $S$ and the other is equal to either $u'$ or $v'$, say, $u'$, 
 then, in $G$, $P + uu'$ is an $M$-forwarding path from $u$ to $x$. 
 This implies  $u\in D(G)$ from Lemma~\ref{lem:da2path} \ref{item:da2path:d}, which is a contradiction. 
 Therefore, the ends of $P$ are $u'$ and $v'$, 
 and $P+ uu' + vv'$ is an $M$-saturated path between $u$ and $v$. 
\qed
\end{proof}

The next lemma  implies that $A(G)\cap V(H)$ forms an equivalence class 
for each $H\in\inconst{G}$. 
\begin{lemma} \label{lem:a2sim}
Let $G$ be a graph, and let $M$ be a maximum matching of $G$.  
For any $u,v\in A(G)$, 
$\deficiency{G-u-v} > \deficiency{G}$ holds. 
\end{lemma}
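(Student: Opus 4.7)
The plan is to invoke Lemma~\ref{lem:def2saturated}: since $u, v \in A(G) \subseteq V(G) \setminus D(G)$, the desired inequality $\deficiency{G-u-v} > \deficiency{G}$ is equivalent to saying that no $M$-saturated path between $u$ and $v$ exists. I therefore assume for contradiction that there is an $M$-saturated path $P$ between $u$ and $v$, and from it construct a maximum matching $M^{*}$ of $G$ that exposes one of $u, v$; this will place that vertex in $D(G)$, contradicting $u, v \in A(G)$.

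Since $u \in A(G)$, Lemma~\ref{lem:da2path}\ref{item:da2path:a} supplies an $M$-exposed path $Q$ between $u$ and some vertex $s$ exposed by $M$. Because $M \cap E(P)$ is a perfect matching of $V(P)$, the set $V(P)$ is closed with respect to $M$, and consequently $s \notin V(P)$. Tracing $Q$ from $s$, let $w$ be the first vertex encountered in $V(P)$; a parity argument using closedness of $V(P)$ (every $M$-edge step of $Q$ starting outside $V(P)$ stays outside $V(P)$) shows that $w$ is reached from $s$ through an odd number of edges of $Q$, whence the subpath $sQw$ is itself $M$-exposed and satisfies $V(sQw) \cap V(P) = \{w\}$. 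Now I choose a subpath $R$ of $P$ by the location of $w$: set $R := P$ if $w \in \{u, v\}$, and otherwise take $R$ to be whichever of $uPw$, $wPv$ is $M$-saturated (exactly one is, determined by the parity of the position of $w$ along $P$). In every case $R$ is $M$-saturated, has an endpoint in $\{u, v\}$, and the $R$-edge incident to $w$ lies in $M$.

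Define $M^{*} := M \triangle E(R) \triangle E(sQw)$. Because $V(R) \cap V(sQw) = \{w\}$ and, at $w$, the $M$-edge of $R$ is removed by $\triangle E(R)$ while the non-$M$ edge of $sQw$ is added by $\triangle E(sQw)$, no vertex becomes doubly covered, so $M^{*}$ is a matching. An edge count using $|E(R)| - 2|M \cap E(R)| = -1$ (since $R$ is $M$-saturated) and $|E(sQw)| - 2|M \cap E(sQw)| = +1$ (since $sQw$ is $M$-exposed) gives $|M^{*}| = |M|$, so $M^{*}$ is maximum. An inspection at the endpoints of $R$ shows that the endpoint of $R$ lying in $\{u, v\} \setminus V(sQw)$ is exposed by $M^{*}$, which forces $u \in D(G)$ or $v \in D(G)$, the desired contradiction. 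The main technical obstacle is precisely this parity-driven choice of $R$: only for this choice do the two symmetric differences cancel correctly at $w$; once that is arranged, both the cardinality count and the identification of the exposed vertex in $\{u, v\}$ are routine bookkeeping on the alternation structures of $R$ (saturated) and $sQw$ (exposed).
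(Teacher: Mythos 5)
Your proof is correct and follows essentially the same strategy as the paper: supposing an $M$-saturated path $P$ between $u$ and $v$ exists, you combine a carefully chosen subpath of $P$ with the $M$-exposed path supplied by Lemma~\ref{lem:da2path}~\ref{item:da2path:a} to exhibit a maximum matching that exposes a vertex of $A(G)$, contradicting $A(G)\cap D(G)=\emptyset$. The paper reaches the same contradiction more tersely by observing that the concatenated path is $M$-forwarding and invoking Lemma~\ref{lem:da2path}~\ref{item:da2path:d}, whereas you form the symmetric difference explicitly and count edges --- equivalent bookkeeping, and your parity analysis of $sQw$ (identifying it as $M$-exposed) is in fact slightly more careful than the paper's wording. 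One small omission: Lemma~\ref{lem:def2saturated} assumes $u$ and $v$ are distinct, so the case $u=v$ (which the present lemma allows) needs to be dispatched separately, as the paper does in a single sentence; this is trivial since $u\in A(G)$ is covered by every maximum matching, whence $\deficiency{G-u}=\deficiency{G}+1$.
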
 

\begin{proof} 
If $u=v$, then the claim obviously holds. 
Hence, assume $u\neq v$ and  
suppose the claim fails, that is, suppose $\deficiency{G-u-v} \le \deficiency{G}$. 
Then, by Lemma~\ref{lem:def2saturated}, 
there exists an $M$-saturated path $P$ between $u$ and $v$. 
By Lemma~\ref{lem:da2path} \ref{item:da2path:a}, 
there is an $M$-exposed path $Q$ between $u$ and a vertex $x$ exposed by $M$. 
Trace $Q$ from $x$, and let $y$ be the first encountered vertex in $V(P)$. 
Then, $xQy$ is an $M$-forwarding path from $x$ to $y$, 
and, for a vertex $w$ that is equal to either $u$ or $v$, 
the path $xQy + yPw$ is an $M$-forwarding path from $w$ to $x$. 
This implies $w\in D(G)$ according to Lemma~\ref{lem:da2path} \ref{item:da2path:d}, which is a contradiction. 
Hence, we obtain $\deficiency{G-u-v} > \deficiency{G}$. 
\qed
\end{proof}

The next theorem presents our generalization of the Kotzig-Lov\'asz decomposition. 

\begin{theorem} \label{thm:generalizedcanonicalpartition} 
For any  graph $G$, 
the binary relation $\gsim{G}$ is an equivalence relation on $V(G)$. 
\end{theorem}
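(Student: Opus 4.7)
Reflexivity is built into the definition via the clause $u=v$, and symmetry is immediate since both $\deficiency{G-u-v}$ and the condition of lying in a common factor-component are symmetric in their two arguments; the content of the theorem is therefore transitivity. Suppose $u\gsim{G} v$ and $v\gsim{G} w$ and assume that $u,v,w$ are pairwise distinct, as otherwise the conclusion is immediate. Lemma~\ref{lem:d2single} forces each of these vertices to lie outside $D(G)$: were, say, $v\in D(G)$, then $\deficiency{G-v-u}\le\deficiency{G}$ would violate $u\gsim{G} v$, and analogously for $u$ and $w$. The definition of $\gsim{G}$ also places $u,v,w$ in a common factor-component, call it $H$.

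I would then split on whether $H$ is inconsistent or consistent. If $H$ is inconsistent, Proposition~\ref{prop:fcomp2dac} gives $V(H)\subseteq A(G)\cup D(G)$, hence $u,w\in A(G)$, and Lemma~\ref{lem:a2sim} yields $\deficiency{G-u-w}>\deficiency{G}$, so $u\gsim{G} w$ directly. Assume therefore that $H$ is consistent, so that $M_H$ is a perfect matching of $H$ for any maximum matching $M$ of $G$, and suppose for contradiction that $\neg(u\gsim{G} w)$. By Lemma~\ref{lem:def2saturated} there is an $M$-saturated path $P$ between $u$ and $w$; since $M\triangle E(P)$ is again a maximum matching, every edge of $P$ is allowed, which together with $u\in V(H)$ forces $V(P)\subseteq V(H)$. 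Applying Proposition~\ref{prop:nonpositive} inside $H$ to the pair $v,w$ I either obtain an $M$-saturated path between them, contradicting $v\gsim{G} w$ via Lemma~\ref{lem:def2saturated}, or an $M$-forwarding path $Q$ from $v$ to $w$ with $V(Q)\subseteq V(H)$.

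The hard part, and the main obstacle, is to splice $P$ and $Q$. Trace $Q$ from $v$ and let $x$ be the first vertex of $Q$ lying in $V(P)$; such an $x$ exists because $w\in V(Q)\cap V(P)$. I claim that $vQx$ is $M$-forwarding from $v$ to $x$: were $vQx$ instead $M$-saturated, the predecessor of $x$ along $Q$ would be the global $M$-mate of $x$, but $P$ being $M$-saturated forces that $M$-mate to lie in $V(P)$, contradicting the minimality of $x$. Since $P$ is $M$-saturated, the $M$-mate of $x$ lies in $V(P)$; let $y\in\{u,w\}$ be the endpoint of $P$ on the same side of $x$ as this $M$-mate, with the convention that $y$ is the far endpoint of $P$ when $x\in\{u,w\}$. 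Then $xPy$ starts at $x$ with an $M$-edge, and a direct parity check along the $M$-saturated path $P$ confirms that $xPy$ is itself $M$-saturated between $x$ and $y$. The concatenation $vQx+xPy$ is a simple path, because $V(vQx)\cap V(P)=\{x\}$ by the choice of $x$, and it is $M$-alternating because $vQx$ ends at $x$ with a non-$M$ edge while $xPy$ begins at $x$ with an $M$-edge; overall it is $M$-saturated between $v$ and $y$. This contradicts $u\gsim{G} v$ if $y=u$, or $v\gsim{G} w$ if $y=w$, via Lemma~\ref{lem:def2saturated}, completing the proof. The delicate step is the identification of $vQx$ as $M$-forwarding, which hinges on the uniqueness of global $M$-mates combined with the $M$-saturated structure of $P$.
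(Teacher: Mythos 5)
Your proof is correct and follows essentially the same approach as the paper's: reducing to transitivity, splitting on whether the common factor-component $H$ is inconsistent or consistent, invoking Lemmas~\ref{lem:d2single} and~\ref{lem:a2sim} in the former case, and in the latter splicing an $M$-saturated path $P$ (from Lemma~\ref{lem:def2saturated}) with an $M$-forwarding path $Q$ (from Proposition~\ref{prop:nonpositive}) at the first intersection vertex $x$ to produce an $M$-saturated path that contradicts one of the hypotheses. You are slightly more explicit than the paper in verifying that $u,v,w\notin D(G)$ and that $vQx$ must be $M$-forwarding (facts the paper uses implicitly), and you apply Proposition~\ref{prop:nonpositive} to the pair $(v,w)$ rather than $(v,u)$, but the structure and key ideas are identical.
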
 
\begin{proof}
Reflexivity and symmetry obviously hold by definition. 
We  prove  transitivity in the following. 
Let $u, v, w\in V(H)$ be such that $u\gsim{G} v$ and  $v\gsim{G} w$. 
If any two among $u, v, w$ are identical, clearly the claim follows.
Therefore, it suffices to consider the case that they are mutually distinct. 
If $H$ is inconsistent, then, from Lemma~\ref{lem:d2single}, 
$u, v, w \in A(G)$ follows. 
Thus, from Lemma~\ref{lem:a2sim}, $u\gsim{G} w$ is obtained. 
Therefore, in the remainder of this proof, we assume that  $H$ is consistent. 
Suppose that the claim fails, that is, $u\not\gsim{G} w$.
From Lemma~\ref{lem:def2saturated},  there is an $M$-saturated path $P$
between $u$ and $w$.
By Proposition~\ref{prop:nonpositive}, there is an $M$-\zero path $Q$ from $v$ to $u$.
Trace $Q$ from $v$ and let $x$ be the first vertex we encounter  in $V(Q)\cap V(P)$.
If $uPx$ has an odd number of edges, then 
$vQx + xPu$ is an $M$-saturated path between $u$ and $v$, which is a contradiction.
If $uPx$ has an even  number of  edges,
then $xPw$ has an odd number of edges, and by the same argument we have a contradiction.
\qed
\end{proof}

If a graph $G$ is consistently factor-connected, then 
the family of equivalence classes under $\gsim{G}$, that is, $V(G)/\gsim{G}$,   coincides with 
the original Kotzig-Lov\'asz decomposition~\cite{kotzig1959a, kotzig1959b, kotzig1960}.  
Therefore, 
for a general graph $G$, 
we denote $V(G)/\gsim{G}$ by $\gpart{G}$, 
and call it 
the {\em generalized Kotzig-Lov\'asz decomposition} or simply 
the {\em Kotzig-Lov\'asz decomposition}. 
By the definition of $\gsim{G}$, 
 each equivalence class is contained 
in some factor-component. 
Therefore, for each $H\in\mathcal{G}(G)$, 
the family $\{ S\in \gpart{G} : S\subseteq V(H)\}$ is a partition of $V(H)$; 
we denote this partition by $\pargpart{G}{H}$. 

The next statement shows that our generalization of the Kotzig-Lov\'asz decomposition 
provides  information that the classical Kotzig-Lov\'asz decomposition 
does not. 

\begin{observation} \label{prop:refinement} 
For a factorizable graph $G$ and a factor-component $H\in \mathcal{G}(G)$, 
the partition $\pargpart{G}{H}$ is a refinement of $\gpart{H}$; 
that is, 
if two vertices $u, v\in V(H)$ satisfy $u \gsim{G} v$ in $G$, 
then $u\sim v$ holds in $H$. 
\end{observation}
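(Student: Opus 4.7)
The plan is to prove the contrapositive. Suppose $u\neq v$ and $u\not\sim v$ in $H$, i.e., $H-u-v$ is factorizable; I will construct a perfect matching of $G-u-v$. Since $G$ is factorizable, $\deficiency{G}=0$, so exhibiting a perfect matching of $G-u-v$ forces $\deficiency{G-u-v}=0=\deficiency{G}$ and hence $u\not\gsim{G} v$, as desired. The case $u=v$ is immediate from reflexivity of $\sim$.

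The key preliminary observation is that in a factorizable graph $G$ every maximum matching is perfect, so $D(G)=A(G)=\emptyset$ and $V(G)=C(G)$. Proposition~\ref{prop:fcomp2dac} then ensures that every factor-component of $G$ is consistent and therefore itself factorizable. Moreover, every vertex of $G$ lies on some edge of a perfect matching, and every such edge is allowed, so every vertex is incident to an allowed edge; consequently the vertex sets $\{V(K):K\in\mathcal{G}(G)\}$ partition $V(G)$.

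Given these facts, I fix a perfect matching $M'$ of $H-u-v$ and, for each $K\in\mathcal{G}(G)\setminus\{H\}$, an arbitrary perfect matching $M_K$ of $K$. Because the vertex sets of the factor-components are pairwise disjoint and exhaust $V(G)$, the union $M'\cup\bigcup_{K\neq H} M_K$ is a set of pairwise vertex-disjoint edges covering exactly $V(G)\setminus\{u,v\}$, i.e., a perfect matching of $G-u-v$. This completes the contrapositive.

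There is no genuine obstacle here beyond this bookkeeping: once one has the partition of $V(G)$ by factor-components and the factorizability of each component in hand, the desired perfect matching of $G-u-v$ assembles directly from a perfect matching of $H-u-v$ together with arbitrary perfect matchings of the remaining factor-components.
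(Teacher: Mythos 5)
Your proof is correct, and since the paper states this only as an unproved Observation, there is no competing argument to compare against; yours is the natural one. A small remark: most of your ``bookkeeping'' is already available as a stated fact in Section~\ref{sec:def}, namely that a set of edges is a maximum matching of $G$ if and only if it is a disjoint union of maximum matchings taken from each factor-component. Combined with the observation that all factor-components of a factorizable graph are consistent (hence factorizable, with vertex sets partitioning $V(G)$), this immediately yields the perfect matching $M' \cup \bigcup_{K\neq H} M_K$ of $G-u-v$, so you could cite that fact rather than re-derive it from scratch. The contrapositive framing, the handling of the $u=v$ case via reflexivity, and the identification of $\deficiency{G-u-v}=0$ with $u\not\gsim{G}v$ given $\deficiency{G}=0$ are all handled correctly.
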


In general, $\pargpart{G}{H}$ is a proper refinement of $\gpart{H}$. 
Therefore, our generalization of the Kotzig-Lov\'asz decomposition 
is not  trivial; that is, 
$\gpart{G}$ is not merely a  disjoint union of 
the Kotzig-Lov\'asz decomposition of each factor-component.  

Our proof of Theorem~\ref{thm:generalizedcanonicalpartition}  provides 
a shortened and self-contained proof of the classical Kotzig-Lov\'asz decomposition. 
Kotzig's proof consists of three papers, so 
 proving that  $\sim$ is an equivalence relation from first principles  
has been considered challenging~\cite{lp1986}.
Lov\'asz's proof uses the Gallai-Edmonds structure theorem,  
and, accordingly, is not self-contained. 
However, in fact, it can be proved in a  simple  way
even without the premise of the Gallai-Edmonds structure theorem  or the notion of barriers. 
All the results used to obtain Theorem~\ref{thm:generalizedcanonicalpartition} 
are self-contained in this paper.

Lov\'asz~\cite{lovasz1972b} reformulated the classical Kotzig-Lov\'asz decomposition using the notion of {\rm barriers}~\cite{lp1986}. 
In our next paper~\cite{DBLP:conf/cocoa/Kita13, kita2012canonical}, 
we discuss the relationship between barriers and our generalized Kotzig-Lov\'asz decomposition, 
and show that our decomposition also provides a generalization of Lov\'asz's formulation. 

The next observation follows from Lemmas~\ref{lem:d2single} and \ref{lem:a2sim}. 
\begin{observation} \label{note:inconstpart}
Let $G$ be a graph, and let $H\in\inconst{G}$. 
Then, $\pargpart{G}{H} = \{A(G)\cap V(H)\} \cup \bigcup\{ \{x\} : x\in D(G)\cap V(H)\}$. 
\end{observation}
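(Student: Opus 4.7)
The plan is to combine Proposition~\ref{prop:fcomp2dac} with Lemmas~\ref{lem:d2single} and \ref{lem:a2sim}, which already handle the two sides of the splitting between $D(G)$ and $A(G)$. First I would use Proposition~\ref{prop:fcomp2dac} to observe that an inconsistent factor-component $H$ is a factor-component of $G[A(G)\cup D(G)]$, hence $V(H) \subseteq A(G) \cup D(G)$. This means every vertex of $H$ falls into exactly one of the two sets $A(G)\cap V(H)$ or $D(G)\cap V(H)$, so it suffices to check that these are precisely the equivalence classes of $\gsim{G}$ restricted to $V(H)$.

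Next I would handle the $D(G)$ part. For any $u \in D(G)\cap V(H)$ and any $v \in V(G)\setminus\{u\}$, Lemma~\ref{lem:d2single} gives $\deficiency{G-u-v}\le \deficiency{G}$. By the definition of $\gsim{G}$, this rules out $u\gsim{G} v$ for every $v\neq u$. Therefore $\{u\}$ is an equivalence class of $\gsim{G}$, and this accounts for each summand $\{x\}$ with $x\in D(G)\cap V(H)$ in the claimed partition.

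For the $A(G)$ part, I would apply Lemma~\ref{lem:a2sim}: for any $u,v\in A(G)\cap V(H)$ we have $\deficiency{G-u-v}>\deficiency{G}$. Since $u$ and $v$ lie in the same factor-component $H$, the definition of $\gsim{G}$ yields $u\gsim{G} v$. Thus $A(G)\cap V(H)$ is entirely contained in one equivalence class. But every vertex of $H$ not in $A(G)$ is in $D(G)\cap V(H)$, and by the previous paragraph each such vertex is its own singleton class, so no vertex outside $A(G)\cap V(H)$ can be equivalent to a vertex inside it. Hence $A(G)\cap V(H)$ is exactly one equivalence class, which completes the description of $\pargpart{G}{H}$.

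There is no real obstacle here: the observation is a direct corollary of the two deficiency lemmas together with the $D(G)\cup A(G)$ containment coming from Proposition~\ref{prop:fcomp2dac}. The only subtlety worth flagging is that one must verify both directions of the partition—using Lemma~\ref{lem:d2single} to prevent $D$-vertices from being absorbed into the $A$-class, and using Lemma~\ref{lem:a2sim} to ensure the $A$-vertices do not split further. A minor edge case to mention is when $A(G)\cap V(H) = \emptyset$, in which case the union in the statement is understood to omit that block; this causes no issue since every vertex of $V(H)$ is then a singleton in $D(G)\cap V(H)$.
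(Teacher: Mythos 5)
Your proof is correct and follows the same route the paper indicates: it states that the observation follows from Lemmas~\ref{lem:d2single} and \ref{lem:a2sim}, which is exactly what you carry out, with Proposition~\ref{prop:fcomp2dac} supplying the (implicit in the paper) fact that $V(H)\subseteq A(G)\cup D(G)$. Your remark about the edge case $A(G)\cap V(H)=\emptyset$ is a fair point of hygiene but does not change the substance.
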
 
\begin{observation} 
Let $G$ be a graph. 
For a factor-component $H\in\comp{G}$, 
$\pargpart{G}{H}$ consists of only a single member
if and only if $| V(H) | = 1$, which implies that its only vertex is in $D(G)$.  
\end{observation}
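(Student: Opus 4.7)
The plan is to handle the biconditional by splitting into its two directions and then to settle the addendum separately. The ``if'' direction is essentially immediate, so the real content lies in the ``only if'' direction, which I would break up according to whether $H$ is consistent or inconsistent.

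For the direction $|V(H)|=1\Rightarrow$ single class, the vertex set itself is the unique possible block of $\pargpart{G}{H}$.

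For the converse, suppose $\pargpart{G}{H}$ consists of a single class. If $H$ is consistent, then $H$ is factorizable, so for any maximum matching $M$ of $G$ the restriction $M_H$ is a perfect matching of $H$. I would then argue that if $|V(H)|\geq 2$ one may pick any edge $uv\in M_H$; since $M\setminus\{uv\}$ is a matching of $G-u-v$ of size $|M|-1$, one obtains $\deficiency{G-u-v}\leq\deficiency{G}$, and therefore $u\not\gsim{G}v$ (as $u\neq v$), contradicting the single-class assumption. This rules out the consistent case entirely. If $H$ is inconsistent, I would invoke Observation~\ref{note:inconstpart} to write
\[ \pargpart{G}{H}=\{A(G)\cap V(H)\}\cup\bigcup\{\{x\}:x\in D(G)\cap V(H)\}, \]
with $A(G)\cap V(H)$ omitted when empty. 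Because $A(G)$ and $D(G)$ are disjoint and $V(H)\cap D(G)\neq\emptyset$ by inconsistency, a single class forces $A(G)\cap V(H)=\emptyset$ and $|D(G)\cap V(H)|=1$, whence $|V(H)|=1$.

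For the addendum, when $|V(H)|=1$ the sole vertex of $H$ has no allowed edge incident to it and so is exposed by every maximum matching; hence it lies in $D(G)$. Equivalently, a single-vertex graph is not factorizable, so $H$ must be inconsistent and therefore meet $D(G)$. No serious obstacle is expected here: the only delicate point is the consistent case, and even there the one-line observation that any edge of the perfect matching of $H$ witnesses a non-equivalent pair disposes of it immediately.
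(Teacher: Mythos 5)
The observation carries no explicit proof in the paper, so there is no argument to compare against; your proof is complete and correct, and it is the natural one. You correctly split on whether $H$ is consistent or inconsistent. In the consistent case, $H$ is factorizable and thus $|V(H)|\ge 2$; picking any $uv\in M_H$ gives $\nu(G-u-v)\ge\nu(G)-1$, hence $\deficiency{G-u-v}\le\deficiency{G}$, so $u\not\gsim{G}v$ and $\pargpart{G}{H}$ has at least two members, making both sides of the equivalence false. In the inconsistent case, Observation~\ref{note:inconstpart} together with $V(H)\cap D(G)\ne\emptyset$ (from inconsistency) forces $A(G)\cap V(H)=\emptyset$ and $|D(G)\cap V(H)|=1$ whenever there is a single class, so $|V(H)|=1$, and conversely. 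The addendum is also right: a factor-component on a single vertex $v$ has no allowed edge incident to $v$, so $v$ is exposed by every maximum matching and hence lies in $D(G)$ (equivalently, a one-vertex graph is not factorizable, so $H$ must be inconsistent and therefore meets $D(G)$). No gaps.
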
 


\begin{remark} 
An alternative way to define $\gsim{G}$ is the following. 
Given a graph $G$, for  any $u,v\in V(G)\setminus D(G)$, 
we say $u\gsim{G} v$ if $u$ and $v$ are contained in the same factor-component 
and $\deficiency{G-u-v} > \deficiency{G}$ holds.  
Obviously, $\gsim{G}$ is also an equivalence relation over $V(G)\setminus D(G)$, 
and its equivalence classes coincide with those given in this section, 
except for the trivial classes over $D(G)$.   
We prefer  this formulation for the generalized Kotzig-Lov\'asz decomposition given 
the nature of matchings shown in our next paper~\cite{DBLP:conf/cocoa/Kita13, kita2012canonical}. 
However, in this paper, we employ the other formulation. 
\end{remark}

\section{Basilica Type Relationship 
and  Definition of New Canonical Decomposition} \label{sec:cor} 
\subsection{Relationship between $\yield$ and $\gsim{G}$} \label{sec:cor:cor} 
There is a relationship between 
 the partial order $\yield$ and the generalized Kotzig-Lov\'asz decomposition,  
 even though they are given independently.
We state this relationship in Theorem~\ref{thm:base}, 
using the following definitions and lemmas. 
\begin{definition} 
Let $G$ be a  graph, and let $H\in\mathcal{G}(G)$. 
We denote by $\parupstar{G}{H}$ the set of upper bounds of $H$ in the poset $(\mathcal{G}(G), \yield)$; 
that is,  
$\parupstar{G}{H} := \{ H'\in\mathcal{G}(G): H\yield H'\}$. 
We define 
$\parup{G}{H} := \parupstar{G}{H}\setminus \{H\}$  
and denote by $\vparupstar{G}{H}$ and  $\vparup{G}{H}$  the sets of vertices that are contained in 
the factor-components in $\parupstar{G}{H}$  and in $\parup{G}{H}$, respectively; 
that is, 
$\vparupstar{G}{H} := \bigcup_{H'\in\parupstar{G}{H}} V(H')$ and 
$\vparup{G}{H} := \bigcup_{H'\in\parup{G}{H}} V(H')$.  
We often omit the subscripts ``$G$'' 
if they are apparent from the context. 
\end{definition} 

\begin{lemma}\label{lem:nonrefinable} 
Let $G$ be a  graph and $M$ be a maximum matching of $G$, 
and let $G_1, G_2 \in \mathcal{G}(G)$ be distinct factor-components.  
If there exists an $M$-ear $P$ with $\earint{P} \subseteq C(G)$ that is relative to $G_1$ and traverses $G_2$, 
then $G_1\yield G_2$ holds. 
Accordingly, any factor-component traversed by $P$ is an upper bound of $G_1$. 
\end{lemma}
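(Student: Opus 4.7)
The plan is to reduce the claim directly to Lemmas~\ref{lem:extension} and \ref{lem:compclosure} applied with $X_0 = V(G_1)$ and base set $X = \earint{P}$. By Lemma~\ref{lem:extension}\ref{item:extension:ground}, the $M$-ear $P$ relative to $V(G_1)$ automatically satisfies $\parcondxp{\earint{P}}{P}{V(G_1)}{M}{G}$, because each internal vertex $y$ of $P$ gives a trivial one-vertex $M$-forwarding path in $\pathfamily{y}{\earint{P}}{P}{V(G_1)}{M}{G}$.

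Next I would feed this into Lemma~\ref{lem:compclosure}. The hypothesis $\earint{P}\subseteq C(G)$ is exactly the requirement of that lemma, and $V(G_1)$ is separating because $G_1\in \mathcal{G}(G)$. Hence setting
\[
X^{*} \;:=\; \earint{P} \,\cup\, \bigcup\{\,V(H) : H\in\mathcal{G}(G),\ V(H)\cap \earint{P}\neq\emptyset\,\},
\]
Lemma~\ref{lem:compclosure} yields that $X^{*}\cup V(G_1)$ is a critical-inducing set for $G_1$.

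To finish, I would verify that $V(G_2)\subseteq X^{*}\cup V(G_1)$. Since $P$ traverses $G_2$, we have $\earint{P}\cap V(G_2)\neq\emptyset$, so $G_2$ is one of the factor-components contributing to $X^{*}$; thus $V(G_2)\subseteq X^{*}$ and so $X^{*}\cup V(G_1)$ is a critical-inducing set for $G_1$ to $G_2$, giving $G_1\yield G_2$. The same argument applies verbatim to any factor-component $H'$ traversed by $P$, yielding $G_1\yield H'$.

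The only subtlety worth checking is that no factor-component $H$ meeting $\earint{P}$ is equal to $G_1$: since $P$ is an ear relative to $G_1$, its internal vertices are disjoint from $V(G_1)$, so any such $H$ has $V(H)\cap V(G_1)=\emptyset$ (factor-components partition the vertex set) and is therefore distinct from $G_1$. Beyond this brief bookkeeping check, all the substantive work is already encapsulated in the preceding lemmas, so the main obstacle is really just ensuring the set-up conditions of Lemma~\ref{lem:compclosure} are met — which they are, directly from the hypotheses of the statement.
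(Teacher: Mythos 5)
Your proposal is correct and follows essentially the same route as the paper: establish $\parcondxp{\earint{P}}{P}{G_1}{M}{G}$ from the ground case and then apply Lemma~\ref{lem:compclosure} to obtain a critical-inducing set for $G_1$ that contains every factor-component that $P$ traverses. In fact your citation of Lemma~\ref{lem:extension}\ref{item:extension:ground} for the ground step is more precise than the paper's reference to Lemma~\ref{lem:inductive-ear}, which appears to be a slip.
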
 
\begin{proof} 
As stated in Lemma~\ref{lem:inductive-ear}, $\parcondxp{\earint{P}}{P}{G_1}{M}{G}$ holds. 
Thus, from Lemma~\ref{lem:compclosure}, using $\earint{P}$, 
we can construct a critical-inducing set for $G_1$ to $G_2$. 
Thus, $G_1\yield G_2$ holds, and accordingly the remaining statement is also obtained.  
\qed
\end{proof}

\begin{lemma}\label{lem:ear-base}
Let $G$ be a  graph and $M$ be a  matching of $G$,
and let $H\in \mathcal{G}(G)$.
Let $P$ be an $M$-ear relative to $H$ with end vertices 
$u, v \in V(H)$ and with $\earint{P}\subseteq C(G)$.
Then $u\gsim{G} v$ holds.
\end{lemma}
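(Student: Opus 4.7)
The plan splits by whether $u=v$ and by the consistency of $H$, taking it as implicit that $\earint{P}\neq\emptyset$ (otherwise $P$ is a single chord of $H$ and the conclusion can fail in general). The case $u=v$ follows immediately from the definition of $\gsim{G}$.

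For $H$ inconsistent, Proposition~\ref{prop:fcomp2dac} gives $V(H)\subseteq A(G)\cup D(G)$. Because $u$ is joined to the first interior vertex $w_{1}\in\earint{P}\subseteq C(G)$ and no edge can join $D(G)$ to $C(G)$ (else a $C(G)$-vertex would lie in $\Gamma(D(G))=A(G)$), we obtain $u\notin D(G)$ and hence $u\in A(G)$; the symmetric argument at the other end of $P$ gives $v\in A(G)$. Lemma~\ref{lem:a2sim} then yields $u\gsim{G}v$ directly.

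The principal case is $H$ consistent with $u\neq v$, where both $u,v\in V(H)\subseteq C(G)$ lie outside $D(G)$, so Lemma~\ref{lem:def2saturated} is applicable. I suppose for contradiction that $\deficiency{G-u-v}\le\deficiency{G}$, so that for every maximum matching $M^{\ast}$ of $G$ there is an $M^{\ast}$-saturated path $Q$ in $G$ between $u$ and $v$. Since $V(P)\subseteq C(G)$, the ear $P$ lies entirely inside the factorizable subgraph $G^{c}:=G[C(G)]$, whose factor-components are precisely the consistent factor-components of $G$ by Proposition~\ref{prop:fcomp2dac}. The goal is to exhibit an $M^{\ast}$-alternating circuit in $G^{c}$ containing the edge $uw_{1}$, which is not allowed in $G^{c}$ because it joins two distinct factor-components of $G^{c}$, contradicting Lemma~\ref{lem:allowed} applied to $G^{c}$.

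To build that circuit, I would transform $P$ into an $M^{\ast}$-alternating ear $\tilde{P}$ relative to $H$ with the same endpoints by working one traversed factor-component at a time: inside each consistent factor-component $H^{(i)}$ visited by $\earint{P}$, Proposition~\ref{prop:nonpositive} applied to $H^{(i)}$ with its perfect matching $M^{\ast}_{H^{(i)}}$ supplies an $M^{\ast}$-alternating subpath between the entry and exit vertices of the $P$-arc in $H^{(i)}$; the inter-component edges of $P$ are automatically not in $M^{\ast}$ (being non-allowed in $G^{c}$). Splicing $\tilde{P}$ with $Q$ at a first intersection, via Lemma~\ref{lem:cut2forwarding} and in the same style as the proof of Proposition~\ref{prop:nonpositive}, produces the desired $M^{\ast}$-alternating circuit through $uw_{1}$. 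The main technical obstacle is parity alignment in this construction: Proposition~\ref{prop:nonpositive} offers either an $M^{\ast}$-saturated or an $M^{\ast}$-forwarding replacement subpath inside each $H^{(i)}$, and one must verify that the parity forced by the crossings selects the saturated alternative, or reroute in the forwarding case without losing the non-allowed edge that drives the contradiction.
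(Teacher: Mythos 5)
Your treatment of the $u=v$ case and the inconsistent case agrees in substance with the paper, and for consistent $H$ your overall plan (suppose $\deficiency{G-u-v}\le\deficiency{G}$, obtain a saturated path $Q$, splice with $P$ to get an alternating circuit through a non-allowed cut edge of $H$, contradict Lemma~\ref{lem:allowed}) is the right shape. Two remarks, though: the detour through a fresh maximum matching $M^{\ast}$ and the ``transformation'' of $P$ into an $M^{\ast}$-ear is a red herring --- the lemma is used in the paper with $M$ a maximum matching, so $P$ and the $M$-saturated path from Lemma~\ref{lem:def2saturated} are already alternating with respect to the \emph{same} matching and can be spliced as is. Relatedly, restricting attention to $G[C(G)]$ is not free (the saturated path $Q$ need not stay inside $C(G)$), whereas the paper argues directly in $G$, where the circuit-implies-allowed direction of Lemma~\ref{lem:allowed} still applies for a maximum matching.

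The genuine gap is exactly the parity obstruction you flag at the end, and it cannot be resolved by ``rerouting''. Write $x$ for the first vertex of $V(P)\setminus\{u\}$ met when tracing $Q$ from $u$. If $uPx$ has an even number of edges, the $M$-edge of $P$ at $x$ points toward $u$; since $x$ can have only one $M$-mate and that mate then lies in $V(P)$ while the corresponding $Q$-neighbor toward $u$ does not, the $M$-edge of $Q$ at $x$ must point toward $v$, so $uQx+xPu$ really is $M$-alternating and the contradiction via Lemma~\ref{lem:allowed} goes through. But if $uPx$ has an odd number of edges and $uQx$ an even number, the closed walk $uQx + xPu$ has \emph{both} edges at $x$ outside $M$; there is no alternating circuit through the non-allowed edge, and Proposition~\ref{prop:nonpositive} gives no ``saturated alternative'' you are free to choose. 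The paper's proof handles precisely this case by abandoning the circuit argument: letting $I\in\comp{G}$ be the factor-component containing $x$, the components of $uQx + xPu - E(I)$ are $M$-ears relative to $I$, one of which traverses $H$, so Lemma~\ref{lem:nonrefinable} gives $I\yield H$, contradicting $H\yield I$ by the antisymmetry of $\yield$ (Theorem~\ref{thm:order}). This reliance on the basilica order is not decorative --- it is what closes the stubborn parity case --- and it is the piece your proposal is missing.
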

\begin{proof}
First, note $\earint{P}\subseteq \vup{H}$ according to Lemma~\ref{lem:nonrefinable}. 
Hence, if $H$ is an inconsistent factor-component, then $u, v\in A(G)\cap V(H)$ holds. 
Therefore, we obtain $u\gsim{G} v$ from Lemma~\ref{lem:a2sim}. 

Hence, in the following, assume that $H$ is consistent. 
Suppose the claim fails, that is, $u \not\gsim{G} v$ holds. 
Then, from Lemma~\ref{lem:def2saturated}, there is an $M$-saturated path $Q$ between $u$ and $v$. 
Trace $Q$ from $u$, 
 and let $x$ be the first vertex we encounter that is in $V(P)\setminus \{u\}$. 
 If $x = v$, then $Q + P$ is an $M$-alternating circuit 
 that contains some non-allowed edges of $\parcut{G}{H}$, 
 which contradicts Lemma~\ref{lem:allowed}.   
 Hence, we assume $x\in \earint{P}\setminus \{u\}$ in the following. 
If $uPx$ has an even number of  edges, 
then $uQx + xPu$ is an $M$-alternating circuit with some non-allowed edges of $\parcut{G}{H}$, which
is again a contradiction. 
Hence, we  assume  that $uPx$ has an odd number of  edges.
Let $I\in \mathcal{G}(G)$ be the factor-component that contains $x$.  
The  connected components of $uQx + xPu - E(I)$ 
are  $M$-ears relative to $I$, 
and one of them traverses $H$. 
This implies $I\yield H$ under Lemma~\ref{lem:nonrefinable}, 
which contradicts $H\yield I$ under Theorem~\ref{thm:order}. 
\qed
\end{proof}

\begin{lemma} \label{lem:base} 
Let $G$ be a  graph 
and $M$ be a maximum matching of $G$,  
and let $G_0\in\mathcal{G}(G)$.  
Let $X\subseteq C(G)$ be a set of vertices 
such that $\parcondxp{X}{P}{G_0}{M}{G}$ holds for some $M$-ear $P$ relative to $G_0$.  
Then, 
\begin{enumerate} 
\renewcommand{\labelenumi}{\theenumi}
\renewcommand{\labelenumi}{{\rm \theenumi}}
\renewcommand{\theenumi}{(\roman{enumi})}
\item \label{item:k} 
there exists a connected component $K$ of $G[\vup{G_0}]$ 
with $X \subseteq V(K)$; and, 
\item \label{item:n}
there exists $T\in\pargpart{G}{G_0}$ such that 
$\parNei{G}{X} \cap V(G_0) \subseteq T$ holds. 
\end{enumerate} 
\end{lemma}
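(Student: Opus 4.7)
The plan is to exploit the forwarding paths furnished by $\parcondxp{X}{P}{G_0}{M}{G}$ to build suitable $M$-ears, and then apply Lemma~\ref{lem:compclosure} for part~(i) and Lemma~\ref{lem:ear-base} for part~(ii).

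For (i), I would first show $X \subseteq \vup{G_0}$. For each $x \in X$, let $H \in \mathcal{G}(G)$ be the factor-component containing $x$. By Lemma~\ref{lem:compclosure}, $V(G_0) \cup X^{*}$ is a critical-inducing set for $G_0$, where $X^{*}$ absorbs every factor-component meeting $X$; hence $V(H) \subseteq X^{*}$ and $V(G_0) \cup X^{*}$ becomes a critical-inducing set for $G_0$ to $H$, giving $G_0 \yield H$. Since $X$ is disjoint from $V(G_0)$, we have $H \neq G_0$, so $H \in \parup{G}{G_0}$ and $x \in \vup{G_0}$. For any two $x, x' \in X$, pick $Q \in \pathfamily{x}{X}{P}{G_0}{M}{G}$ and $Q' \in \pathfamily{x'}{X}{P}{G_0}{M}{G}$, ending at $y, y' \in \earint{P}$ respectively. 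All vertices of $Q$, $Q'$, and the subpath of $P$ between $y$ and $y'$ lie in $X \subseteq \vup{G_0}$ (using $\earint{P} \subseteq X$), so we obtain a walk in $G[\vup{G_0}]$ from $x$ to $x'$. Hence $X$ sits inside a single connected component $K$ of $G[\vup{G_0}]$.

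For (ii), I would apply Lemma~\ref{lem:ear-base} to $P$ itself: since $P$ is an $M$-ear relative to $G_0$ with $\earint{P} \subseteq X \subseteq C(G)$, its ends $w_1, w_2 \in V(G_0)$ satisfy $w_1 \gsim{G} w_2$; let $T \in \pargpart{G}{G_0}$ be the Kotzig-Lov\'asz class containing them. For an arbitrary $u \in \parNei{G}{X} \cap V(G_0)$: if $u \in \{w_1, w_2\}$ we are done. Otherwise, pick $x \in X$ adjacent to $u$, take $Q \in \pathfamily{x}{X}{P}{G_0}{M}{G}$ ending at some $y \in \earint{P}$, and let $w_y \in \{w_1, w_2\}$ be the end of $P$ for which $yPw_y$ is $M$-forwarding from $y$. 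Set $L := ux + xQy + yPw_y$. Simplicity follows from $V(Q) \cap V(P) = \{y\}$, the disjointness of $V(Q) \cup \earint{P}$ from $V(G_0)$, and $u \neq w_y$. Since $X$ is closed with respect to $M$, $ux \notin M$; a parity check at the gluing vertex $y$ then shows that $M \cap E(L)$ is a near-perfect matching of $L$ exposing exactly $u$ and $w_y$, so $L$ is an $M$-ear relative to $G_0$ with interior in $X \subseteq C(G)$. Lemma~\ref{lem:ear-base} yields $u \gsim{G} w_y$, and hence $u \in T$.

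The main technical obstacle is verifying that $L$ is indeed an $M$-ear. The crucial observation is at the gluing vertex $y$: the last edge of $xQy$ is not in $M$ because $xQy$ is $M$-forwarding from $x$ to $y$, whereas the first edge of $yPw_y$ is in $M$ by the very choice of $w_y$, so $y$ ends up covered by $M \cap E(L)$. Meanwhile $ux$ and the terminal edge at $w_y$ are both non-$M$, which is precisely what forces $L \setminus V(G_0)$ to be $M$-saturated. Isolating the corner case $u \in \{w_1, w_2\}$ is what keeps $L$ from collapsing into a circuit, on which Lemma~\ref{lem:ear-base} would only yield the trivial $u \gsim{G} u$.
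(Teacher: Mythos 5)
Your proof is correct and follows essentially the same approach as the paper's: Lemma~\ref{lem:compclosure} establishes $X \subseteq \vup{G_0}$ and $\Psi$ gives connectivity for part (i), while part (ii) builds an $M$-ear from $ux$ together with a forwarding path to an end of $P$ and applies Lemma~\ref{lem:ear-base}. The only cosmetic difference is that the paper invokes Lemma~\ref{lem:int2root} to produce the forwarding path from $x$ to an end of $P$ in one step, whereas you re-derive its content explicitly (and helpfully isolate the corner case $u \in \{w_1, w_2\}$, which the paper leaves implicit).
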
 
\begin{proof} 
As Lemma~\ref{lem:compclosure} states that $X$ is contained in a critical-inducing set for $G_0$, 
we have $X\subseteq \vup{G_0}$. 
Additionally, $\parcondxp{X}{P}{G_0}{M}{G}$ implies 
that $G[X]$ is connected. 
Therefore, \ref{item:k} follows. 

Let $u$ and $v$ be the ends of $P$. 
From Lemma~\ref{lem:ear-base}, 
there exists $T\in\pargpart{G}{G_0}$ with $\{u, v\} \subseteq T$. 
Let $w \in \parNei{G}{X}\cap V(G_0)$, and  let $z\in X$ 
be  a vertex  with $wz\in E(G)$. 
From Lemma~\ref{lem:int2root}, 
there exists an $M$-forwarding path $Q$ from $z$ to $r\in \{u, v\}$ 
with $V(Q)\setminus \{r\} \subseteq X$. 
Then, $wz + Q$ forms an $M$-ear relative to $G_0$ 
whose ends are $w$ and $r$. 
Therefore, from Lemma~\ref{lem:ear-base}, 
$w\in T$ follows, and we have \ref{item:n}. 
\qed
\end{proof}

The relationship between the basilica order and 
 generalized Kotzig-Lov\'asz decomposition is shown in the next theorem.   
\begin{theorem}\label{thm:base}
Let $G$ be a  graph, 
and let $G_0\in\mathcal{G}(G)$. 
For each connected component $K$ of $G[\vup{G_0}]$,  
there exists $T_K\in\pargpart{G}{G_0}$ such that 
$\parNei{G}{K}\cap V(G_0)\subseteq T_K$. 
\end{theorem}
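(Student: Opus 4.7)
Fix a maximum matching $M$ of $G$. By the last sentence of Lemma~\ref{lem:order2const}, every factor-component in $\parup{G}{G_0}$ is consistent, so $V(K)\subseteq C(G)$; in particular, each $z\in V(K)$ lies in a consistent factor-component $H_z$ with $G_0\yield H_z$ and $H_z\neq G_0$. Applying Lemma~\ref{lem:order} yields a set $X_z\subseteq C(G)$ and an $M$-ear $P_z$ relative to $G_0$ with $V(H_z)\subseteq X_z$ and $\parcondxp{X_z}{P_z}{G_0}{M}{G}$; Lemma~\ref{lem:base}\ref{item:k} then places $X_z$ inside $V(K)$, and Lemma~\ref{lem:base}\ref{item:n} supplies a class $T_z\in\pargpart{G}{G_0}$ with $\parNei{G}{X_z}\cap V(G_0)\subseteq T_z$; in particular, both ends of $P_z$ lie in $T_z$. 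The goal is to verify that $T_z$ does not depend on $z\in V(K)$; once that is done, for any $w\in\parNei{G}{V(K)}\cap V(G_0)$ we pick a neighbor $z\in V(K)$ of $w$ and conclude $w\in\parNei{G}{X_z}\cap V(G_0)\subseteq T_z =: T_K$.

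Since $K$ is connected, the independence of $T_z$ reduces to showing $T_z=T_{z'}$ whenever $zz'\in E(G)$ with $z,z'\in V(K)$. The case $H_z=H_{z'}$ is handled by taking $(X_z,P_z)=(X_{z'},P_{z'})$, so assume $H_z\neq H_{z'}$; then $zz'$ is inter-component, hence not allowed, and $zz'\notin M$. Using Lemma~\ref{lem:int2root}, select $M$-forwarding paths $R_z$ from $z$ to an end $r_z$ of $P_z$ with $V(R_z)\setminus\{r_z\}\subseteq X_z$, and $R_{z'}$ from $z'$ to an end $r_{z'}$ of $P_{z'}$ with $V(R_{z'})\setminus\{r_{z'}\}\subseteq X_{z'}$. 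Let $\tilde P := R_z + zz' + R_{z'}$, viewed as the walk from $r_z$ to $r_{z'}$ that traverses $R_z$ in reverse, then the edge $zz'$, then $R_{z'}$. The last edge of $R_z$ incident to $z$ and the first edge of $R_{z'}$ incident to $z'$ both lie in $M$, while $zz'\notin M$, so the alternation of $\tilde P$ is correct and $\tilde P-\{r_z,r_{z'}\}$ is $M$-saturated. Provided $\tilde P$ is a simple path, it is therefore an $M$-ear relative to $G_0$ whose interior lies in $V(K)\subseteq C(G)$, and Lemma~\ref{lem:ear-base} delivers $r_z\gsim{G} r_{z'}$, i.e., $T_z=T_{z'}$.

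The main obstacle is ensuring that $\tilde P$ is a simple path, since the closure step of Lemma~\ref{lem:compclosure} built into Lemma~\ref{lem:order} may cause $X_z$ and $X_{z'}$ to share factor-components and thus $R_z,R_{z'}$ to share vertices. I plan to dispose of this by case analysis. If $V(H_{z'})\subseteq X_z$, then $z'\in X_z$ and every neighbor of $z'$ in $V(G_0)$ already lies in $\parNei{G}{X_z}\cap V(G_0)\subseteq T_z$, so $T_{z'}=T_z$ follows directly from Lemma~\ref{lem:base}\ref{item:n} applied to $(X_z,P_z)$ without constructing $\tilde P$; the subcase $V(H_z)\subseteq X_{z'}$ is symmetric. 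In the remaining situation neither $H_z$ nor $H_{z'}$ is contained in the opposite $X$, and any residual overlap of $R_z$ and $R_{z'}$ can be resolved by trimming each path at its first entry into the opposite set (using Lemma~\ref{lem:cut2forwarding} to preserve $M$-forwarding structure on the trimmed portion) and re-routing through the overlap vertex via the alternating-path structure guaranteed by Proposition~\ref{prop:nonpositive} inside the consistent factor-component where the overlap occurs.
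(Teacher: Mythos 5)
Your high-level strategy matches the paper's: construct an $M$-ear relative to $G_0$ whose interior lies in $V(K)$, apply Lemma~\ref{lem:ear-base} to force its two ends into the same class of $\pargpart{G}{G_0}$, and propagate this along the connectedness of $K$. Your reduction to adjacent $z, z'\in V(K)$ and your final step (picking a neighbor of $w$) are sound. The gap is precisely in the disjointness handling, which you yourself flag as "the main obstacle," and neither of your two escape routes closes it. In the subcase $V(H_{z'})\subseteq X_z$, the observation that the $V(G_0)$-neighbors of $z'$ lie in $T_z$ does not deliver $T_{z'}=T_z$: the class $T_{z'}$ was pinned down by the ends of $P_{z'}$, which live in $\parNei{G}{X_{z'}}\cap V(G_0)$, and nothing you wrote connects that set to $\parNei{G}{X_z}\cap V(G_0)$ (indeed $z'$ may have no neighbor in $V(G_0)$ at all, making your observation vacuous). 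The statement "$T_{z'}=T_z$ follows directly from Lemma~\ref{lem:base}\ref{item:n} applied to $(X_z,P_z)$" is at best the assertion that you \emph{could have chosen} $(X_z,P_z)$ for $z'$; but that only helps if you have already proven that the resulting class is independent of the choice of pair, which is exactly what is at stake. In the remaining subcase, "trimming each path at its first entry into the opposite set ... and re-routing through the overlap vertex via ... Proposition~\ref{prop:nonpositive}" is a plan, not a proof: you do not say how the alternation parity is preserved across the re-routing, nor why the re-routed pieces are themselves disjoint, and Proposition~\ref{prop:nonpositive} only gives you \emph{some} alternating path inside a factor-component, not one with controlled endpoints or controlled avoidance of the previously used vertices.

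The paper resolves this by working with the sets rather than with individual paths. It defines $\mathcal{X}$ as all $X\subseteq V(K)$ satisfying $\Psi(X,P;G_0,M,G)$ for some $M$-ear $P$, shows $\bigcup\mathcal{X}=V(K)$, sorts $\mathcal{X}$ into subfamilies $\mathcal{X}_T$ indexed by classes $T\in\pargpart{G}{G_0}$, and then proves (Claim~\ref{claim:disjoint}) that two members $X\in\mathcal{X}_S$, $Y\in\mathcal{X}_T$ with $X\cap Y\neq\emptyset$, or with $E[X,Y]\neq\emptyset$, force $S=T$. The disjointness of the two forwarding paths used there is engineered structurally: one path is trimmed so that its interior lies in $X\setminus Y$, the other has interior in $Y$, and these sets are disjoint by construction --- no ad hoc re-routing is needed. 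I suggest you either reproduce that set-level argument, or, if you want to keep the vertex-by-vertex framing, explicitly reduce your overlap cases to a lemma of the same shape as Claim~\ref{claim:disjoint} and prove it; as written, the argument does not go through.
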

\begin{proof}
Let $M$ be a  maximum matching of $G$. 

Define a family $\mathcal{X}\subseteq 2^{V(K)}$ as follows: 
$X\subseteq V(K)$ is a member of $\mathcal{X}$ if 
 $\parcondxp{X}{P}{G_0}{M}{G}$ holds
 for some $M$-ear $P$ relative to $G_0$. 
\begin{cclaim} 
It holds that $\bigcup_{X\in\mathcal{X}} X = V(K)$. 
\end{cclaim} 
\begin{proof} 
From the definition of $\mathcal{X}$, clearly $\bigcup_{X\in\mathcal{X}} X \subseteq V(K)$. 
In contrast,  $V(K)$ is obviously separating, and, 
from Lemma~\ref{lem:order},  
each factor-component that composes $V(K)$ 
is contained in a set of vertices $X$ with $\parcondxp{X}{P}{G_0}{M}{G}$ 
for some $M$-ear $P$ relative to $G_0$;  
from Lemma~\ref{lem:base}, this $X$ satisfies $X\subseteq V(K)$. 
Therefore, we have $\bigcup_{X\in\mathcal{X}} X \supseteq V(K)$. 
\qed
\end{proof}

For each $T\in\pargpart{G}{G_0}$, 
we define $\mathcal{X}_T \subseteq \mathcal{X}$ as follows: 
$X\in\mathcal{X}$ is a member of $\mathcal{X}_T$ if 
 $\parNei{G}{X}\cap V(G_0) \subseteq T$ holds. 
From Lemma~\ref{lem:base}, 
if $S\neq T$, then $\mathcal{X}_S \cap \mathcal{X}_T = \emptyset$; additionally,  
$\bigcup_{T\in\pargpart{G}{G_0}} \mathcal{X}_T = \mathcal{X}$ holds. 

\begin{cclaim} \label{claim:disjoint} 
Let $S, T\in \pargpart{G}{G_0}$. 
Let $X \in \mathcal{X}_S$ and $Y \in \mathcal{X}_T$. 
If $X \cap Y \neq \emptyset$, then $S = T$. 
If $X \cap Y = \emptyset$ and $E[X, Y] \neq \emptyset$, 
then $S = T$.  
\end{cclaim} 
\begin{proof} 
First assume $X \cap Y \neq \emptyset$. 
As both $X$ and $Y$ are closed with respect to $M$, 
so is $X\cap Y$. 
Take $x\in X\cap Y$ arbitrarily; 
from Lemma~\ref{lem:int2root}, 
we have an $M$-forwarding path $Q$ from $x$ to a vertex $r\in V(G_0)$ 
with $V(Q)\setminus \{r\} \subseteq X$; 
from Lemma~\ref{lem:base}, we have $r\in S$.  
Trace $Q$ from $r$, and let $y$ be the first vertex we encounter 
that is in $\parNei{G}{X\cap Y}$;   
let $z\in X\cap Y$ be such that $yz\in E[X\setminus Y, X\cap Y]$. 
Here, $rQy$ is an $M$-forwarding path with 
$V(rQy)\setminus \{r\} \subseteq X\setminus Y$. 
By contrast, 
we also have an $M$-forwarding path $R$ from $z$ to a vertex $s\in T$ 
with $V(R)\setminus \{s\} \subseteq Y$. 
Here, $rQy + yz + R$ is an $M$-ear relative to $G_0$ with ends $r$ and $s$. 
From Lemma~\ref{lem:ear-base}, $S = T$ follows. 

Next, assume $X \cap Y = \emptyset$ and $E[X, Y] \neq \emptyset$. 
Let $t_1\in X$ and $t_2 \in Y$ be vertices with $t_1t_2\in E[X, Y]$. 
From Lemma~\ref{lem:int2root}, 
for each $i\in\{1,2\}$, 
we have an $M$-forwarding path $L_i$ from $t_i$ 
to a vertex $r_i\in V(G_0)$ with $V(L_1)\setminus\{r_1\}\subseteq X$ 
and $V(L_2)\setminus\{r_2\}\subseteq Y$; 
from Lemma~\ref{lem:base}, 
we have $r_1\in S$ and $r_2\in T$.  
Therefore, 
$L_1 + t_1t_2 + L_2$ forms an $M$-ear relative to $G_0$ with ends  $r_1$ and $r_2$. 
From Lemma~\ref{lem:ear-base}, again $S = T$ follows. 
\qed
\end{proof}

As $K$ is connected, 
Claim~\ref{claim:disjoint} implies $|\{ T\in\pargpart{G}{G_0} : \mathcal{X}_T \neq \emptyset\}| = 1$.  
This completes the proof.

\qed
\end{proof}


\subsection{Declaration of New Canonical Decomposition} \label{sec:cor:dec} 
We can now declare a new canonical decomposition  
in which the basilica order and the generalized Kotzig-Lov\'asz decomposition are unified 
through Theorem~\ref{thm:base}.  
According to  Theorem~\ref{thm:base},
the strict upper bounds on a factor-component 
are each ``attached'' or ``assigned'' to an equivalence class of the generalized Kotzig-Lov\'asz decomposition. 
That is, 
let $H$ be a factor-component of a graph $G$,  
let $I\in\comp{G}\setminus \{H\}$ be such that $H\yield I$, 
and let $K$ be the connected component of $G[\vup{H}]$ with $V(I)\subseteq V(K)$. 
If $S\in\pargpart{G}{H}$ is such that $\parNei{G}{K}\cap V(H)\subseteq S$ 
 as in Theorem~\ref{thm:base}, 
then we can view  $I$ as being ``attached'' or ``assigned'' to $S$ as an upper bound on $H$. 
Hence, 
a graph can be regarded as being constructed 
by repeatedly assigning and attaching 
each factor-component to  an equivalence class possessed by  a lower bound.

Although the basilica order structure 
and the generalized Kotzig-Lov\'asz decomposition themselves 
can  be  considered individually as  canonical decompositions, 
they are integrated into a single theory of a canonical decomposition 
 through the relationship given by Theorem~\ref{thm:base}. 
We call this integrated concept 
the {\em basilica decomposition}, 
because this evokes the idea of a graph being structured like an architectural building. 
The term ``basilica'' comes from 
the {\em cathedral theorem}  by Lov\'asz~\cite{lovasz1972b, lp1986}, 
which is an inductive characterization of {\em saturated graphs}. 
 In fact, the cathedral theorem can be derived from our new canonical decomposition~\cite{kita2014alternative}.

\section{Inconsistent Factor-components Via Gallai-Edmonds Structure Theorem} 
In this section, we use the Gallai-Edmonds structure theorem to obtain futher information about the inner structure of inconsistent factor-components. 
\begin{lemma}\label{lem:forwarding2allowed} 
Let $G$ be a graph, $M$ be a maximum matching of $G$, 
and $r$ be a vertex exposed by $M$. 
If $P$ is an $M$-forwarding path from some vertex to $r$, 
then all edges of $P$ are allowed 
and therefore $P$ is contained in a  factor-component. 
\end{lemma}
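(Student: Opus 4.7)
The plan is to take the symmetric difference $M' := M \triangle E(P)$ and show that it is another maximum matching of $G$; then every edge of $P$ lies in $M$ or in $M'$, and so is allowed.

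First I would verify that $M'$ is a matching of $G$. Since $M\cap E(P)$ is a near-perfect matching of $P$ exposing $r$, the edges of $P$ alternate between $M$ and $E(G)\setminus M$, with the edge(s) of $P$ incident to $r$ lying outside $M$. Every vertex $v\in V(P)\setminus\{r\}$ is covered by a unique edge of $M\cap E(P)$, so $v$ has no other $M$-edge (matching property). Consequently, at each internal vertex of $P$ the symmetric difference simply swaps the two incident $P$-edges, at $r$ it adds the unique incident $P$-edge (which was missing because $r$ is $M$-exposed), and at the other end of $P$ (if $P$ has any edges) it removes the unique incident $M$-edge. No vertex ends up in two edges of $M'$, and outside $V(P)$ nothing changes, so $M'$ is indeed a matching.

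Next I would check that $|M'|=|M|$. An $M$-forwarding path from some vertex to $r$ has an even number of edges, with exactly half in $M$; the symmetric difference removes these and inserts the other half, preserving cardinality. Hence $M'$ is also a maximum matching of $G$. Now every edge $e\in E(P)$ is either in $M\cap E(P)\subseteq M$ or in $E(P)\setminus M\subseteq M'$; in either case $e$ belongs to a maximum matching of $G$, so $e$ is allowed. Finally, $P$ is a connected subgraph of the subgraph spanned by allowed edges, so all vertices and edges of $P$ lie in a single connected component of that subgraph, i.e., in a single factor-component of $G$.

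The one step that demands care is confirming that $M'$ is a matching, because in principle a vertex of $P$ might be incident to an $M$-edge outside $E(P)$ that could collide after the swap. The observation that pins this down is that every vertex of $V(P)\setminus\{r\}$ is already saturated within $P$ by $M\cap E(P)$, so it has no other $M$-edge, while $r$ is globally $M$-exposed (here the hypothesis that $r$ is exposed by the whole matching $M$, not only by $M\cap E(P)$, is essential); once this is in place the rest of the argument is essentially bookkeeping.
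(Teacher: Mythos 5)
Your proof is correct and is essentially the paper's own argument: the paper also observes that $M\triangle E(P)$ is a maximum matching and concludes immediately. You simply fill in the bookkeeping (why the symmetric difference is a matching and why its size is unchanged) that the paper leaves implicit.
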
 

\begin{proof} 
If $P$ is such a path, then $M\triangle E(P)$ is also a maximum matching of $G$. 
Hence, the claim follows. 
\qed
\end{proof} 

\begin{lemma}\label{lem:d2comp}  
Let $G$ be a graph. 
If $K$ is a connected component of $G[D(G)]$,  then
the vertices in $V(K)\cup \parNei{G}{K}$ are contained in the same factor-component. 
\end{lemma}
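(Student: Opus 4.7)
The plan is to observe that every edge of $G$ having at least one endpoint in $V(K)$ and its other endpoint in $V(K)\cup\parNei{G}{K}$ is allowed; once this is verified, the assumed connectedness of $K$ in $G[D(G)]$ makes $V(K)\cup\parNei{G}{K}$ connected inside the subgraph of $G$ spanned by the allowed edges, and so by definition these vertices all lie in a single factor-component.

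To verify allowedness, I would take such an edge $uv$ with $u\in V(K)\subseteq D(G)$ and $v\in V(K)\cup\parNei{G}{K}$. By definition of $D(G)$ there exists a maximum matching $M_u$ that exposes $u$. The vertex $v$ cannot also be exposed by $M_u$, because otherwise $M_u\cup\{uv\}$ would be a matching with strictly more edges, contradicting maximality. Hence $v$ is covered by some edge $vw\in M_u$, with $w\neq u$ since $u$ is exposed by $M_u$. Then $(M_u\setminus\{vw\})\cup\{uv\}$ is a matching of the same cardinality as $M_u$, hence still maximum, and it contains the edge $uv$; so $uv$ is allowed.

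Combining the two cases, $V(K)$ sits inside a single factor-component $H$, because $K$ is a connected subgraph of $G[D(G)]$ all of whose edges are allowed. Moreover, each $a\in\parNei{G}{K}$ is joined to some $v\in V(K)$ by an allowed edge, and this forces $a\in V(H)$ as well. Together, $V(K)\cup\parNei{G}{K}\subseteq V(H)$, which is the desired conclusion.

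The only real obstacle is simply setting up the one-edge swap cleanly; everything else is a bookkeeping consequence of the definition of factor-components as the connected components of the allowed-edge subgraph. No deep use of the Gallai--Edmonds structure theorem seems to be required at this step, though the author may well prefer a proof phrased through the decomposition so as to harmonise with the rest of the section.
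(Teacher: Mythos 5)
Your proof is correct, and it takes a genuinely different and more elementary route than the paper's. The paper invokes the Gallai-Edmonds structure theorem (Theorem~\ref{thm:gallaiedmonds}) twice: first to assert that $K$ is factor-critical, from which it builds an $M$-forwarding path to a fixed exposed root $r$ via Lemma~\ref{lem:path2root} and then concludes allowedness via Lemma~\ref{lem:forwarding2allowed}; second to assert that every edge in $\parcut{G}{K}$ is allowed. Your argument bypasses Gallai-Edmonds entirely and instead establishes, from first principles, the general fact that \emph{every} edge incident to a vertex of $D(G)$ is allowed: take $u\in D(G)$ with exposing maximum matching $M_u$, observe that any neighbour $v$ must be covered (else $M_u\cup\{uv\}$ augments), and swap in $uv$ for the edge of $M_u$ at $v$. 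The rest is, as you say, bookkeeping with the definition of factor-components. What your approach buys is brevity and self-containedness --- it needs none of the machinery the paper marshals here --- which would in fact better serve the paper's overall goal of self-contained proofs; what the paper's approach buys is consistency with the theme of the enclosing section, which is explicitly devoted to deriving consequences of the Gallai-Edmonds theorem, and it reuses Lemmas~\ref{lem:path2root} and \ref{lem:forwarding2allowed} already in hand. Both arguments are sound; yours is the more economical.
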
 

\begin{proof}
According to Theorem~\ref{thm:gallaiedmonds}, $K$ is factor-critical. 
Let $r\in V(K)$, 
and let $M$ be a maximum matching of $G$ exposing $r$. 
Arbitrarily choose $x\in V(K)$. 
From Lemma~\ref{lem:path2root},  
there is an $M$-forwarding path $P$ from $x$ to $r$. 
From Lemma~\ref{lem:forwarding2allowed}, 
$x$ and $r$ are contained in the same factor-component. 
Thus, all vertices of $K$ are contained in the same factor-component. 
From Theorem~\ref{thm:gallaiedmonds}, 
any edge in $\parcut{G}{K}$ is allowed. 
Therefore, the vertices in $\parNei{G}{K}$ are also contained in the same factor-component 
as the vertices of $K$.    
\qed
\end{proof}

The next theorem follows from Lemma~\ref{lem:d2comp} and Theorem~\ref{thm:gallaiedmonds}. 
\begin{theorem}\label{thm:inconst2connected}  
Let $G$ be a graph. 
Any subgraph $H$ is an inconsistent factor-component of $G$ 
if and only if it is a connected component of $G[D(G)\cup A(G)]\setminus E(G[A(G)])$. 
\end{theorem}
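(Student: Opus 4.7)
The plan is to prove the equivalence at the level of vertex sets, identifying the vertex set of each inconsistent factor-component with the vertex set of a connected component of $G' := G[D(G) \cup A(G)] \setminus E(G[A(G)])$. Everything follows readily once Proposition~\ref{prop:fcomp2dac}, Theorem~\ref{thm:gallaiedmonds}, and Lemma~\ref{lem:d2comp} are in hand.

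For the forward direction, let $H \in \inconst{G}$. Proposition~\ref{prop:fcomp2dac} immediately yields $V(H) \subseteq D(G) \cup A(G)$, so $V(H)$ is a set of vertices of $G'$. I would then verify that $V(H)$ spans a connected component of $G'$ in two steps. First, I would show $V(H)$ induces a connected subgraph of $G'$: by definition $H$ is connected by allowed edges, and any allowed edge with both endpoints in $D(G) \cup A(G)$ cannot lie in $E(G[A(G)])$ by Theorem~\ref{thm:gallaiedmonds}\,(iii), so every such allowed edge belongs to $E(G')$. Second, I would show that no edge of $G'$ leaves $V(H)$: take any $uv \in E(G')$ with $u \in V(H)$. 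Since $V(H) \subseteq D(G) \cup A(G)$ and $uv \notin E(G[A(G)])$, this edge must be either in $E[A(G), D(G)]$ or in $E(G[D(G)])$. In the first case, $uv$ is allowed by Theorem~\ref{thm:gallaiedmonds}\,(iii), so $u$ and $v$ belong to the same factor-component and $v \in V(H)$. In the second case, $u$ and $v$ lie in a common connected component of $G[D(G)]$, which by Lemma~\ref{lem:d2comp} is entirely contained in a single factor-component, again forcing $v \in V(H)$.

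For the reverse direction, let $K$ be a connected component of $G'$. I would first argue $V(K) \cap D(G) \neq \emptyset$: by definition $A(G) = \parNei{G}{D(G)}$, so every vertex $a \in V(K) \cap A(G)$ has an edge to some vertex in $D(G)$ in $G'$, ruling out the possibility that $V(K)$ lies entirely in $A(G)$. Picking $v \in V(K) \cap D(G)$, the factor-component $H$ containing $v$ is inconsistent by Proposition~\ref{prop:fcomp2dac}, and applying the forward direction to $H$ shows $V(H)$ is a connected component of $G'$; since $v \in V(H) \cap V(K)$, we conclude $V(H) = V(K)$.

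The only real technical point is the handling of edges inside $E(G[D(G)])$ in the forward direction: such edges are not directly classified as allowed or non-allowed by Theorem~\ref{thm:gallaiedmonds}, so one must route the argument through Lemma~\ref{lem:d2comp} instead. Once that is observed, the rest is bookkeeping.
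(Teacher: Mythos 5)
Your proposal is correct, and it uses exactly the same ingredients the paper invokes: the paper states this theorem with only a one-line remark that it ``follows from Lemma~\ref{lem:d2comp} and Theorem~\ref{thm:gallaiedmonds},'' and your argument is a faithful fleshing-out of that claim, additionally routing the containment $V(H) \subseteq D(G) \cup A(G)$ through Proposition~\ref{prop:fcomp2dac}. The one small attribution slip is that a factor-component meeting $D(G)$ is inconsistent directly by the \emph{definition} of consistency (``disjoint from $D(G)$''), not via Proposition~\ref{prop:fcomp2dac}; and your explicit declaration that the equivalence is to be read at the level of vertex sets is the right call, since it accords with the paper's stated convention of identifying a subgraph with its vertex set (an inconsistent factor-component $H = G[V(H)]$ may contain edges of $E(G[A(G)])$, so the identification is not literal at the level of edge sets).
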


\section{Pertinent Properties}\label{sec:pertinentprops}
\subsection{Non-triviality of $\yield$} \label{sec:add}
The following theorem shows that 
most factorizable graphs with more than one factor-components 
have non-trivial structures as posets. 
\begin{theorem}\label{thm:add}
Let $G$ be a factorizable graph,
$G_1, G_2 \in \mathcal{G}(G)$ be
 factor-components for which  $G_1\yield G_2$ does not hold,
  and 
let $G_1$ be minimal in the poset $(\mathcal{G}(G), \yield)$.
Then there are possibly identical complement edges $e$ and  $f$ of $G$  between
$G_1$ and $G_2$ with 
$\mathcal{G}(G + e + f)  = \mathcal{G}(G)$ and 
$G_1\yield G_2$ in $(\mathcal{G}(G+e+f), \yield)$.
\end{theorem}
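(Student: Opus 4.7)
The plan is to exhibit complement edges $e$ and $f$ between $V(G_1)$ and $V(G_2)$ such that $G + e + f$ contains an $M$-ear $P$ relative to $G_1$ whose interior traverses $G_2$; Lemma~\ref{lem:nonrefinable} then yields $G_1 \yield G_2$ in $G + e + f$, provided $G_1$ and $G_2$ remain distinct factor-components. I would fix a perfect matching $M$ of $G$, which exists since $G$ is factorizable, and choose an $M$-edge $xy \in M_{G_2}$, available since $G_2$ is consistent and has at least two vertices. The key choice is two vertices $u, v \in V(G_1)$ lying in a common equivalence class $T$ of the generalized Kotzig--Lov\'asz decomposition $\pargpart{G}{G_1}$, with $u \gsim{G} v$. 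Setting $e := ux$ and $f := yv$, the sequence $u, x, y, v$ (or the $3$-cycle $u, x, y, u$ in the degenerate case $u = v$) is an ear relative to $G_1$ whose interior $\{x, y\}$ is $M$-saturated by the single $M$-edge $xy$, so it is an $M$-ear of the required form as soon as $ux$ and $yv$ are complement edges.

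To see that such $e, f$ can be found as complement edges, I would observe that the hypothesis $G_1 \not\yield G_2$ together with Lemma~\ref{lem:nonrefinable} precludes any such $M$-ear already inside $G$, so for every choice of $u, v \in V(G_1)$ and $xy \in M_{G_2}$ at least one of $ux, yv$ must lie outside $E(G)$. If both lie outside, I take $e$ and $f$ as the two new complement edges; if exactly one of them is in $E(G)$, I set $e = f$ to be the single missing edge, so that the ear is completed by the one new edge together with an already existing cross-edge of $G$. By ranging over the classes $T$, over the pairs $u, v \in T$ (including $u = v$), and over the $M$-edges of $G_2$, such a choice can be secured.

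The main obstacle will be verifying $\mathcal{G}(G + e + f) = \mathcal{G}(G)$. First, I would show $e$ and $f$ are non-allowed in $G + e + f$: since $u \gsim{G} v$ implies $u \sim v$ in the classical Kotzig--Lov\'asz decomposition of $G_1$ by the refinement in Observation~\ref{prop:refinement}, $G_1 - u - v$ is not factorizable, while $G_2 - x - y$ is factorizable because $M_{G_2} \setminus \{xy\}$ is a perfect matching of it; these two facts combine to forbid any perfect matching of $G + e + f$ from using $e$ or $f$. The hardest step, which I regard as the principal obstacle, is to show that no preexisting non-allowed edge $e' \in E(G)$ becomes allowed in $G + e + f$: any new $M$-alternating circuit $C$ through $e'$ must use $e$ or $f$, and by slicing $C$ at its crossings between factor-components of $G$ and iteratively applying Lemma~\ref{lem:ear-base} and Proposition~\ref{prop:nonpositive}, the existence of $C$ should reduce to the existence of an $M$-saturated path between $u$ and $v$ in $G$, which is forbidden by $u \gsim{G} v$ and Lemma~\ref{lem:def2saturated}. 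The minimality of $G_1$ in $(\mathcal{G}(G), \yield)$ is decisive here: since no factor-component of $G$ lies strictly below $G_1$, the circuit $C$ cannot close itself through a detour ``under'' $G_1$ via a strict lower bound, which is exactly the configuration that would otherwise defeat the reduction.
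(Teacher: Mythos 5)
Your approach is genuinely different from the paper's: you construct a short three-edge ear $u-x-y-v$ anchored at a pair $u\gsim{G}v$ from the same class of $\pargpart{G}{G_1}$ and a matching edge $xy\in M_{G_2}$, whereas the paper fixes a cross-edge $x y$ ($x\in V(G_1)$, $y\in V(G_2)$) and builds a long ear using a vertex $w\not\sim y$ in $G_2$ and an $M$-saturated path of $G_2$ between $w$ and $y$. The paper then verifies $\comp{G+xw}=\comp{G}$ by extracting from any offending $M$-alternating circuit an $M$-ear relative to $G_2$ that traverses $G_1$, contradicting minimality of $G_1$ via Lemma~\ref{lem:nonrefinable}. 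Your proposal inverts the roles: you anchor the class choice in $G_1$ rather than $G_2$, which makes the verification run on $u\gsim{G}v$ instead of on minimality.

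The problem is that the verification step --- which you explicitly flag as the principal obstacle --- has a real gap, not just a presentational one. Consider an $M$-alternating circuit $C$ in $G+e+f$ through a previously non-allowed edge, where $C$ uses $e=ux$ but not $f=yv$. Then $C-e$ is an $M$-saturated path in $G$ from $u$ to $x$. Tracing from $x$ and applying Lemma~\ref{lem:cut2forwarding} to the separating set $V(G_1)$, you get an $M$-forwarding segment from $x$ to the first vertex $z\in V(G_1)$, and the remaining segment $z\ldots u$ is $M$-saturated in $G$ between two vertices of $G_1$. By Lemma~\ref{lem:def2saturated} this forces $u\not\gsim{G}z$, but there is no control to make $z=v$, so no contradiction with $u\gsim{G}v$ materializes. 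In particular, the ``slicing and iteratively applying Lemma~\ref{lem:ear-base} and Proposition~\ref{prop:nonpositive}'' does not reduce to an $M$-saturated $u$--$v$ path in this case. The reduction does work cleanly when $C$ uses both $e$ and $f$ (then $C-e-f$ contains an $M$-saturated $u$--$v$ path), but when $u=v$ the alternating circuit cannot use both $e$ and $f$ at all, so you are stuck in exactly the unresolved case. The same gap affects your one-paragraph claim that $e$ and $f$ themselves are non-allowed: the assertion that ``$G_1-u-v$ not factorizable and $G_2-x-y$ factorizable combine to forbid any perfect matching from using $e$ or $f$'' does not follow, because a perfect matching of $G+e+f$ using $e$ can rematch vertices across component boundaries and is not a disjoint union of matchings of $G_1$ and $G_2$. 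To close this case you would need some argument that turns the circuit into an $M$-ear relative to $G_2$ traversing $G_1$ so that minimality of $G_1$ can be invoked; that is precisely the manoeuvre the paper performs, using the pre-existing cross-edge $xy$ as the ``closing'' leg of the ear, and it is not clear how to perform it with your choice of $e$ and $f$.
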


\begin{proof}
First, we prove  the case where there is an edge $xy$  with $x\in V(G_1)$ and $y\in V(G_2)$.
Let $M$ be a perfect matching of $G$.
Choose a vertex $w\in V(G_2)$  with  $w\not\sim y$ in $G_2$,
and let $P$ be an $M$-saturated path of $G_2$ between $w$ and $y$.
If $xw\in E(G)$ holds, then 
$xy + P + wx$ is an $M$-ear that is relative to $G_1$ and traverses $G_2$. 
This implies $G_1\yield G_2$ under Lemma~\ref{lem:nonrefinable}, 
which is a contradiction.
Thus, $xw\not\in E(G)$ holds.

Suppose $\mathcal{G}(G+xw) \neq \mathcal{G}(G)$.
Then, Lemma~\ref{lem:allowed} implies that $G + xw$ has an $M$-alternating circuit that contains $xw$, hence $G$ has an $M$-saturated path $C$ between $x$ and $w$.  
Trace $C$ from $x$,   
and let $z$ be the first vertex   in $V(G_2)$ that we encounter. 
Then, $xy + xCz$ is an $M$-ear of $G$ that is relative to $G_2$
and traverses $G_1$, which implies $G_2\yield G_1$ under Lemma~\ref{lem:nonrefinable};  
this contradicts the minimality of $G_1$. 
Thus, $\mathcal{G}(G+xw) = \mathcal{G}(G)$,  and  we   have proved  this case.

We now consider the other case,
where no edge of $G$ connects $G_1$ and $G_2$.
Choose $x\in V(G_1)$ and $y\in V(G_2)$ arbitrarily.
If $\mathcal{G}( G + xy ) = \mathcal{G}(G)$ holds, then  
we can reduce it to the first case and the claim follows.

Therefore, it suffices to consider the case with $\mathcal{G}(G + xy ) \neq \mathcal{G}(G)$. 
Then, from Lemma~\ref{lem:allowed}, for any perfect matching $M$ of $G$,
 $G+xy$  has an $M$-alternating circuit that  contains $xy$. 
Thus, we have an $M$-saturated path $C$ between $x$ and $y$ in $G$. 
Trace $C$ from $y$,  
and let $u$ be the first vertex in  $G_1$ that we encounter.  
Furthermore, trace $uCy$ from $u$, and let $v$ be the first vertex we encounter that is in $G_2$.

If $\mathcal{G}(G + uv) = \mathcal{G}(G)$, then the claim follows by the same argument.

Otherwise, that is, if  $\mathcal{G}(G + uv) \neq \mathcal{G}(G)$, then 
 Lemma~\ref{lem:allowed} implies that  
$G$ has an $M$-alternating circuit that contains $uv$. 
Thus, we have an $M$-saturated path $D$ between $u$ and $v$ in $G$. 

Trace $D$ from $u$,   
and  let $w$ be the first vertex of $vCu - u$ that we encounter. 

If $wCu$ has an even number of edges, 
then $wCu + uDw$ is an $M$-alternating circuit of $G$ that contains non-allowed edges, 
which is a contradiction according to Lemma~\ref{lem:allowed}. 
Therefore, we assume that $wCu$ has an odd number of edges.  
Let  $H\in\mathcal{G}(G)$ be the factor-component with $w\in V(H)$.

Then, $wCu + uDw - E(H)$ is  an $M$-ear  that  is relative to $H$ and traverses $G_1$; 
this implies $G_1\yield H$ from Lemma~\ref{lem:nonrefinable}, 
which contradicts the minimality of $G_1$. 
Thus, this completes the proof. 
\qed
\end{proof}

\subsection{Vertices in Upper Bounds} 
From Theorem~\ref{thm:base}, 
the following is derived rather easily. 

\begin{theorem}\label{thm:maximumup} 
Let $G$ be a  graph, and let $H\in\mathcal{G}(G)$. 
Then, $\vupstar{H}$ is the maximum critical-inducing set for $H$; 
that is, the union of all the critical-inducing sets for $H$ 
is also a critical-inducing set for $H$ 
and equals $\vupstar{H}$. 
\end{theorem}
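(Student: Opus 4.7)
The plan is to establish the theorem in two steps: first show that any critical-inducing set for $H$ is contained in $\vupstar{H}$, and then show that $\vupstar{H}$ itself is a critical-inducing set for $H$. Together these yield that $\vupstar{H}$ is the (unique, maximum) critical-inducing set for $H$ and equals the union of all such sets.

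For the containment step, I would take an arbitrary critical-inducing set $X$ for $H$. Since $X$ is separating, it is a disjoint union of vertex sets of factor-components, so it suffices to show that for any $H'\in\mathcal{G}(G)$ with $V(H')\subseteq X$ we have $H'\in\parupstar{G}{H}$. But this is essentially tautological: $X$ contains both $V(H)$ and $V(H')$ and $G[X]/H$ is factor-critical by hypothesis, so $X$ is a critical-inducing set for $H$ to $H'$; hence $H\yield H'$ by definition, i.e.\ $H'\in\parupstar{G}{H}$, so $V(H')\subseteq\vupstar{H}$. Taking the union over such $H'$ gives $X\subseteq\vupstar{H}$.

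For the second step, for each $H'\in\parupstar{G}{H}$ the relation $H\yield H'$ supplies a critical-inducing set $Y_{H'}$ for $H$ to $H'$, in particular with $V(H')\subseteq Y_{H'}$. Since $\mathcal{G}(G)$ is finite, iterated application of Lemma~\ref{lem:union} gives that $Y^*:=\bigcup_{H'\in\parupstar{G}{H}}Y_{H'}$ is itself a critical-inducing set for $H$. Clearly $\vupstar{H}\subseteq Y^*$, while the first step applied to $Y^*$ yields $Y^*\subseteq\vupstar{H}$; so $Y^*=\vupstar{H}$, and $\vupstar{H}$ is a critical-inducing set for $H$. Combining both steps, $\vupstar{H}$ is the maximum critical-inducing set for $H$ and equals the union of all of them.

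I do not anticipate a genuine obstacle: the argument is essentially a bookkeeping exercise combining Lemma~\ref{lem:union} (closure of critical-inducing sets under finite union) with the definition of $\yield$. The only subtle point to verify carefully is the tautological observation in the first step that any critical-inducing set for $H$ containing $V(H')$ automatically witnesses $H\yield H'$; everything else then follows by finiteness of $\mathcal{G}(G)$.
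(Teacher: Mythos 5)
Your proposal is correct and takes essentially the same approach as the paper: both directions hinge on Lemma~\ref{lem:union} for closure under finite unions and on the near-tautological observation that a separating critical-inducing set for $H$ witnesses $H\yield H'$ for every factor-component $H'$ it contains. The paper cites Theorem~\ref{thm:order} for the containment step, whereas you derive it directly from the definitions, but this is only a cosmetic difference in presentation.
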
  
\begin{proof} 
By Theorem~\ref{thm:order}, 
any critical-inducing set for $H$ is contained in $\vupstar{H}$. 
Therefore, by Lemma~\ref{lem:union}, 
the union of all the critical-inducing set for $H$ is 
also a critical-inducing set for $H$, contained in $\vupstar{H}$. 

Conversely, 
by the definition of $\upstar{H}$,  
for each $I\in\upstar{H}$, 
there is a critical-inducing set for $H$ to $I$. 
Therefore, $\vupstar{H}$ is contained in, 
and accordingly coincides with 
the union of all the critical-inducing sets. 
\qed
\end{proof}

Thus, we have the following as a corollary of Theorem~\ref{thm:maximumup}. 

\begin{corollary}\label{cor:2fc}
Let $G$ be a  graph, and let $H\in\mathcal{G}(G)$ 
and $S\subseteq \pargpart{G}{H}$. 
Let $K_1,\ldots, K_l$, where $l \ge 1$,  be the connected components of $G[\vup{H}]$ 
such that $\Gamma(K_i)\cap V(H)\subseteq S$ for each $i \in \{ 1,\ldots, l\}$. 
Then, $G[ V(K_1)\cup\cdots\cup V(K_l) \cup S]/S$ is factor-critical. 
\end{corollary}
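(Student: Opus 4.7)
The plan is to deduce the corollary from Theorem~\ref{thm:maximumup} in three steps: first show that each individual set $V(K_i) \cup V(H)$ is a critical-inducing set for $H$; then combine them via Lemma~\ref{lem:union} to obtain that $X \cup V(H)$ is a critical-inducing set for $H$, where $X := V(K_1) \cup \cdots \cup V(K_l)$; and finally identify $G[X \cup S]/S$ with $G[X \cup V(H)]/H$ up to loops.

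For the first step, I would fix a maximum matching $M$ of $G$ and pick an arbitrary $x \in V(K_i)$. By Theorem~\ref{thm:maximumup}, the set $\vupstar{H}$ is a critical-inducing set for $H$, so Lemma~\ref{lem:path2base} provides an $M$-forwarding path $P$ from $x$ to some $y \in V(H)$ with $V(P) \setminus \{y\} \subseteq \vup{H}$. The key observation is that $P$ in fact stays inside $V(K_i) \cup \{y\}$: its internal vertices lie in $\vup{H}$, hence outside $V(H)$, and because $K_i$ is a connected component of $G[\vup{H}]$ no $G$-edge between distinct $K_j$'s exists; so $P$ is confined to $K_i$ until its final edge into $V(H)$. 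Applying the converse direction of Lemma~\ref{lem:path2base} then yields that $V(K_i) \cup V(H)$ is critical-inducing for $H$.

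Combining these over $i$ via Lemma~\ref{lem:union}, the union $X \cup V(H)$ is a critical-inducing set for $H$, so by definition $G[X \cup V(H)]/H$ is factor-critical. Finally, I would verify that the bijection identifying the contracted vertex $h$ of $G[X \cup V(H)]/H$ with the contracted vertex $s$ of $G[X \cup S]/S$ (and fixing $X$ pointwise) is a graph isomorphism away from loops: the edges inside $G[X]$ clearly agree, and for an edge incident to the contracted vertex, Theorem~\ref{thm:base} ensures that every $G$-edge from $V(K_i)$ into $V(H)$ lands in the class $T_{K_i} \subseteq S$, so $xh$ is an edge of $G[X \cup V(H)]/H$ iff $x$ has some $V(H)$-neighbor iff $x$ has some $S$-neighbor iff $xs$ is an edge of $G[X \cup S]/S$. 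Since loops do not affect factor-criticality, the corollary follows.

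The main obstacle is the containment argument in the first step, namely showing that an $M$-forwarding path from $V(K_i)$ to $V(H)$ with internal vertices in $\vup{H}$ cannot leak into a different component $K_j$; but this is an immediate consequence of $K_i$ being a connected component of $G[\vup{H}]$ together with the path's internal vertices avoiding $V(H)$. Everything else is routine bookkeeping.
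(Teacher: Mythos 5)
Your proof is correct and follows the route the paper intends: the paper gives no explicit proof, stating only that Corollary~\ref{cor:2fc} follows from Theorem~\ref{thm:maximumup}, and your argument fleshes that out by restricting the maximum critical-inducing set $\vupstar{H}$ to a single component $K_i$ via Lemma~\ref{lem:path2base} and then combining via Lemma~\ref{lem:union}. One small remark: the appeal to Theorem~\ref{thm:base} in your final identification step is superfluous --- the corollary's own hypothesis $\parNei{G}{K_i}\cap V(H)\subseteq S$ already guarantees that every $V(H)$-neighbor of a vertex in $X$ lies in $S$, which is all that is needed to see that $G[X\cup S]/S$ and $G[X\cup V(H)]/H$ coincide up to relabeling the contracted vertex.
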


\subsection{Immediate Compatible Pair of Factor-Components} 

\begin{lemma} \label{lem:increment}
Let $G$ be a graph and $M$ be a maximum matching of $G$. 
Let $X\subseteq V(G)$ be an critical-inducing set for $G_1\in \comp{G}$ 
and let $P$ be an $M$-ear relative to $X$ with 
$\earint{P}\neq\emptyset$ and $\earint{P}\subseteq C(G)$. 
Let $Y := X \cup V(H_1)\cup \cdots \cup V(H_k)$, 
where $H_1,\ldots, H_k$ are the factor-components that $P$ traverses. 
Then, $Y$ is a critical-inducing set for $G_1$. 
\end{lemma}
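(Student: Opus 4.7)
The plan is to verify the two defining properties of a critical-inducing set for $G_1$: that $Y$ is separating, and that $G[Y]/G_1$ is factor-critical. For the latter I would invoke Lemma~\ref{lem:path2base}\ref{item:path2base:pathin} as the criterion, so that it suffices to exhibit, for every $x \in Y \setminus V(G_1)$, an $M$-forwarding path from $x$ to some vertex of $V(G_1)$. The separating property is the easier step: since $P$ is an ear relative to $X$ we have $\earint{P} \cap X = \emptyset$, and by hypothesis each $H_i$ contains at least one vertex of $\earint{P}$, so $V(H_i) \not\subseteq X$. Because $X$ is separating and $H_i$ is a factor-component, this forces $V(H_i) \cap X = \emptyset$; since distinct factor-components are vertex-disjoint, $Y$ is itself a disjoint union of factor-components.

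For the path property I would split $Y \setminus V(G_1)$ into three cases according to where $x$ lies. If $x \in X \setminus V(G_1)$, Lemma~\ref{lem:path2base}\ref{item:path2base:pathcut} applied to $X$ yields an $M$-forwarding path from $x$ to $V(G_1)$ with all vertices in $X$. If $x \in \earint{P}$, I would follow $P$ in the direction for which the subpath $xPu$ to an endpoint $u$ of $P$ is $M$-forwarding; then $V(xPu) \cap X = \{u\}$, so concatenating $xPu$ with the first-case path at $u$ (trivial if $u \in V(G_1)$) produces a genuine $M$-forwarding path because the two pieces share only $u$. Finally, if $x \in V(H_i) \setminus \earint{P}$ for some $i$, the containment $\earint{P} \subseteq C(G)$ together with $V(H_i) \cap \earint{P} \neq \emptyset$ and Proposition~\ref{prop:fcomp2dac} force $H_i \in \const{G}$; since $P \setminus X$ is $M$-saturated, $\earint{P}$ is closed with respect to $M$, so Lemma~\ref{lem:component2cut} applied to $H_i$ and $\earint{P}$ delivers an $M$-forwarding path $R$ from $x$ to some $y \in \earint{P}$ with $V(R) \setminus \{y\} \subseteq V(H_i) \setminus \earint{P}$. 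Chaining $R$ with the second-case path at $y$ then gives the desired path to $V(G_1)$.

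The principal obstacle is verifying that these concatenations produce simple paths rather than walks, but the necessary disjointness follows directly from the basic setup: $\earint{P} \cap X = \emptyset$, $V(H_i) \cap X = \emptyset$, $V(H_i) \cap V(P) \subseteq \earint{P}$, and the constraint on $V(R)$ supplied by Lemma~\ref{lem:component2cut}. Should any unforeseen overlap arise, it can be removed by trimming with Lemma~\ref{lem:cut2forwarding}, exactly as in the proof of Lemma~\ref{lem:transitivity}. Once the path property has been established for every $x \in Y \setminus V(G_1)$, Lemma~\ref{lem:path2base} yields factor-criticality of $G[Y]/G_1$, and together with the separating property this identifies $Y$ as a critical-inducing set for $G_1$.
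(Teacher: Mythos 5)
Your proof is correct, and the overall strategy matches the paper's: both reduce the claim to exhibiting, for every $x \in Y \setminus V(G_1)$, an $M$-forwarding path from $x$ into $V(G_1)$, and then invoke Lemma~\ref{lem:path2base}. The difference is in how those paths are produced. The paper's proof reuses the $\Psi$-machinery it built for Lemma~\ref{lem:order}: it notes $\parcondxp{\earint{P}}{P}{X}{M}{G}$ via Lemma~\ref{lem:extension}\ref{item:extension:ground}, upgrades this to $\parcondxp{Y\setminus X}{P}{X}{M}{G}$ via Lemma~\ref{lem:compclosure}, and then extracts paths to the ends of $P$ via Lemma~\ref{lem:int2root}, concatenating these with the $R_w$ paths from Lemma~\ref{lem:path2base}. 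You bypass $\Psi$ entirely and perform the same three-layer concatenation by hand: a trivial observation on $\earint{P}$ (replacing Lemma~\ref{lem:extension}), Lemma~\ref{lem:component2cut} to reach $\earint{P}$ from inside a traversed factor-component (which is exactly what Lemma~\ref{lem:compclosure} does internally), and a direct parity argument on the $M$-ear $P$ to reach an end in $X$ (replacing Lemma~\ref{lem:int2root}). Your version is slightly more elementary and self-contained at this point in the paper, and you make the separating-set check explicit where the paper leaves it implicit; the trade-off is that you redo by hand what $\Psi$ packages, which is why the paper routes through that abstraction. Both arguments are sound; the disjointness checks you flag as the main obstacle indeed go through for the reasons you give, and the only unstated step worth spelling out is that for any internal vertex $x$ of an $M$-ear there is always a choice of end $u$ with $xPu$ $M$-forwarding from $x$ to $u$, which follows from $P \setminus X$ being $M$-saturated together with $\cut{X} \cap M = \emptyset$.
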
 
\begin{proof} 
According to Lemma~\ref{lem:path2base}, for each $v\in X$, there is an $M$-forwarding path $R_v$ from $v$ to a vertex in $G_1$. 
From Lemma~\ref{lem:extension}, 
$\parcondxp{\earint{P}}{P}{X}{M}{G}$ holds. 
Furthermore, from Lemma~\ref{lem:compclosure}, 
$\parcondxp{Y\setminus X}{P}{X}{M}{G}$ holds.  
Hence, 
for each $x\in Y\setminus X$, there is an $M$-forwarding path $L_x$ from $x$ to 
a vertex $w$ that is equal to one of the ends of $P$, 
according to Lemma~\ref{lem:int2root}. 
Therefore, 
$L_{x} +  R_{w}$ is an $M$-forwarding path from $x$ to a vertex in $G_1$. 
Thus, the statement is proved by Lemma~\ref{lem:path2base}. 
\qed
\end{proof} 

\begin{lemma} \label{lem:intermediate}
Let $G$ be a graph and $M$ be a maximum matching of $G$. 
Let $X\subseteq V(G)$ be an critical-inducing set for $G_1\in \comp{G}$ 
and let $P$ be an $M$-ear relative to $X$ with ends $u_1$ and $u_2$. 
Then, there are a factor-component $H$ with $V(H)\subseteq X$  
and an $M$-ear $Q$ relative  to $H$ such that $E(Q)\setminus E(G[X]) = E(P)$. 
\end{lemma}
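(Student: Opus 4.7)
The plan is to set $H := G_1$ and construct $Q$ by appending $M$-forwarding paths $L_1, L_2$ in $G[X]$ to the two ends of $P$. Since $X$ is a critical-inducing set for $G_1$, Lemma~\ref{lem:path2base} yields, for each $u_i \notin V(G_1)$, an $M$-forwarding path $L_i$ from $u_i$ to some $v_i \in V(G_1)$ whose vertices other than $v_i$ lie in $X \setminus V(G_1)$; when $u_i \in V(G_1)$ I take $L_i$ trivial at $u_i$ with $v_i := u_i$. The candidate ear is then $Q := L_1 + P + L_2$, traversed $v_1 \to u_1 \to u_2 \to v_2$.

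Verifying the $M$-ear structure of $Q$ relative to $V(G_1)$ reduces to two local observations. At each non-trivial junction $u_i$, the edge of $L_i$ incident to $u_i$ lies in $M$ (since $L_i$ is $M$-forwarding and covers $u_i$), while the edge of $P$ incident to $u_i$ lies outside $M$ (because $P \setminus X$ being $M$-saturated forces the two $P$-edges incident to $u_1, u_2$ to be outside $M$), so the junction alternates properly; combined with the internal $M$-alternation of $L_i$ and $P$, this makes $Q \setminus V(G_1)$ $M$-saturated. The edge condition $E(Q) \setminus E(G[X]) = E(P)$ follows because $V(L_i) \subseteq X$ gives $E(L_i) \subseteq E(G[X])$, whereas $\earint{P} \subseteq V(G) \setminus X$ together with $V(P) \cap X = \{u_1, u_2\}$ implies that $P$ has no edge with both endpoints in $X$ in the nontrivial case $\earint{P} \neq \emptyset$, so $E(P) \cap E(G[X]) = \emptyset$.

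The main obstacle will be ensuring that $Q$ is actually a simple path (or a simple circuit when $v_1 = v_2$), i.e., disposing of unintended vertex repetitions. Since $V(L_i) \subseteq X$ and $\earint{P} \subseteq V(G) \setminus X$, the only possible collisions are $u_{3-i} \in V(L_i)$ and shared internal vertices in $V(L_1) \cap V(L_2)$. I will dispose of these by a shortcut argument: trace $L_2$ from $u_2$ and let $x$ be the first vertex encountered after $u_2$ lying in $V(L_1)$ (noting that $V(P) \cap X = \{u_1, u_2\}$ forces any further collision with $V(P)$ to occur at $u_1 \in V(L_1)$); by Lemma~\ref{lem:cut2forwarding} the prefix $u_2 L_2 x$ is $M$-forwarding from $u_2$ to $x$, and I replace $L_2$ by this prefix spliced into $L_1$ at $x$. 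An analogous shortcut handles collisions along $L_1$.

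The technical heart of the argument is a parity case analysis on where $x$ sits along $L_1$, since whether the $L_1$-edge at $x$ directed toward $v_1$ lies in $M$ determines whether $u_2 L_2 x + x L_1 v_1$ is $M$-alternating and hence valid as the tail of $Q$. When the parity is unfavorable, the spliced object instead closes up into a circuit through $u_1$ or through $x$ that avoids $V(G_1)$; the flexibility in the conclusion that $H$ is merely required to satisfy $V(H) \subseteq X$ (and not $H = G_1$) is what lets me fall back on a consistent factor-component inside $X \setminus V(G_1)$ containing $x$, building $Q$ as an ear relative to that component whenever the parity analysis fails to deliver an ear relative to $G_1$ itself.
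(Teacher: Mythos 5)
There is a genuine gap, and it sits exactly at the step you flagged as the hard part. You invoke Lemma~\ref{lem:cut2forwarding} to conclude that the prefix $u_2 L_2 x$ (where $x$ is the first vertex of $L_2$ met in $V(L_1)$) is $M$-forwarding, but that lemma requires the set you are hitting to be \emph{closed with respect to $M$}, and $V(L_1)$ is not: the end $v_1$ of $L_1$ is exposed by $M$ within $L_1$, so its matching edge (if $v_1$ is covered at all) leaves $V(L_1)$, so $\delta(V(L_1)) \cap M \neq \emptyset$ in general. Without that lemma, the prefix $u_2 L_2 x$ may equally well be $M$-saturated, and the parity at $x$ is not under your control. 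Even granting the forwarding property, the object $L_1 + P + (u_2 L_2 x + x L_1 v_1)$ reuses the segment $x L_1 v_1$ and so is not a simple ear; the promised fallback (``closes up into a circuit through $u_1$ or $x$ avoiding $V(G_1)$'') is not spelled out and, as far as I can see, does not cleanly produce an object whose internal part is $M$-saturated with both ends inside a single factor-component. Collisions between $V(L_1)$ and $V(L_2)$ may also be plural, so a single shortcut is not obviously enough.

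The paper sidesteps all of this by choosing $H$ \emph{last}, not first. Take $M$-forwarding paths $Q_1, Q_2$ from $u_1, u_2$ into $V(G_1)$ inside $X$ (Lemma~\ref{lem:path2base}); trace $Q_2$ from $u_2$ and let $x$ be the first vertex lying in a factor-component $H$ that also meets $Q_1$ (such $H$ exists because $G_1$ meets both); trace $Q_1$ from $u_1$ and let $z$ be its first vertex in $V(H)$. Because $V(H)$ \emph{is} closed with respect to $M$, Lemma~\ref{lem:cut2forwarding} applies legitimately to both prefixes, and the ``first hit'' choices force $u_2 Q_2 x - x$ to avoid $V(Q_1)$ entirely (any vertex of $Q_1$ lives in a factor-component meeting $Q_1$, contradicting the minimality of $x$) and force $u_1 Q_1 z - z$ to avoid $V(H) \ni x$. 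Hence $zQ_1 u_1 + P + u_2 Q_2 x$ is a bona fide $M$-ear relative to $H$ with $E(Q)\setminus E(G[X]) = E(P)$, and no parity case analysis or iterated splicing is needed. Your instinct to exploit the freedom in choosing $H$ was the right one; the missing idea is to let $H$ be determined by the first factor-component (rather than the first vertex) along $Q_2$ shared with $Q_1$, because factor-components, unlike arbitrary path vertex sets, are closed with respect to every maximum matching.
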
 
\begin{proof} 
Under Lemma~\ref{lem:path2base}, 
for each $i\in \{1,2\}$, 
there is an  $M$-forwarding path $Q_i$ from $u_i$ to some vertex in $V(G_1)$. 
Trace $Q_2$ from $u_2$, and let $x$ be the first encountered vertex in a factor-component $H$ that also has some vertices of $Q_1$; 
such $H$ certainly exists because $G_1$ has shares some verices with both $Q_1$ and $Q_2$. 
Furthermore, trace $Q_1$ from $u$, and let $z$ be the first vertex in $H$. 
Then, $H$ and $zQ_1u + P vQ_2y$ are desired factor-component and  $M$-ear. 
\qed
\end{proof} 

\begin{proposition} 
Let $G$ be a graph and $M$ be a maximum matching of $G$. 
Let $G_1$ and $G_2$ are distinct factor-components with $G_1\yield G_2$. 
If $G_1$ and $G_2$ are immediate, that is, for any $H\in \comp{G}$, $G_1\yield H \yield G_2$  implies $G_1 = H$ or $G_2 = H$, then there is an $M$-ear relative to $G_1$ that traverses $G_2$. 
\end{proposition}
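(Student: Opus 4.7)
The plan is to construct the desired $M$-ear in three stages: first produce an $M$-ear traversing $G_2$ relative to a carefully chosen critical-inducing set, then extract an enlarged $M$-ear via Lemma~\ref{lem:intermediate}, and finally use the immediacy hypothesis to force the base of this enlarged ear to be $G_1$ itself.

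To begin, since $G_1\yield G_2$, I will fix a critical-inducing set $X$ for $G_1$ to $G_2$ and let $Y\subseteq X$ be a maximum critical-inducing set for $G_1$ with $Y\cap V(G_2)=\emptyset$. Such a $Y$ exists because $V(G_1)$ is one such set and, by Lemma~\ref{lem:union}, the family is closed under union. Applying Lemma~\ref{lem:inductive-ear} to $Y\subsetneq X$ yields an $M$-ear $P$ relative to $Y$ with $\earint{P}$ nonempty and contained in $X\setminus Y$; by Lemma~\ref{lem:order2const}, $\earint{P}\subseteq C(G)$. I then claim $P$ must traverse $G_2$: otherwise Lemma~\ref{lem:increment} would extend $Y$ to a critical-inducing set for $G_1$ strictly larger than $Y$ and still disjoint from $V(G_2)$, contradicting maximality.

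Next, I will apply Lemma~\ref{lem:intermediate} to $Y$ and $P$ to obtain a factor-component $H$ with $V(H)\subseteq Y$ and an $M$-ear $Q$ relative to $H$ with $E(Q)\setminus E(G[Y])=E(P)$. Since $V(H)\subseteq Y$ and $Y\cap V(G_2)=\emptyset$, we have $H\neq G_2$ and $V(H)\cap V(G_2)=\emptyset$, so the nonempty set $V(P)\cap V(G_2)$ lies in $\earint{Q}$, showing that $Q$ traverses $G_2$.

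The crucial step is to show $H=G_1$. Supposing $H\neq G_1$, I will inspect the construction inside Lemma~\ref{lem:intermediate} and take the auxiliary $M$-forwarding paths from the ends $u_1,u_2$ of $P$ to $V(G_1)$ via Lemma~\ref{lem:path2base}\ref{item:path2base:pathcut}, so that their internal vertices lie in $Y\setminus V(G_1)\subseteq C(G)$ with only the endpoint in $V(G_1)$. If some $u_i$ lies in $V(G_1)$, the corresponding auxiliary path is trivial and forces the first common factor-component in the construction to be $G_1$, contradicting $H\neq G_1$; hence $u_1,u_2\notin V(G_1)$, the subpaths used to build $Q$ avoid $V(G_1)$ entirely, and combined with $\earint{P}\subseteq C(G)$ this yields $\earint{Q}\subseteq C(G)$. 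Lemma~\ref{lem:nonrefinable} then gives $H\yield G_2$, while $V(H)\subseteq Y\subseteq\vupstar{G_1}$ together with Theorem~\ref{thm:maximumup} gives $G_1\yield H$; the resulting chain $G_1\yield H\yield G_2$ with $H\notin\{G_1,G_2\}$ contradicts immediacy. Thus $H=G_1$ and $Q$ is the required $M$-ear. The main obstacle will be cleanly verifying $\earint{Q}\subseteq C(G)$ from the construction of Lemma~\ref{lem:intermediate}, which requires tracking $V(G_1)$-vertices that may appear on the auxiliary forwarding paths and splitting off the case $u_i\in V(G_1)$ so that it immediately yields $H=G_1$ rather than going through Lemma~\ref{lem:nonrefinable}.
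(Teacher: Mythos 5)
Your proof follows the same route as the paper's: fix a critical-inducing set $X$ for $G_1$ to $G_2$, take a maximal critical-inducing $Y\subseteq X\setminus V(G_2)$, use Lemma~\ref{lem:inductive-ear} and Lemma~\ref{lem:increment} to force the resulting $M$-ear $P$ to traverse $G_2$, pass to a factor-component $H$ and $M$-ear $Q$ via Lemma~\ref{lem:intermediate}, and then pin down $H=G_1$ using Lemma~\ref{lem:nonrefinable} together with immediacy. The one place where you go beyond the paper is in carefully verifying $\earint{Q}\subseteq C(G)$ (the hypothesis of Lemma~\ref{lem:nonrefinable}), which the paper's proof leaves implicit; your case split on whether $u_i\in V(G_1)$ is exactly the right way to close that gap.
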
 
\begin{proof} 
Let $X$ be a  critical-inducing set $X$ for $G_1$ to $G_2$. 
Let $X'$ be a maximal (in fact, the maximum) subset of $X\setminus V(G_2)$ 
that is critical-inducing for $G_1$; 
such  $X'$ certainly exists because $V(G_1)\subseteq X\setminus V(G_2)$  is critical-inducing for $G_1$. 
 Under Lemma~\ref{lem:inductive-ear}, 
 there is an $M$-ear $P$ relative to $X'$ with $V(P)\subseteq X$ and $\earint{P} \neq \emptyset$. 
From Lemma~\ref{lem:increment}, 
 the minimum separating set that contains $X'\cup V(P)$ is a critical-inducing set for $G_1$. 
 Therefore, $P$ traverses $G_2$. 
Furthermore, Lemma~\ref{lem:intermediate} implies that 
we can use this $P$ to obtain an $M$-ear that is relative to a factor-component $H$ with $V(H)\subseteq X'$ 
and  traverses $G_2$. 
Therefore, from Lemma~\ref{lem:nonrefinable}, 
$G_1 \yield H$ and $H \yield G_2$ hold. 
As $H\neq G_2$ holds, we have $H = G_1$. 
This completes the proof of this proposition. 

\qed
\end{proof}

\section{Algorithmic Results} \label{sec:alg}
\subsection{Algorithmic Preliminaries} \label{sec:alg:pre} 
In the remainder of this paper, 
we present algorithms for computing the basilica decomposition. 
Section~\ref{sec:alg:pre} presents some preliminary facts that will be used in the remaining sections. 
We denote by $n$ and $m$ the numbers of vertices and edges of an input graph, respectively. 
Note that 
we can assume $m = \mathrm{\Omega}(n)$ and, accordingly, $O(n+m) = O(m)$ 
if an input graph is connected or factorizable. 
Section~\ref{sec:alg:comp} provides an algorithm for computing the factor-components, 
and then  
Sections~\ref{sec:alg:part} and \ref{sec:alg:order} present how to compute 
the generalized Kotzig-Lov\'asz decomposition  and the basilica order. 
Each costs  $O(nm)$ time, 
using Edmonds' maximum matching algorithm as a subroutine~\cite{edmonds1965}.   

\begin{theorem}[Micali and Vazirani~\cite{mv1980}, Vazirani~\cite{vazirani1994}]\label{thm:matchingalg}
Given a graph, one of its maximum matchings can be computed in $O(\sqrt{n}m)$ time. 
\end{theorem}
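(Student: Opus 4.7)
The statement is the Micali--Vazirani bound, which is attributed rather than proved here, so my ``proof proposal'' is really a plan for how I would reconstruct their argument in a way that fits the style of this paper.

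The high-level plan is to adapt the Hopcroft--Karp phased scheme, which achieves $O(\sqrt{n}m)$ in the bipartite case, to general graphs. Starting from the empty matching $M$, I would repeatedly execute \emph{phases}: in each phase, find the length $\ell$ of a shortest $M$-augmenting path, then find an (inclusion-)maximal collection of vertex-disjoint $M$-augmenting paths of length exactly $\ell$, and augment along all of them simultaneously. First I would prove the two classical phase-counting lemmas, which go through verbatim for general graphs once augmenting paths have been defined with respect to blossoms: (i) the length of a shortest augmenting path is strictly monotone across phases, and (ii) if $M^*$ is a maximum matching and $M$ admits no augmenting path of length less than $\ell$, then $|M^*|-|M|\le n/\ell$. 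Combining these gives that after $O(\sqrt{n})$ phases the current matching is maximum, so it only remains to implement each phase in $O(m)$ time.

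Implementing one phase is where essentially all the work lies, and this is the step I expect to be the main obstacle. In the bipartite case one simply builds the layered graph by BFS from the exposed vertices and then runs a restricted DFS; in general graphs the BFS can discover blossoms (odd closed alternating walks through shared vertices), and these must be contracted or handled implicitly while still keeping a linear-time budget. My plan would be to construct in $O(m)$ time an analogue of the layered graph that records, for each vertex, both its ``even level'' and its ``odd level'' (the parities of shortest alternating walks from the exposed set), and to classify each non-matching edge as a \emph{prop}, a \emph{bridge}, or irrelevant. Bridges are precisely the edges that can close a blossom at the current stage; the combinatorial heart of the argument is that if I process bridges in order of increasing ``tenacity'' using Micali and Vazirani's \emph{double depth-first search} (two DFS tokens walking simultaneously from the two endpoints of a bridge, backtracking whenever they collide at a common base), then each blossom is discovered, its base identified, and its internal alternating paths made available for augmentation, all at an amortized $O(1)$ cost per edge.

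Given that machinery, the phase finishes by launching an augmenting-path DFS from the exposed vertices through the structure built above, removing each vertex and edge as soon as it is used or confirmed useless, which again costs $O(m)$ amortized. The correctness step I would need to verify most carefully is that the set of augmenting paths produced by this DFS is inclusion-maximal among shortest augmenting paths in $G$, not merely in the layered graph; this rests on the lemma that every shortest augmenting path in $G$ must pass through edges that are either props or bridges of minimum tenacity, which in turn relies on the level-and-tenacity invariants established by the double-DFS. Putting the two pieces together gives a total running time of $O(\sqrt{n})\cdot O(m)=O(\sqrt{n}m)$, as claimed; in the present paper this theorem is only invoked as a black box, so I would simply cite \cite{mv1980,vazirani1994} for the detailed implementation.
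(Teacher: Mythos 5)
The paper does not prove this theorem; it is stated purely as a citation of Micali--Vazirani and used later as a black-box subroutine, exactly as you observe in your last sentence. Your high-level account of the algorithm (Hopcroft--Karp phase structure giving $O(\sqrt{n})$ phases, min/max levels, bridges versus props, tenacity ordering, double depth-first search) is a faithful summary of the cited approach, so there is nothing in the paper to compare it against or to flag.
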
 

The following two statements can be   found implicitly in Edmonds's algorithm~\cite{edmonds1965}. 
See also Lov\'asz and Plummer~\cite{lp1986}.

\begin{theorem}[implicitly stated in Edmonds~\cite{edmonds1965}]\label{dacalg} 
Given a graph $G$ and a maximum matching $M$, 
the set $D(G)$, $A(G)$, and $C(G)$ can be computed in $O(n+m)$ time. 
\end{theorem}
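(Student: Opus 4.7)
The plan is to reduce the problem to a single alternating breadth-first search from the set of $M$-exposed vertices. By Lemma~\ref{lem:da2path}\ref{item:da2path:d}, we have $x \in D(G)$ if and only if there is an $M$-forwarding path from some $M$-exposed vertex to $x$. Hence computing $D(G)$ amounts to computing the set of vertices reachable from the exposed vertices via even-length $M$-alternating paths. Once $D(G)$ is in hand, $A(G) = \parNei{G}{D(G)} \setminus D(G)$ can be read off in a single sweep through the incidence lists in $O(n+m)$ time, and $C(G) = V(G) \setminus D(G) \setminus A(G)$ takes an additional $O(n)$ time.

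To compute the reachable set $D(G)$, I would run a standard Edmonds-style alternating search. Initialize a queue with every $M$-exposed vertex, labelled \emph{outer}. Repeatedly dequeue an outer vertex $v$ and examine each incident edge $vw$: if $w$ is unlabelled, then (since $M$ is maximum) $w$ must be matched, so label $w$ \emph{inner} and label its match $w'$ \emph{outer} and enqueue it; if $w$ is already labelled \emph{outer} and sits in a different pseudo-component than $v$, we have discovered a blossom, which we contract along the tree path joining $v$ and $w$ to their common ancestor, turning every vertex of that odd cycle into an outer vertex. Edges from $w$ already labelled \emph{inner} require no action. When the queue empties, no additional vertex can be reached by an $M$-forwarding path (since $M$ is maximum, no augmenting path is produced), so $D(G)$ is exactly the union of all outer vertices, lifting the contractions back to original vertices.

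Correctness follows from the standard Edmonds blossom invariant: every outer vertex admits an $M$-forwarding path from an exposed vertex, and conversely any vertex reachable by such a path becomes outer during the search. Regarding running time, each vertex is labelled at most once outside blossom contractions, each edge is scanned $O(1)$ times for the purpose of growing the search tree, and the total work of blossom contractions across the whole search is linear in $n+m$ provided the tree ancestors are maintained with a union-find style structure (which for a single phase of Edmonds' algorithm admits an amortized $O(n+m)$ implementation). Thus the entire computation is $O(n+m)$.

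The main obstacle, and the only genuinely non-trivial part of the argument, is justifying that a single alternating search with blossom shrinking can in fact be performed in $O(n+m)$ time rather than $O((n+m)\alpha(n))$ or worse; this is the piece that is \emph{implicitly} carried by Edmonds' original description and the textbook treatment in Lov\'asz--Plummer, and for our purposes it suffices to invoke the standard analysis of one phase of Edmonds' algorithm in which no augmentation is performed.
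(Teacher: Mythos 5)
The paper does not supply its own proof of this theorem; it is stated as a known fact that is ``implicitly'' contained in Edmonds' algorithm, with a pointer to Lov\'asz and Plummer. Your sketch is precisely the argument that citation refers to: run one phase of Edmonds' alternating search with blossom shrinking from the $M$-exposed vertices, identify $D(G)$ with the outer vertices (this is Lemma~\ref{lem:da2path}\ref{item:da2path:d} made algorithmic), then read off $A(G)=\parNei{G}{D(G)}$ and $C(G)$ in a linear sweep. So you have taken essentially the same route as the paper, just writing out what it leaves implicit, and the correctness part of your argument is sound.

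The one place you should be a bit more careful is the running-time claim, and you are right to flag it as the nontrivial step. A generic union-find gives $O(m\,\alpha(n))$ for a single phase, not $O(n+m)$; the strictly linear bound relies on the special structure of the unions (they follow a fixed tree of ``shrink'' operations), for which one can invoke Gabow and Tarjan's linear-time disjoint-set-union for static trees, or the Micali--Vazirani single-phase implementation cited as Theorem~\ref{thm:matchingalg}. Since the paper itself simply cites the literature here, it is acceptable for you to do the same, but it would strengthen the argument to name the specific linear-time DSU result rather than gesturing at ``an amortized $O(n+m)$ implementation''. Apart from that footnote, your proposal matches the intended proof.
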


\begin{proposition}[implicitly stated in Edmonds~\cite{edmonds1965}]%
\label{prop:rootblossom} 
Let $G$ be a graph and $M$ be a maximum matching of $G$,  and let $r\in V(G)$ be a vertex exposed by $M$. 
Let $C$ be  the connected component of $G[D(G)]$ that contains $r$. 
\begin{rmenum}
\item Then, for any maximum matching $M'$ of $G$ that exposes $r$, 
$M'_X$ is a near-perfect matching of $C$. 
\item 
Define $\mathcal{X}\subseteq 2^{V(G)}$ as follows: 
$X\subseteq V(G)$ is a member of $\mathcal{X}$ if 
 $r\in X$ holds, 
$G[X]$ is factor-critical, and 
$M_X$ is a near-perfect matching of $G[X]$, exposing $r$. 
Then, the maximum member of $\mathcal{X}$ 
is equal to $V(C)$. 
\item 
Given $G$, $M$, and $r$, 
$C$ can be computed in $O(m)$ time. 
\end{rmenum} 
\end{proposition}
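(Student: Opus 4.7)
My plan is to derive the three parts in order, using Theorem~\ref{thm:gallaiedmonds} (the Gallai--Edmonds structure theorem) and Lemmas~\ref{lem:path2root} and \ref{lem:da2path} for parts (i) and (ii), and Edmonds' alternating-tree construction for part (iii).

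For part~(i), I would just unpack the Gallai--Edmonds structure theorem. By Theorem~\ref{thm:gallaiedmonds}\ref{item:da2path:d}--\ref{item:da2path:a}, each connected component of $G[D(G)]$ is factor-critical, and for every maximum matching $M'$ the restriction $M'_{V(C)}$ is a near-perfect matching of $C$, so it leaves exactly one vertex of $V(C)$ uncovered. Since $M'$ exposes $r$ globally and $r\in V(C)$, that uncovered vertex must be $r$. (The ``$M'_X$'' in the statement is read as $M'_{V(C)} = M'\cap E(C)$.)

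For part~(ii), the inclusion $V(C)\in \mathcal{X}$ is immediate: $C$ is factor-critical, $r\in V(C)$, and part~(i) applied to $M' = M$ says $M_{V(C)}$ is a near-perfect matching of $C$ exposing $r$. For the converse, pick any $X\in\mathcal{X}$ and any $x\in X$. Since $G[X]$ is factor-critical and $M_X$ is a near-perfect matching of $G[X]$ exposing $r$, Lemma~\ref{lem:path2root} yields an $M_X$-forwarding path from $x$ to $r$ inside $G[X]$, which is also an $M$-forwarding path from $x$ to $r$ in $G$. Since $r$ is exposed by $M$, Lemma~\ref{lem:da2path}\ref{item:da2path:d} then gives $x\in D(G)$. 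Hence $X\subseteq D(G)$; because $G[X]$ is connected (being factor-critical) and intersects $V(C)$ at $r$, the whole set $X$ lies in the connected component $C$ of $G[D(G)]$. Thus every member of $\mathcal{X}$ is contained in $V(C)$, so $V(C)$ is its unique maximum element.

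For part~(iii), I would use the Edmonds alternating-tree construction rooted at $r$. Starting from $M$, a single tree-growing phase from the $M$-exposed vertex $r$ explores all vertices reachable from $r$ by $M$-alternating walks, contracting blossoms as it proceeds; because $M$ is already a maximum matching, the phase terminates without finding an augmenting path. After contractions are expanded, the set of outer (``even'') vertices of the resulting tree coincides, by Lemma~\ref{lem:da2path}\ref{item:da2path:d} together with part~(ii), with the set of vertices reachable from $r$ by an $M$-forwarding path, which is exactly $V(C)$. A single such phase can be implemented in $O(m)$ time using a union-find structure for blossoms, as in Edmonds' algorithm, yielding the claimed bound.

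The essential content of (i) and (ii) is routine once Gallai--Edmonds is in hand; the genuine work is in (iii), where the delicate point is justifying carefully that the outer vertices produced by a single blossom-tree growth phase, with all contractions expanded, equal $V(C)$, and that the $O(m)$ bound is achievable with the usual disjoint-set implementation of blossom contractions.
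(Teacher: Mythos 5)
The paper states this proposition without proof, citing it as implicit in Edmonds' algorithm and grouping it with the other algorithmic preliminaries (Theorem~\ref{dacalg}, Proposition~\ref{prop:pathalg}), so there is no in-paper argument to compare against. Your reconstruction is correct: for (i) you read Theorem~\ref{thm:gallaiedmonds}(ii) directly, and you rightly interpret the paper's ``$M'_X$'' as $M'_{V(C)}$; for (ii) the chain is exactly right---Lemma~\ref{lem:path2root} gives an $M$-forwarding path in $G[X]$ (hence in $G$) from any $x\in X$ to the $M$-exposed vertex $r$, Lemma~\ref{lem:da2path}\ref{item:da2path:d} then gives $x\in D(G)$, and connectivity of the factor-critical graph $G[X]$ together with $r\in X\cap V(C)$ forces $X\subseteq V(C)$, while $V(C)\in\mathcal{X}$ follows from (i) and Theorem~\ref{thm:gallaiedmonds}; and (iii) is indeed a standard consequence of one phase of blossom shrinking. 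One small simplification for (iii): staying within the paper's own toolkit, you can compute $D(G)$ via Theorem~\ref{dacalg} in $O(n+m)$ time and then extract the component of $G[D(G)]$ containing $r$ by a single BFS. This sidesteps any appeal to a truly linear-time blossom phase; your union-find sketch as literally stated yields only $O(m\,\alpha(n))$, and the $O(m)$ bound for a blossom phase requires the specialized incremental-tree disjoint-set structure rather than plain union-find.
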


The next statement can be deduced from Edmonds' algorithm. 
See also Carvalho and Cheriyan~\cite{cc2005}. 

\begin{proposition}\label{prop:pathalg}
Let $G$ be a factorizable graph and $M$ be a perfect matching of $G$,
and let $u\in V(G)$.
\begin{rmenum}
\item  The set of vertices that can be reached from $u$ by 
an $M$-saturated path can be computed in $O(m)$ time.
\item All the allowed edges adjacent to $u$ can be computed in $O(m)$ time.
\item All the factor-components of $G$ can be computed in $O(nm)$ time.
\end{rmenum}
\end{proposition}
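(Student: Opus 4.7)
The plan is to handle (i) by reducing to a Gallai-Edmonds computation on a slightly modified graph, and then to bootstrap (ii) and (iii) from (i) via Lemma~\ref{lem:allowed}. For (i), with $uu' \in M$, set $G' := G - u$ and $M' := M \setminus \{uu'\}$; since $|V(G')|$ is odd and $|M'| = |M|-1$, the matching $M'$ is maximum in $G'$ and exposes exactly $u'$. The key claim to establish is that the set $S_u$ of vertices reachable from $u$ by an $M$-saturated path in $G$ equals $D(G')$.

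For the forward direction of the claim, given an $M$-saturated path $u, u', x_1, \ldots, v', v$ in $G$, its restriction $u', x_1, \ldots, v', v$ is an $M'$-alternating path in $G'$ from the exposed vertex $u'$ to $v$ whose last edge $v'v$ lies in $M'$; taking the symmetric difference of $M'$ with this path yields a maximum matching of $G'$ that exposes $v$, so $v \in D(G')$. Conversely, if $v \in D(G')$, a standard symmetric-difference argument produces such an $M'$-alternating path in $G'$ from $u'$ to $v$, and prepending $uu' \in M$ yields an $M$-saturated path from $u$ to $v$ in $G$. The boundary case $v = u'$ is handled by the trivial path $u, u'$. Once this equivalence is in hand, Theorem~\ref{dacalg} applied to $(G', M')$ computes $D(G') = S_u$ in $O(n+m) = O(m)$ time.

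For (ii), Lemma~\ref{lem:allowed} says that $uv \notin M$ is allowed iff an $M$-saturated path joins $u$ and $v$, i.e., iff $v \in S_u$; the matching edge $uu'$ is trivially allowed. Thus, after computing $S_u$ via (i), one pass over the neighborhood of $u$ selects the allowed edges at $u$ in $O(\deg(u)) \subseteq O(m)$ additional time. For (iii), the factor-components are the connected components of the spanning subgraph of allowed edges; running (ii) at each vertex collects all allowed edges in $O(nm)$ time, and a final $O(n+m)$ connected-components search finishes the computation.

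The main technical step is the identification $S_u = D(G')$ in (i); the other parts are direct applications of Theorem~\ref{dacalg} and Lemma~\ref{lem:allowed} together with straightforward bookkeeping on running times.
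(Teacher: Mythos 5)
The paper states this proposition without proof, deferring to Edmonds' algorithm and to Carvalho--Cheriyan~\cite{cc2005}, so there is no in-paper argument to compare against. Your proof is correct and self-contained. The key identity $S_u = D(G-u)$ is the right reduction: with $uu'\in M$, the matching $M\setminus\{uu'\}$ is maximum in $G-u$ (parity forces it), and the two symmetric-difference arguments you give verify both inclusions, with the trivial path $u,u'$ covering the boundary case $v=u'$. Two small points worth noting but not affecting correctness: $u'$ already belongs to $S_u$ via that trivial $M$-saturated path, so the matching edge $uu'$ need not be special-cased in part (ii); and the $O(n+m)=O(m)$ step relies on $G$ being factorizable, hence $m\ge n/2$, which you implicitly use. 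Your derivation of (ii) from Lemma~\ref{lem:allowed} and of (iii) by running (ii) at every vertex and then taking connected components of the allowed subgraph is exactly the intended bootstrapping.
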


\subsection{Computing Factor-components} \label{sec:alg:comp} 
Propositions~\ref{prop:fcomp2dac} and \ref{prop:pathalg} show 
how to compute consistent factor-components, 
wheares Theorem~\ref{thm:inconst2connected} implies an algorithm for computing inconsistent factor-components. 
Hence, we now obtain the following: 
\begin{theorem} \label{thm:compalg}
Given a graph $G$, 
one of its perfect matchings $M$, 
and the sets $D(G)$, $A(G)$, and $C(G)$,   
the factor-components of $G$ are computed in $O(nm)$ time. 
\end{theorem}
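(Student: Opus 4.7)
The plan is to split the computation of factor-components into the consistent and inconsistent ones, since by Proposition~\ref{prop:fcomp2dac} each factor-component lies either entirely inside $C(G)$ or entirely inside $A(G)\cup D(G)$. With $D(G)$, $A(G)$, $C(G)$ and a maximum matching $M$ already in hand, these two halves can be handled by different tools, each within the $O(nm)$ budget.

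For the consistent factor-components, I would first observe that the induced subgraph $G[C(G)]$ is factorizable: indeed, by Lemma~\ref{lem:a2d} no edge of $M$ crosses $\partial(C(G))$, so $M\cap E(G[C(G)])$ is a perfect matching of $G[C(G)]$. Proposition~\ref{prop:fcomp2dac} says the consistent factor-components of $G$ are exactly the factor-components of $G[C(G)]$, so I would feed $G[C(G)]$ together with its perfect matching into Proposition~\ref{prop:pathalg}(iii), obtaining all consistent factor-components in $O(nm)$ time.

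For the inconsistent factor-components, Theorem~\ref{thm:inconst2connected} gives an immediate recipe: they are precisely the connected components of the graph obtained from $G[D(G)\cup A(G)]$ by deleting all edges inside $A(G)$. Building this auxiliary graph and running a standard breadth-first or depth-first search on it costs only $O(n+m)$ time.

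Summing the two phases yields an overall running time of $O(nm)$, proving the theorem. There is essentially no hard step here; the only nontrivial ingredient is the $O(nm)$ algorithm from Proposition~\ref{prop:pathalg}(iii), which is invoked as a black box, and the main thing to check carefully is that its hypotheses (factorizability of $G[C(G)]$ together with an explicit perfect matching) are indeed satisfied — which, as noted above, follows directly from Lemma~\ref{lem:a2d}.
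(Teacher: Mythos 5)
Your proposal matches the paper's proof essentially line for line: both split via Proposition~\ref{prop:fcomp2dac}, compute the consistent factor-components as $\comp{G[C(G)]}$ using Proposition~\ref{prop:pathalg} in $O(nm)$ time, and compute the inconsistent ones from Theorem~\ref{thm:inconst2connected} by a linear-time connected-components search. The extra detail you add (checking via Lemma~\ref{lem:a2d} that $G[C(G)]$ is factorizable with $M\cap E(G[C(G)])$ as a perfect matching, so that the hypotheses of Proposition~\ref{prop:pathalg} are met) is a welcome clarification but does not change the approach.
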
 

\begin{proof} 
Under Proposition~\ref{prop:fcomp2dac}, 
we can compute $\comp{G}$ 
by computing $\comp{G[D(G)\cup A(G)]}$ and $\comp{G[C(G)]}$ individually.  
From Theorem~\ref{thm:inconst2connected}, 
we can compute $\comp{G[D(G)\cup A(G)]}$ in $O(n + m)$ time. 
From Proposition~\ref{prop:pathalg}, 
we can compute $\comp{G[C(G)]}$ in $O(nm)$ time. 
Therefore, we can obtain $\comp{G}$ in $O(nm)$ time. 
\qed
\end{proof}

\subsection{Computing the Generalized Kotzig-Lov\'asz Decomposition}  \label{sec:alg:part}
From Observation~\ref{note:inconstpart}  
and Proposition~\ref{prop:pathalg}, 
we can    compute the generalized Kotzig-Lov\'asz decomposition.  
\begin{theorem} \label{thm:partalg}
Given a graph $G$, one of its maximum matchings $M$, the set of factor-components $\comp{G}$, 
and the sets $D(G)$, $A(G)$, and $C(G)$, 
the generalized Kotzig-Lov\'asz decomposition of $G$ can be computed in $O(nm)$ time. 
\end{theorem}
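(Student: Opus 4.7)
The plan is to split the computation of $\gpart{G}$ along the factor-components, handling consistent and inconsistent ones differently. For each $H \in \inconst{G}$, Observation~\ref{note:inconstpart} already gives $\pargpart{G}{H}$ explicitly as $\{A(G) \cap V(H)\}$ together with singletons $\{x\}$ for each $x \in D(G) \cap V(H)$, so once the Gallai-Edmonds family and the factor-components are given as input, this part is read off in $O(|V(H)|)$ time, summing to $O(n)$ across all inconsistent factor-components.

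The main work is in the consistent factor-components, and the key reduction is this: every $M$-saturated path consists entirely of $M$-covered vertices, and so cannot visit any vertex exposed by $M$. Hence, letting $U$ denote the set of $M$-exposed vertices, the $M$-saturated paths of $G$ coincide with those of the subgraph $G' := G - U$; moreover $M$ is a perfect matching of $G'$, so $G'$ is factorizable. By Lemma~\ref{lem:def2saturated}, for two distinct vertices $u, v$ in a common consistent factor-component $H \in \const{G}$ (so $u, v \in C(G) \subseteq V(G) \setminus D(G)$), we have $u \not\gsim{G} v$ if and only if there exists an $M$-saturated path between $u$ and $v$ in $G$, equivalently in $G'$. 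Therefore computing $\pargpart{G}{H}$ for consistent $H$ reduces to computing $M$-saturated reachability in the factorizable graph $G'$.

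With this reduction in hand, I would invoke Proposition~\ref{prop:pathalg}\,(i) on $G'$: for each vertex $u$ belonging to a consistent factor-component, the set $R_u$ of vertices reachable from $u$ by $M$-saturated paths in $G'$ can be obtained in $O(m)$ time. The equivalence class of $u$ under $\gsim{G}$ inside its factor-component $H$ is then $\{u\} \cup (V(H) \setminus R_u)$; assembling the full partition of $V(H)$ by processing one representative per class (using the already-computed $R_u$) is immediate. Summed over vertices in consistent factor-components this yields $O(nm)$, dominating the $O(n)$ cost of the inconsistent part.

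The only delicate point is the identification of $M$-saturated paths in $G$ with those in $G'$, which unlocks the direct application of Proposition~\ref{prop:pathalg} in the factorizable setting where it is stated. The remaining steps are routine bookkeeping on top of Observation~\ref{note:inconstpart} and Proposition~\ref{prop:pathalg}\,(i).
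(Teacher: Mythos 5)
Your proposal follows essentially the same approach as the paper's proof: read off $\pargpart{G}{H}$ for inconsistent $H$ via Observation~\ref{note:inconstpart} in $O(n)$ total, then for consistent $H$ compute $M$-saturated reachability sets via Proposition~\ref{prop:pathalg}\,(i) and translate them into equivalence classes under $\gsim{G}$ using Lemma~\ref{lem:def2saturated}, for $O(nm)$ overall. You add one small but genuine clarification that the paper glosses over: Proposition~\ref{prop:pathalg} is stated for a factorizable graph with a perfect matching, whereas the given $G$ need not be factorizable, so you first pass to the factorizable subgraph $G' := G - U$ (deleting the $M$-exposed vertices $U$), observing that every $M$-saturated path of $G$ lies in $G'$ and that $M$ restricts to a perfect matching of $G'$. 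This makes the invocation of Proposition~\ref{prop:pathalg} literal rather than implicit; otherwise the argument and the bookkeeping are the same as in the paper.
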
 

\begin{proof}
We compute $\pargpart{G}{H}$ for each $H\in\comp{G}$. 
According to Observation~\ref{note:inconstpart}, 
if $H$ is inconsistent 
then $\pargpart{G}{H} = \{ V(H)\cap A(G) \} \cup \bigcup \{ \{x\} : x\in V(H)\setminus A(G)\}$. 
Therefore, $\pargpart{G}{H}$ for all $H\in\inconst{G}$ can be computed 
in $O(n)$ time in total. 

If $H$ is consistent,   
we can compute $\pargpart{G}{H}$ 
 in a similar way as  the Kotzig-Lov\'asz decomposition of 
a consistently factor-connected  graph~\cite{cc2005}.
That is, 
for each $v\in V(H)$,
compute the set of vertices $U$ that can be reached from $v$ by an 
$M$-saturated path,
and recognize $V(H)\setminus U$ as a member of $\pargpart{G}{H}$.
Each $U\in\pargpart{G}{H}$ can be computed in $O(m)$ time 
according to Proposition~\ref{prop:pathalg}. 
Therefore,  computing $\pargpart{G}{H}$ for all $H\in\const{G}$ 
costs $O(nm)$ time. 
Thus, the proof is completed. 
\qed
\end{proof}

\subsection{Computing the Basilica Order}  \label{sec:alg:order} 
In this section, 
we present an algorithm for computing the basilica order in $O(nm)$ time. 
We  determine the poset by computing $\vup{H}$ 
for each factor-component $H$. 
The following lemmas are provided 
to associate $\vup{H}$ with Proposition~\ref{prop:rootblossom}. 
 Lemmas~\ref{lem:nopath2comp} and \ref{lem:preserve} 
are used to prove Lemma~\ref{lem:vup2dcomp}. 
\begin{lemma} \label{lem:nopath2comp} 
Let $G$ be a graph and $M$ be a maximum matching of $G$. 
Let $H\in\comp{G}$. 
For no $x\in V(G)\setminus V(H)$ exposed by $M$ and for no  $y\in V(H)$, 
there exists an  $M$-exposed path from $x$ to $y$. 
\end{lemma}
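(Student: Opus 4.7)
The plan is proof by contradiction: suppose such an $M$-exposed path $P$ from $x$ to $y$ exists. Since $P$ is $M$-exposed it has odd length $2k{+}1$ and its edges alternate between $E(P)\setminus M$ (the first, third, \ldots, last) and $E(P)\cap M$. I would split into two cases based on whether the far endpoint $y$ is covered by $M$.

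If $y$ is also exposed by $M$, then $P$ is an $M$-augmenting path and $M\triangle E(P)$ is a matching of size $|M|+1$, contradicting the maximality of $M$. This case is immediate.

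If $y$ is covered by $M$, let $yz\in M$. A short parity check first rules out $z\in V(P)$: $z$ cannot be the exposed vertex $x$, cannot be any interior vertex of $P$ (whose unique $M$-edge is already a path edge, so forcing $yz$ to coincide with $e_{2k+1}\notin M$), and is not $y$. The central step is then to construct a second maximum matching
\[ M' := \bigl(M\setminus ((E(P)\cap M)\cup\{yz\})\bigr)\cup (E(P)\setminus M), \]
and verify vertex-by-vertex that it is indeed a matching of the same size as $M$: each interior vertex of $P$ has its $M$-partner swapped along $P$, the edges $e_1$ and $e_{2k+1}$ now cover $x$ and $y$, $z$ becomes exposed, and vertices outside $V(P)\cup\{z\}$ are untouched. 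The cardinalities balance because $|E(P)\setminus M|=|E(P)\cap M|+1$, and the extra $+1$ is absorbed by dropping $yz$. With $M'$ in hand, every edge of $P$ lies in some maximum matching (the even-indexed ones in $M$, the odd-indexed ones in $M'$), so every edge of $P$ is allowed. Consequently the vertices of $P$ all lie in a common factor-component, forcing $x$ into the factor-component of $y$, which contradicts $x\notin V(H)\ni y$.

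The only real subtlety is the definition of $M'$: the naive symmetric difference $M\triangle E(P)$ fails to be a matching, because $y$ would be covered twice, once by the retained $M$-edge $yz$ and once by the newly added $e_{2k+1}$. Additionally discarding $yz$ exactly cancels this excess and preserves the size $|M|$, so that $M'$ is maximum; after that, exhibiting $P$ as a union of allowed edges and collapsing into one factor-component is routine.
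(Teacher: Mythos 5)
Your proof is correct and takes essentially the same route as the paper: after handling the augmenting-path case, you in effect append the matched edge $yz$ to $P$ to obtain an $M$-forwarding path ending at the $M$-exposed vertex $x$, and your $M'$ is precisely the symmetric difference $M\triangle E(P+yz)$ that the paper invokes (via Lemma~\ref{lem:forwarding2allowed}) to conclude every edge of $P$ is allowed. The paper cites that lemma rather than re-deriving the matching swap, and tacitly assumes $z\notin V(P)$ in calling $P+yz$ a path, whereas you check that detail explicitly; otherwise the two arguments coincide.
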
 

\begin{proof} 
Suppose this lemma fails, and let $P$ be an $M$-exposed path from $x\in V(G)\setminus V(H)$ to $y\in V(H)$. 
Then, $y$ is covered by $M$, 
because, otherwise $M\triangle E(P)$ would be a bigger matching of $G$ than $M$.  
Hence, there is a vertex $y'\in V(H)$ to which $y$ is matched by $M$. 
Then, $P+yy'$ is an $M$-forwarding path from $y'$ to $x$, 
and therefore, from Lemma~\ref{lem:forwarding2allowed}, 
$y$ and $x$ are contained in the same factor-component, which is a contradiction. 
\qed
\end{proof}

\begin{lemma} \label{lem:preserve} 
Let $G$ be a graph and $M$ be a maximum matching of $G$. 
Let $H\in\comp{G}$. 
Then, $M_{V(G)\setminus V(H)}$ is a maximum matching of $G/H$. 
\end{lemma}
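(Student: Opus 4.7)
The plan is to prove the stated equality by bounding $\nu(G/H)$ from below and above, verifying both bounds equal $|M_{V(G)\setminus V(H)}|$. The lower bound $\nu(G/H) \geq |M_{V(G)\setminus V(H)}|$ is immediate, since $M_{V(G)\setminus V(H)}$ consists only of edges within $G[V(G)\setminus V(H)]$ and is therefore automatically a matching of $G/H$. I would first compute its size: since $M$ decomposes as a disjoint union of $M_{H'}$ over $H' \in \comp{G}$ with each $M_{H'}$ a maximum matching of $H'$, and since $M$ contains no edge of $\delta(V(H))$ (such an edge would be non-allowed), we obtain $|M_{V(G)\setminus V(H)}| = |M| - |M_H| = \nu(G) - \nu(H)$.

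The substantive step is to show $\nu(G/H) \leq \nu(G) - \nu(H)$. I would take an arbitrary matching $N^*$ of $G/H$ and ``lift'' it to a matching $N^{**}$ of $G$ of the same size: edges of $N^*$ not touching the contracted vertex $h$ are already edges of $G$, while an edge $hv \in N^*$ (if any) corresponds to some underlying edge $uv \in E(G)$ with $u \in V(H)$, $v \notin V(H)$. If $N^*$ does not cover $h$, then $N^{**}$ uses no vertex of $V(H)$, so $N^{**} \cup M_H$ is a matching of $G$ of size $|N^*| + |M_H|$, which is at most $\nu(G)$, giving the desired bound immediately.

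The central case is when $N^*$ covers $h$. I would then consider $N^{**} \cup M'_H$, where $M'_H$ is a maximum matching of $H - u$; this is a matching of $G$ of size $|N^*| + \nu(H-u)$, since $M'_H$ avoids $u$ and lies entirely in $E(H)$ while $v \notin V(H)$. The crucial observation --- and this is what avoids an off-by-one error that a naive argument would incur --- is that the edge $uv$ is non-allowed, because its endpoints lie in distinct factor-components of $G$, and the factor-components are precisely the connected components of the subgraph spanned by allowed edges. Since every edge of a maximum matching of $G$ is allowed, $N^{**} \cup M'_H$ cannot be a maximum matching of $G$; hence $|N^*| + \nu(H-u) \leq \nu(G) - 1$. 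Combining this with the elementary fact $\nu(H-u) \geq \nu(H) - 1$ yields $|N^*| \leq \nu(G) - \nu(H)$, as required.

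The main step that requires real care is precisely the non-allowability of $uv$ above, which is what produces the strict inequality that closes the one-off gap; the proof would otherwise only give $|N^*| \leq \nu(G) - \nu(H) + 1$, which is too weak. Beyond this point, the argument is routine bookkeeping: checking that $N^{**}$ is well-defined as a matching of $G$ and that $N^{**} \cup M'_H$ is indeed a matching (disjoint since $M'_H$ avoids $u$ and $v \notin V(H)$), together with the standard fact that deleting a single vertex decreases the matching number by at most one.
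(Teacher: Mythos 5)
Your proof is correct, but it takes a genuinely different route from the paper. The paper's proof is a two-line reduction to Lemma~\ref{lem:nopath2comp}: it invokes the augmenting-path characterization of maximum matchings --- if $M_{V(G)\setminus V(H)}$ were not maximum in $G/H$, there would be an $M$-exposed augmenting path, and by maximality of $M$ in $G$ one end of it must be the contracted vertex $h$; translating back to $G$ produces an $M$-exposed path from an $M$-exposed vertex outside $V(H)$ to a vertex of $V(H)$, which Lemma~\ref{lem:nopath2comp} forbids. Your argument instead bounds $\nu(G/H)$ directly by lifting an arbitrary matching $N^*$ of $G/H$ to $G$, padding it with a maximum matching of $H$ (or of $H-u$ in the case where $h$ is covered), and observing that the padded matching contains a non-allowed edge and hence cannot be maximum, which supplies the strict inequality that closes the off-by-one gap. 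Both arguments ultimately rest on the same structural fact --- that $\delta(V(H))$ carries no allowed edges, since $H$ is a connected component of the allowed subgraph --- but yours uses it head-on in a counting argument, whereas the paper routes it through Lemmas~\ref{lem:forwarding2allowed} and \ref{lem:nopath2comp}. The trade-off: your proof is more self-contained (it avoids the augmenting-path characterization and the helper lemmas, needing only that no maximum matching uses a non-allowed edge and the one-vertex-deletion bound $\nu(H-u)\ge\nu(H)-1$), while the paper's is shorter given the lemmas it already has in hand. One small point worth making explicit in your write-up: after establishing $\nu(G/H) \le \nu(G)-\nu(H) = |M_{V(G)\setminus V(H)}|$, you should note that $M_{V(G)\setminus V(H)}$ is indeed a matching of $G/H$ (immediate, since it lies inside $G[V(G)\setminus V(H)]$), so the two bounds together force it to be maximum; you do say this, but it is the logical hinge and deserves the emphasis.
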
 

\begin{proof} 
Suppose  this lemma fails, that is,   $M_{V(G)\setminus V(H)}$ is not a maximum matching of $G/H$.
Then, $G/H$ has an $M$-exposed path $P$ in which  one end is the contracted vertex $h$ that corresponds to $H$
and the other end is a vertex exposed by $M$.   
In $G$, $P$ forms an $M$-exposed path between a vertex not in $V(H)$ to a vertex of $H$. 
This contradicts Lemma~\ref{lem:nopath2comp}. 
\qed
\end{proof}

The next lemma associates the set of strict upper bounds 
with the special subgraph $C$ depicted in Proposition~\ref{prop:rootblossom}. 

\begin{lemma}\label{lem:vup2dcomp}  
Let $G$ be a  graph and $M$ be a maximum matching of $G$, 
and let $G_0\in\mathcal{G}(G)$. 
Let $G' := G/G_0$, and let $g_0$ be the contracted vertex that corresponds to $G_0$. 
Then, there exists a connected component $C$ of $G'[D(G')]$ with $g_0\in V(C)$ 
such that $V(C)\setminus \{g_0\}$ is equal to $\vparup{G}{G_0}$. 
\end{lemma}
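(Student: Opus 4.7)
The plan is to prove the two inclusions $\{g_0\}\cup\vup{G_0}\subseteq V(C)$ and $V(C)\setminus\{g_0\}\subseteq\vup{G_0}$ separately, taking $C$ to be the connected component of $G'[D(G')]$ containing $g_0$. To begin, Lemma~\ref{lem:preserve} yields that $M' := M_{V(G)\setminus V(G_0)}$ is a maximum matching of $G'$; since $M'$ uses no edge incident to the contracted vertex, it exposes $g_0$, so $g_0\in D(G')$ and $C$ is well defined. Moreover, Proposition~\ref{prop:rootblossom}(ii), applied to $G'$ with root $g_0$ and matching $M'$, characterizes $V(C)$ as the (inclusion-wise) maximum $X\subseteq V(G')$ with $g_0\in X$ such that $G'[X]$ is factor-critical and $M'_X$ is a near-perfect matching of $G'[X]$ exposing $g_0$.

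For the first inclusion, I would set $X_0 := \{g_0\}\cup\vup{G_0}$ and verify that $X_0$ satisfies those three conditions. Under the natural identification of contracted vertices, $G'[X_0]$ coincides with $G[\vupstar{G_0}]/G_0$, which is factor-critical because Theorem~\ref{thm:maximumup} asserts that $\vupstar{G_0}$ is a critical-inducing set for $G_0$. By Lemma~\ref{lem:order2const}, every factor-component in $\up{G_0}$ is consistent, so the restriction of $M$ to $\vup{G_0}$ is a perfect matching of $G[\vup{G_0}]$; this restriction coincides with $M'_{X_0}$ because $M'$ contains no edge incident to $g_0$, and so it is a near-perfect matching of $G'[X_0]$ exposing $g_0$. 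Proposition~\ref{prop:rootblossom}(ii) then yields $X_0\subseteq V(C)$.

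For the reverse inclusion, my strategy is to show that $Y := V(G_0)\cup(V(C)\setminus\{g_0\})$ is itself a critical-inducing set for $G_0$; Theorem~\ref{thm:maximumup} will then force $Y\subseteq\vupstar{G_0}$ and hence $V(C)\setminus\{g_0\}\subseteq\vup{G_0}$. The factor-critical half is essentially free: $G[Y]/G_0 = G'[V(C)]$ is a connected component of $G'[D(G')]$, hence factor-critical by Theorem~\ref{thm:gallaiedmonds}(i) applied to $G'$. The main obstacle is the separating condition, namely that for every maximum matching $N$ of $G$, no edge of $N$ crosses $\delta(Y)$.

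For edges meeting $V(G_0)$ this follows immediately because $V(G_0)$ is itself a factor-component. The crucial step concerns edges leaving $V(C)\setminus\{g_0\}$: I would apply Lemma~\ref{lem:preserve} to $N$ to obtain a maximum matching $N|_{G'} := N_{V(G)\setminus V(G_0)}$ of $G'$ that exposes $g_0$, and then invoke Proposition~\ref{prop:rootblossom}(i) on $G'$ with matching $N|_{G'}$ and root $g_0$, which guarantees that $(N|_{G'})_{V(C)}$ is a near-perfect matching of $C$ whose only exposed vertex is $g_0$. Consequently every vertex of $V(C)\setminus\{g_0\}$ is matched by $N$ to another vertex of $V(C)\setminus\{g_0\}$, so no $N$-edge can leave $V(C)\setminus\{g_0\}$. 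This simultaneous control over every maximum matching of $G$ is precisely what sidesteps the circular reasoning one would otherwise face in trying to show directly that $V(C)\setminus\{g_0\}$ is a union of factor-components of $G$.
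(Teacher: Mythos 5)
Your proof is correct and follows essentially the same route as the paper's: both hinge on Lemma~\ref{lem:preserve} (to transfer maximum matchings to $G'$), Proposition~\ref{prop:rootblossom} (to characterize $V(C)$ via factor-criticality and closedness), and Theorem~\ref{thm:maximumup} (to identify $\vparupstar{G}{G_0}$ as the largest critical-inducing set). The only cosmetic difference is that the paper packages the correspondence between critical-inducing subsets of $G$ and factor-critical subsets of $G'$ rooted at $g_0$ into an explicit bijection between two families $\mathcal{X}$ and $\mathcal{X}'$, whereas you carry out the two inclusions directly; your more detailed treatment of the separating condition via Proposition~\ref{prop:rootblossom}(i) applied to an arbitrary maximum matching $N$ just makes explicit what the paper's terser wording leaves implicit.
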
 

\begin{proof} 
Let $M' := M \setminus E(G_0)$. 
Define $\mathcal{X}\subseteq 2^{V(G)}$ as follows:  
$X\subseteq V(G)$ is a member of $\mathcal{X}$ if $V(G_0)\subseteq X$ holds,  
$G[X]/G_0$ is factor-critical, and 
$M_X$ is a perfect matching of $G[X]$. 
Additionally, 
define $\mathcal{X}'\subseteq 2^{V(G')}$ as follows:  
$X'\subseteq V(G')$ is a member of $\mathcal{X}'$ if 
$g_0\in X'$ holds, $G'[X']$ is factor-critical, and $M'_{X'}$ is a near-perfect matching of $G'[X']$, 
exposing $g_0$. 
It is easy to see that 
 for $X\subseteq V(G)$ and $X'\subseteq V(G')$ with $X \setminus V(G_0) = X'\setminus \{g_0\}$, 
$X\in \mathcal{X}$ holds if and only if $X'\in \mathcal{X}'$ holds. 

According to Lemma~\ref{lem:preserve}, $M'$ is a maximum matching of $G'$, which exposes $g_0$. 
Hence, from Proposition~\ref{prop:rootblossom}, 
 there exists a connected component $C$ of $G'[D(G')]$ with $g_0\in V(C)$, 
 and $V(C)$ is equal to the maximum member of $\mathcal{X}'$. 
 Accordingly, 
$\mathcal{X}$ has the maximum member $X_0$, with  
 $X_0\setminus V(G_0) = V(C)\setminus \{g_0\}$.

In the following, we prove  $X_0  = \vparupstar{G}{G_0}$. 
From Proposition~\ref{prop:rootblossom}, with respect to any maximum matching of $G'$ that exposes $g_0$, 
$V(C)$  is closed. This implies that $X_0$ is a separating set of $G$. 
Accordingly, $X_0$ is a critical-inducing set for $G_0$, 
and therefore, from Theorem~\ref{thm:maximumup}, $X_0\subseteq \vparupstar{G}{G_0}$ holds. 
By contrast,  $X_0\supseteq \vparupstar{G}{G_0}$ holds, because 
$\vparupstar{G}{G_0} \in \mathcal{X}$ holds.  
Hence, we obtain $X_0  = \vparupstar{G}{G_0}$, 
and therefore, $\vparup{G}{G_0} = V(C)\setminus \{g_0\}$.  
\qed
\end{proof}

The next statement immediately follows from Proposition~\ref{prop:rootblossom} and Lemma~\ref{lem:vup2dcomp}. 
\begin{lemma}\label{lem:vupalg} 
Given a graph $G$, a maximum matching $M$  of $G$, 
and $H\in\comp{G}$, $\vup{H}$ can be computed in $O(m)$ time. 
\end{lemma}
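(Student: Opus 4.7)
The plan is to assemble the algorithm by chaining together the results already available in the excerpt. Given $G$, $M$, and $H \in \comp{G}$, I would first build the contracted graph $G' := G/H$, letting $g_0$ denote the vertex that replaces $V(H)$. This construction traverses $E(G)$ once, so it costs $O(n+m) = O(m)$ time. I would then form $M' := M \setminus E(H)$, which requires only a single linear sweep through $M$.

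Next, I would invoke Lemma~\ref{lem:preserve} to conclude that $M'$ is a maximum matching of $G'$, and observe that $g_0$ is exposed by $M'$ (since the vertices of $H$ were removed from $M$). This puts us in exactly the situation of Proposition~\ref{prop:rootblossom}: we have a maximum matching exposing a designated vertex $g_0$, and part~(iii) of that proposition yields the connected component $C$ of $G'[D(G')]$ containing $g_0$ in $O(m)$ time. Finally, Lemma~\ref{lem:vup2dcomp} identifies $\vup{H}$ as $V(C) \setminus \{g_0\}$, so outputting this set completes the computation.

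Summing the costs gives $O(m)$ overall. The only subtlety — and what I would highlight as the main point to verify carefully — is that the contraction $G/H$ may introduce parallel edges, so one must ensure the bound $|E(G')| \le |E(G)|$ is used rather than accidentally treating $G'$ as a simple graph. Since Proposition~\ref{prop:rootblossom}(iii) is stated in terms of the input graph's edge count, and since the contraction preserves $|V(G')| \le |V(G)|$ and $|E(G')| \le |E(G)|$, the $O(m)$ bound transfers without modification, making this step routine rather than an actual obstacle.
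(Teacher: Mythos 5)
Your proposal is correct and follows essentially the same route the paper takes: the paper states that the lemma ``immediately follows from Proposition~\ref{prop:rootblossom} and Lemma~\ref{lem:vup2dcomp},'' and your argument simply spells out the intermediate steps (constructing $G/H$, forming $M'$, invoking Lemma~\ref{lem:preserve}, and applying Proposition~\ref{prop:rootblossom}(iii)) that the paper leaves implicit. The observation about parallel edges is a reasonable point to note but, as you say, does not affect the $O(m)$ bound.
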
 

The next theorem shows how to compute the poset of the basilica order 
using Lemma~\ref{lem:vupalg}. 

\begin{theorem}\label{thm:orderalg}
Given a  graph $G$, one of its maximum matchings $M$, and $\mathcal{G}(G)$,
we can compute the poset $(\mathcal{G}(G), \yield)$ in $O(nm)$ time.
\end{theorem}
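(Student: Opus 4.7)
The plan is to exploit Lemma~\ref{lem:vupalg} and iterate it over all factor-components. Recall that Lemma~\ref{lem:vupalg} gives an $O(m)$ procedure which, for a single $H \in \comp{G}$, returns $\vup{H}$. Since $|\comp{G}| \le n$, running this procedure once for each factor-component costs $O(nm)$ time in total, which already matches the target bound.

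The remaining work is to reconstruct the binary relation $\yield$ from the family $\{\vup{H} : H \in \comp{G}\}$. This is straightforward: by Theorem~\ref{thm:maximumup}, for distinct $H_1, H_2 \in \comp{G}$ we have $H_1 \yield H_2$ if and only if $V(H_2) \subseteq \vup{H_1}$, and reflexivity handles the case $H_1 = H_2$. So the first step is to precompute, in $O(n)$ time, a map assigning to each vertex $v \in V(G)$ the unique factor-component of $\comp{G}$ containing $v$; this is possible from the given input $\comp{G}$. Next, for each $H$, traverse $\vup{H}$ once, marking each encountered factor-component as a strict upper bound of $H$; this identifies $\parup{G}{H}$ from $\vup{H}$ in time $O(|\vup{H}|) = O(n)$. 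Doing this for all $H$ costs $O(n^2)$, and since the graph may be assumed connected (otherwise compute component by component) we have $O(n^2) = O(nm)$.

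Putting the two phases together, the total running time is $O(nm)$ for all the $\vup{H}$ computations plus $O(nm)$ for the identification of upper bounds, which is $O(nm)$ as claimed. The output is the cover relation or, equivalently, the list of pairs $(H, H')$ with $H \yield H'$, from which the Hasse diagram of $(\comp{G}, \yield)$ can be obtained by standard transitive reduction within the same asymptotic budget.

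The main obstacle is essentially bookkeeping rather than matching-theoretic: one has to be careful that the preprocessing which labels each vertex with its factor-component is done exactly once (so that the per-$H$ identification step is truly $O(n)$ and not $O(n+m)$). All the nontrivial matching-theoretic content has already been encapsulated in Theorem~\ref{thm:maximumup} (which legitimizes reading the order off from the vertex sets $\vup{H}$) and Lemma~\ref{lem:vupalg} (which provides the per-component computation), so the proof is essentially a short complexity accounting built on top of those two results.
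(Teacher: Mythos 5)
Your proof is correct and follows essentially the same route as the paper: invoke Lemma~\ref{lem:vupalg} to compute $\vup{H}$ in $O(m)$ time per factor-component, then identify the strict upper bounds of $H$ by looking up the factor-component of each vertex in $\vup{H}$, giving $O(nm)$ in total. (One small caveat: the closing remark that the Hasse diagram follows ``by standard transitive reduction within the same asymptotic budget'' is not obviously true and is not needed, since the theorem only asks for the poset, i.e.\ the relation $\yield$ itself, which the paper outputs directly.)
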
 
\begin{proof} 
It is sufficient 
to list all the strict upper bounds for each factor-component of $G$
by the following procedure.  
\begin{algorithmic}[1]
\STATE Initialize $f: \mathcal{G}(G) \rightarrow 2^{\mathcal{G}(G)}$ 
by $f(H):= \emptyset$ for each $H\in\mathcal{G}(G)$; 
\FORALL{$H\in\mathcal{G}(G)$}
\STATE compute $\vup{H}$ according to Lemma~\ref{lem:vupalg}; 
\FORALL{$x\in \vup{H}$}
\STATE let $I\in\mathcal{G}(G)$ be such that $x\in V(I)$; 
\STATE $f(H) := f(H) \cup \{I\}$.  
\ENDFOR
\ENDFOR
\end{algorithmic}
The correctness of the algorithm is obvious.   
For each $H\in \mathcal{G}(G)$, the above procedure costs $O(m)$ time; 
therefore, the entire computation costs $O(nm)$ time. 
\qed
\end{proof}

\subsection{Concluding  Algorithms} \label{sec:alg:conclusion} 
From Theorems~\ref{thm:matchingalg}, \ref{dacalg},  and \ref{thm:compalg}, 
a maximum matching, the Gallai-Edmonds family, 
and the set of factor-components can be computed in $O(nm)$ time in total. 
Therefore, from Theorems~\ref{thm:partalg} and \ref{thm:orderalg}, 
we obtain an $O(nm)$ time algorithm for computing the basilica decomposition. 
\begin{theorem}
Given a graph $G$, 
the basilica order $\yield$ over $\comp{G}$ 
and the generalized Kotzig-Lov\'asz decomposition can be computed in 
$O(nm)$ time.
\end{theorem}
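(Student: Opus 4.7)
The plan is to simply orchestrate the subroutines already established in the preceding subsections and add up the costs. Given an input graph $G$ with $n$ vertices and $m$ edges, I would first invoke Theorem~\ref{thm:matchingalg} to compute a maximum matching $M$ in $O(\sqrt{n}m)$ time, which is absorbed into $O(nm)$. Using $G$ and $M$, I would then apply Theorem~\ref{dacalg} to extract the Gallai-Edmonds family $D(G)$, $A(G)$, $C(G)$ in $O(n+m)$ time. With $M$ and the Gallai-Edmonds family in hand, Theorem~\ref{thm:compalg} produces the set of factor-components $\comp{G}$ in $O(nm)$ time.

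Once $M$, $D(G)$, $A(G)$, $C(G)$, and $\comp{G}$ are all available, the two halves of the basilica decomposition can be computed in parallel (or sequentially). Theorem~\ref{thm:partalg} yields the generalized Kotzig-Lov\'asz decomposition $\gpart{G}$ in $O(nm)$ time, while Theorem~\ref{thm:orderalg} yields the poset $(\comp{G}, \yield)$ in $O(nm)$ time.

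Summing the running times of these five stages gives a total of $O(nm)$, which is the claimed bound. Since each step either has already been proved correct in the cited result or is a straightforward invocation of one, there is no substantive obstacle to overcome here; the only thing to verify is that the input assumptions of each subroutine are met by the outputs of the previous stages, which follows directly from their statements. We may assume throughout that $G$ is connected (otherwise we process each connected component separately), so $m = \Omega(n)$ and $O(n+m) = O(m)$, and the additive cost of the maximum matching and Gallai-Edmonds computations is dominated by the $O(nm)$ terms.
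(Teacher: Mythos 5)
Your proof is correct and follows exactly the paper's own argument: chain Theorems~\ref{thm:matchingalg}, \ref{dacalg}, \ref{thm:compalg}, \ref{thm:partalg}, and \ref{thm:orderalg} and sum the costs to $O(nm)$. No meaningful differences.
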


\section{Conclusion} \label{sec:conclusion} 
We have introduced a new canonical decomposition, the {\em basilica decomposition}. 
The central results that support our new theory 
are the {\em basilica order}, a canonical partial order over the set of factor-components 
(Theorem~\ref{thm:order}),  
the generalized Kotzig-Lov\'asz decomposition (Theorem~\ref{thm:generalizedcanonicalpartition}), 
and the  structure described by a relationship between  these two, 
which unites  them into a  canonical decomposition (Theorem~\ref{thm:base}). 
We have also presented an $O(nm)$ time algorithm for computing the basilica decomposition. 
As canonical decompositions have formed  the theoretical foundation of matching theory, 
we believe that the results in this paper 
will be beneficial  to this field, and, by extension, to the entire field of combinatorics. 
We have already obtained some important results using the ideas in this paper. 
\begin{itemize} 
\item 
The structure of {\em barriers}, which are a classically important notion that corresponds to the dual of maximum matchings, has been revealed~\cite{DBLP:conf/cocoa/Kita13, kita2012canonical}. 
\item 
A new proof of Lov\'asz's {\em cathedral theorem}, which is an inductive characterization of  {\em saturated graphs}, 
has been obtained~\cite{kita2014alternative}. 
\item 
A purely graph theoretic proof of the celebrated {\em tight cut lemma}, 
which has  contributed to  almost all the results about the perfect matching polytope 
since 1982, has been obtained~\cite{kita2015graph}.  
\end{itemize}

\begin{acnt}
An early version of this work was presented in the papers~\cite{kita2012partially, DBLP:conf/isaac/Kita12}.
 The author would like to express gratitude to Richard Hoshino and Yusuke Kobayashi for carefully reading an early version of this paper  and for giving useful comments on the writing and presentation.  
\end{acnt}

\bibliographystyle{../splncs03.bst}
\bibliography{../isaac2012_full.bib,../partiallyams.bib,../nanao_kita.bib,../nanaokita.bib,../algebraic.bib}

\end{document}